\definecolor{green}{RGB}{0,150,0}
\newcommand{\N}{\mathbb N}
\newcommand{\Z}{\mathbb Z} 
\newcommand{\R}{\mathbb R}
\newcommand{\Ac}{\mathcal{A}}
\newcommand{\Bc}{\mathcal{B}}
\newcommand{\Cc}{\mathcal{C}}
\newcommand{\cM}{\mathcal{M}}
\newcommand{\Rc}{\mathcal{R}}
\newcommand{\bs}{\boldsymbol}
\newcommand{\ep}{\varepsilon}
\newcommand{\La}{\Lambda}
\newcommand{\Aug}{\mathcal{A}ug}
\DeclareMathOperator{\id}{id}
\DeclareMathOperator{\mfm}{\mathfrak{m}}
\DeclareMathOperator{\CZ}{CZ}
\DeclareMathOperator{\RHom}{RHom}
\DeclareMathOperator{\Hom}{Hom}
\DeclareMathOperator{\CY}{CY}
\DeclareMathOperator{\mcCY}{\mathcal{C}\mathcal{Y}}
\DeclareMathOperator{\bd}{\bs{\delta}}
\DeclareMathOperator{\Cone}{Cone}
\DeclareMathOperator{\RFC}{RFC}
\DeclareMathOperator{\ba}{\textbf{b}}
\DeclareMathOperator{\D}{\Delta}
\newtheorem{lem}{Lemma}[section]
\newtheorem{teo}{Theorem}[section]
\newtheorem{prop}{Proposition}[section]
\theoremstyle{definition}
\newtheorem{defi}{Definition}[section]
\newtheorem{ex}{Example}[section]
\newtheorem{rem}{Remark}[section]
\newtheorem{nota}{Notations}[section]
\begin{document}
\title[Calabi-Yau structure of a Legendrian sphere]{Calabi-Yau structure on the Chekanov-Eliashberg algebra of a Legendrian sphere}
\author{No\'emie Legout}
\address{Department of Mathematics\\
	Uppsala University\\
	Box 480\\
	SE-751 06 Uppsala\\
	Sweden}
\email{noemie.legout@math.uu.se}
\date{}
\maketitle

\begin{abstract}
	In this paper, we prove that the Chekanov-Eliashberg algebra of an horizontally displaceable $n$-dimensional Legendrian sphere in the contactisation of a Liouville manifold is a $(n+1)$-Calabi-Yau differential graded algebra. In particular it means that there is a quasi-isomorphism of DG-bimodules between the diagonal bimodule and the inverse dualizing bimodule associated to the Chekanov-Eliashberg algebra. On some cyclic version of these bimodules, which are chain complexes computing the Hochschild homology and cohomology of the Chekanov-Eliashberg algebra, we construct $A_\infty$ operations and show that the Calabi-Yau isomorphism extends to a family of maps satisfying the $A_\infty$-functor equations.
\end{abstract}
\tableofcontents

\section{Introduction}
%%%%%%%%%%%%%%%%%%%%%%

We consider $n$-dimensional Legendrian submanifolds in a contact manifold $(Y,\alpha)$ which is the contactisation of a Liouville manifold $(P,\beta)$. Among the numerous invariants of Legendrians up to Legendrian isotopy, many of them are derived from the famous Chekanov-Eliashberg algebra (C-E algebra) (\cite{Che,E,EES1,EES2}). This latter is a unital differential graded algebra (DGA) associated to a Legendrian $\La$ and generated by Reeb chords of $\La$, see Section \ref{CEalg} for a brief recall of the definition. In this paper we take the coefficient ring to be the field $\Z_2$.

It is well known that linearized versions of the C-E algebra satisfy a particular type of duality, which was first proved by Sabloff \cite{Sabloff} for Legendrian knots, and generalized to higher dimensions by Ekholm-Etnyre-Sabloff \cite{EESa} and to the bilinearized case by Bourgeois-Chantraine \cite{BCh}. We prove in this paper that under some assumptions, the full C-E algebra, i.e. not linearized, also satisfies a similar type of duality. This duality is expressed in terms of differential graded bimodules (DG-bimodules).
Namely, if $\Ac$ denotes the C-E algebra of a $n$-dimensional Legendrian sphere, then $\Ac$ is a DG $\Ac$-bimodule called the \textit{diagonal bimodule}, and the \textit{inverse dualizing bimodule} of $\Ac$ is the DG $\Ac$-bimodule $\Ac^!:=\RHom_{\Ac-\Ac}(\Ac,\Ac\otimes\Ac)$. We prove that there is a quasi-isomorphism
\begin{alignat}{1}
\mcCY:\Ac\xrightarrow{\simeq}\Ac^![-n-1]\label{rel:CYiso}
\end{alignat}
satisfying $\mcCY\simeq\mcCY^![-n-1]$.
Together with the fact that $\Ac$ is a \textit{homologically smooth} differential graded algebra (see Section \ref{sec:background}), this leads to the following result:
\begin{teo}\label{teointro1}
	Let $\La\subset Y$ be an horizontally displaceable Legendrian sphere. Then its C-E-algebra is a $(n+1)$-Calabi-Yau differential graded algebra.
\end{teo}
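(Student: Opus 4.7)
The introduction already reduces the theorem to two ingredients: (a) homological smoothness of $\Ac$, and (b) a self-dual quasi-isomorphism $\mcCY:\Ac\to\Ac^![-n-1]$, i.e.\ one satisfying $\mcCY\simeq\mcCY^![-n-1]$. Once both are in place the definition of an $(n+1)$-Calabi-Yau DGA is fulfilled, and the theorem follows. My plan is therefore to produce these two ingredients geometrically from the Legendrian sphere $\La$.

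For (a), I would establish perfectness of the diagonal $\Ac$-bimodule by exhibiting a bounded, degree-wise finitely generated free bimodule resolution. A natural candidate is a twisted complex assembled from copies of $\Ac\otimes\Ac$, with generators indexed by Reeb chords of $\La$ and differential built from the C-E differential together with the multiplication; the finiteness of the Reeb chord set of the compact sphere $\La$ is what makes this complex bounded.

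For the Calabi-Yau map itself, the geometric input is counts of pseudoholomorphic disks with boundary on $\La$ carrying several boundary punctures, organised via the cyclic versions of the bar and cobar complexes announced in the abstract. I would define $\mcCY$ at the chain level by sending a Reeb chord $c$ to the bimodule morphism that counts disks with $c$ as a negative puncture and two distinguished positive punctures providing the two output tensor factors in $\Ac\otimes\Ac$. Because the same moduli spaces also define the $\Ac$-bimodule structure on $\Ac^!$, the map $\mcCY$ becomes a chain map almost tautologically. Horizontal displaceability enters to arrange, after a Hamiltonian push-off of $\La$, that the Cthulhu-type complex for two parallel copies of $\La$ is acyclic, which will ultimately force both sides of $\mcCY$ to compute the same Floer-theoretic invariant of $\La$.

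The main obstacle will be to prove that $\mcCY$ is a quasi-isomorphism together with the self-duality $\mcCY\simeq\mcCY^![-n-1]$. For the quasi-isomorphism the natural tool is an action filtration: $\mcCY$ preserves the filtration by total Reeb chord action, and on the associated graded it should reduce to an explicit $(n+1)$-shifted Poincar\'e duality on $\La=S^n$. Horizontal displaceability then delivers convergence of the spectral sequence that compares the two filtered complexes. The self-duality reflects a $\Z/2$-symmetry of the underlying disk moduli spaces, obtained by swapping the two distinguished outputs; extending this symmetry coherently to the higher cyclic operations is precisely the content of the $A_\infty$-functor equations announced in the abstract, and is the technical heart of the argument.
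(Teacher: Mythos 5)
You have the right global architecture: a two-copy of $\La$, disc counts with two distinguished punctures (the ``bananas''), acyclicity of the two-copy complex from horizontal displaceability, and a finite resolution coming from finiteness of the Reeb chord set. (One small correction there: the resolution of the diagonal bimodule needs one free generator \emph{beyond} the Reeb chords --- in the paper this is the maximum Morse chord $x_{01}$ of the push-off, playing the role of the $\Ac\otimes\Ac$ summand in the standard resolution $\Ac\otimes V\otimes\Ac\oplus\Ac\otimes\Ac$ of the diagonal bimodule of a tensor algebra.) The genuine gap is in your mechanism for proving that $\mcCY$ is a quasi-isomorphism. The paper does not compare the two sides by an action-filtration spectral sequence whose associated graded is Poincar\'e duality of $S^n$: it identifies $\Cone(\CY)$ with the Rabinowitz bimodule $\RFC_{\Ac-\Ac}(\La_0,\La_1)$ of the two-copy and invokes its acyclicity, which is exactly where horizontal displaceability does all the work. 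In your sketch displaceability only ``delivers convergence of the spectral sequence''; but convergence would come from finiteness of the chord set, and the claim that the associated graded of $\mcCY$ is an isomorphism cannot be correct independently of displaceability, since $\mcCY$ fails to be a quasi-isomorphism for non-displaceable spheres. Moreover, with the paper's action conventions every generator of the target has strictly smaller action than every non-Morse generator of the source, so there is no ``diagonal'' term to isolate; and ``both sides compute the same Floer-theoretic invariant'' does not show that this particular map induces the isomorphism.

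The second gap is the self-duality $\mcCY\simeq\mcCY^![-n-1]$. This is not a tautological $\Z/2$-symmetry of the banana moduli spaces obtained by swapping the two distinguished punctures: swapping them changes the boundary conditions (which boundary arc lies on $\R\times\La_0$ versus $\R\times\La_1$, hence where the pure negative punctures sit), so the two counts a priori differ. The paper proves they agree by translating both counts, via the correspondence of Ekholm--Etnyre--Sabloff between discs on the two-copy and generalized discs on $\R\times\La$ carrying gradient flow lines of the perturbing Morse function, and the entries involving the Morse chords require a separate evaluation-map argument at a boundary marked point, showing $\#ev^{-1}(x)=\#ev^{-1}(y)$ by properness. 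None of this appears in your sketch. Finally, this self-duality is not ``the content of the $A_\infty$-functor equations'': those govern the higher operations on the cyclic complexes and are logically independent of the identity $\mcCY=\mcCY^![-n-1]$ needed for the Calabi--Yau property.
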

The definition of Calabi-Yau structure we use here is the one of Ginzburg \cite{Gin} (see also \cite{KS}), although we have some opposite sign convention for degrees.\\

For the following reasons, it was reasonable to expect that the C-E algebra of a displaceable Legendrian sphere admits a Calabi-Yau structure.
In his thesis, Ganatra \cite{Ganatra:thesis} showed that the wrapped Fukaya category $\mathcal{W}$ of a Weinstein manifold is a non-compact Calabi-Yau category, where the Calabi-Yau structure is induced similarly as for \eqref{rel:CYiso} by an equivalence of $A_\infty$ bimodules between the diagonal bimodule $\mathcal{W}$ and an analogue of the inverse dualizing bimodule in the $A_\infty$ setting, $\mathcal{W}^!$.
Now, observe that the wrapped Fukaya category of a Weinstein manifold $X$ (of finite type) is generated by the Lagrangian cocores obtained by attaching critical handles to Legendrian spheres in the ideal contact boundary of $X$, see \cite{CDGG:generation,GPS:generation}.
A Calabi-Yau structure on the C-E algebra of these Legendrian spheres should then be possible to construct by using the surgery isomorphism \cite{BEE, EL, Ekholm:surgery}. Recall that the surgery isomorphism gives an $A_\infty$ quasi-isomorphism between the wrapped Floer cohomology of the cocores, i.e. the endomorphism groups of the generators of the wrapped Fukaya category, and the C-E algebra of the Legendrian attaching spheres. 
Note that $X$ is taken here to be a subcritical Weinstein manifold. When $X$ is the standard ball, the C-E algebra of a Legendrian in its boundary can be computed inside a contact Darboux ball where it is displaceable, see for example \cite{D:refined}.

This suggests that a Calabi-Yau structure on the C-E algebra of a Legendrian sphere could potentially be obtained from the one defined by Ganatra via the surgery isomorphism. However we will not adopt this method in this paper.
Instead, we restrict ourselves to Legendrian spheres in the contactization of a Liouville manifold and introduce a version of the Rabinowitz Floer homology for Legendrians with coefficients in the free $\Ac$-bimodule $\Ac\otimes\Ac$ of rank $1$, where $\Ac$ denotes the C-E algebra.

Rabinowitz Floer homology was originally defined as an homology theory for contact type hypersurfaces in \cite{CF}. A relative theory, Lagrangian Rabinowitz Floer homology, has then been introduced in \cite{Merry} for exact Lagrangians in a Liouville manifold, and more recently both the non-relative and relative theories have been generalized to the case of Liouville cobordisms admitting a filling in \cite{CO}. These previous theories were defined in the Hamiltonian setting and in \cite{L2} we introduced an SFT-type version of the Lagrangian Rabinowitz Floer homology for Lagrangians in a (trivial) Liouville cobordism. There, we used augmentations of the C-E algebras of Legendrians in the negative ends of Lagrangian cobordisms in order to define a complex over $\Z_2$. In this paper, we consider only the Rabinowitz complex of cylinders over a Legendrian submanifold and we don't use augmentations so that we get a DG-bimodule with coefficients in the C-E algebras of the Legendrians.

More precisely, the Rabinowitz DG-bimodule considered here is generated by mixed chords of a $2$-copy $\La_0\cup\La_1$ of a Legendrian $\La$, where $\La_1$ is a small negative push-off of $\La_0:=\La$, and its differential is defined by a count of pseudo-holomorphic discs with boundary on $\R\times(\La_0\cup\La_1)$.  The differential is lower triangular and so the Rabinowitz bimodule is the cone of a DG-bimodule map $C_+(\La_0,\La_1)\to C_-(\La_0,\La_1)$, where $C_+$ is generated by chords from $\La_0$ to $\La_1$ (in bijective correspondence with chords of $\La$) and $C_-$ is generated by chords from $\La_1$ to $\La_0$ (in bijective correspondence with chords of $\La$ and critical points of a Morse function on $\La$). For a Legendrian sphere $\La$ and its C-E algebra $\Ac$, the Rabinowitz bimodule can be described as the cone of a slightly different DG bimodule map
\begin{alignat*}{1}
	\CY:\widehat{C}_+(\La_0,\La_1)\to\widecheck{C}_-(\La_0,\La_1)
\end{alignat*}
which corresponds to a slight modification of the action filtration. Here $\widehat{C}_+(\La_0,\La_1)$ is generated by Reeb chords from $\La_0$ to $\La_1$ as well as the maximum of a given Morse function on $\La$, and $\widecheck{C}_-(\La_0,\La_1)$ is generated by chords from $\La_1$ to $\La_0$ and the minimum of the Morse function. Then, we show that $\widehat{C}_+(\La_0,\La_1)[-n-1]$ is quasi-isomorphic to $\Ac$ and $\widecheck{C}_-(\La_0,\La_1)$ is quasi-isomorphic to $\RHom_{\Ac-\Ac}(\Ac,\Ac\otimes\Ac)$. 
%is up to quasi-isomorphism the cone of the Calabi-Yau map $\Ac\to\Ac^![-n-1]$, where $\Ac$ is the C-E algebra of $\La$. 
The invariance of the Rabinowitz homology up to Legendrian isotopies implies that if $\La$ is horizontally displaceable, then the complex is acyclic. The shifted map $\CY[-n-1]$ provides thus a Calabi-Yau quasi-isomorphism $\Ac\to\Ac^![-n-1]$. \\

In general, we expect that the Rabinowitz complex is acyclic for any Legendrian in the contact boundary of a subcritical Weinstein manifold. Note that for example the periodic orbit version of Rabinowitz Floer homology and Rabinowitz Floer homology for fillable Legendrians both vanish there, see \cite{CO}.
We expect also that the Calabi-Yau structure we construct here coincides with that constructed by Ganatra in \cite{Ganatra:thesis}, and plan to show this in future work with Asplund. One advantage with the perspective taken here compared to that in Ganatra is from the point-of-view of computability. The pseudo-holomorphic discs that define the operations we consider can be computed using Ekholm’s theory of gradient flow trees \cite{Ekholm:trees}, while the operations defined by Hamiltonian perturbations seem to lack general techniques for computation. \\

By taking bimodule tensor products of both bimodules $\widehat{C}_+(\La_0,\La_1)$ and $\widecheck{C}_-(\La_0,\La_1)$ with the diagonal bimodule $\Ac$ one gets complexes which we denote $\widehat{C}_+^{cyc}(\La_0,\La_1)$ and $\widecheck{C}_-^{cyc}(\La_0,\La_1)$ and which compute the Hochschild homology and cohomology of $\Ac$ respectively. On these complexes we construct higher order maps, i.e. for any $(d+1)$-copy $\La_0\cup\dots\cup\La_d$ we construct maps
\begin{alignat*}{1}
	&\widehat{\mfm}_d:\widehat{C}_+^{cyc}(\La_{d-1},\La_d)\otimes\dots\otimes\widehat{C}_+^{cyc}(\La_0,\La_1)\to\widehat{C}_+^{cyc}(\La_0,\La_d)\\
	&\widecheck{\mfm}_d:\widecheck{C}_-^{cyc}(\La_{d-1},\La_d)\otimes\dots\otimes\widecheck{C}_-^{cyc}(\La_0,\La_1)\to\widecheck{C}_-^{cyc}(\La_0,\La_d)
\end{alignat*}
satisfying the $A_\infty$-equations, see Section \ref{sec:Product}. The maps $\widecheck{\mfm}_d$ are defined by a count of pseudo-holomorphic discs with boundary on $\R\times(\La_0\cup\dots\cup\La_d)$, with $d$ negative asymptotics which are inputs and one positive asymptotic which is the output. These maps are well-known, they compute the $A_\infty$-structure of the augmentation category $\Aug_-(\La)$ \cite{BCh} with a formal unit added (corresponding to the minimum of the Morse function here). One the other side, the maps $\widehat{\mfm}_d$ are defined by a count of certain $2$ levels pseudo-holomorphic buildings appearing in the boundary of the compactification of $1$-dimensional moduli spaces. In this sense it can be seen as a \textit{secondary type} product. 
Finally, we show that the map $\CY$ induced on the cyclic complexes extends to a family of maps 
\begin{alignat*}{1}
	\CY_d:\widehat{C}_+^{cyc}(\La_{d-1},\La_d)\otimes\dots\otimes\widehat{C}_+^{cyc}(\La_0,\La_1)\to\widecheck{C}_-^{cyc}(\La_0,\La_d)
\end{alignat*}
satisfying the $A_\infty$-functor equations. 

Observe that the map $\CY$ gives an isomorphism between the Hochschild homology and cohomology of $\Ac$, which after generalizing our definition of the Rabinowitz complex to Legendrians in more general contact manifolds, would recover the quasi-isomorphism between Hochschild homology and cohomology for the wrapped Fukaya category in \cite{Ganatra:thesis}. Moreover we presumably recover the relation between the different product structures as constructed by Bourgeois-Ekholm-Eliashberg in \cite{BEE:product}.\\

\textit{Acknowledgment:} I am very grateful to Georgios Dimitroglou Rizell for all the discussions we had together and his generosity in sharing ideas.
I also thank Paolo Ghiggini who gave me valuable comments on a first version of the paper. In particular he pointed out a remark which I was using in some proof but which was wrong. 
This work is supported by the Knut and Alice Wallenberg Foundation under the grants KAW 2021.0191 and KAW 2021.0300, and by the Swedish Research Council under the grant number 2020-04426.
 
\section{Background on DG-bimodules}\label{sec:background}
%%%%%%%%%%%%%%%%%%%%%%%%%%%%%%%%%%%%%%%%%%%%%%%%%%%%%%

Let $(\Ac_0,\partial_{\Ac_0})$, $(\Ac_1,\partial_{\Ac_1})$ be unital differential graded algebras (DGAs) over $\Z_2$ (we restrict to $\Z_2$ for simplicity here but it is not strictly necessary). A \textit{DG $\Ac_1-\Ac_0$-bimodule} is a graded $\Ac_1-\Ac_0$-bimodule $\Bc$ endowed with a degree $1$ differential $\partial_\Bc$ such that $\partial_\Bc(\bs{\alpha}_1b\bs{\alpha}_0)=\partial_{\Ac_1}(\bs{\alpha}_1)b\bs{\alpha}_0+\bs{\alpha}_1\partial_\Bc(b)\bs{\alpha}_0+\bs{\alpha}_1b\partial_{\Ac_0}(\bs{\alpha}_0)$.	 
If $(\Ac_0,\partial_{\Ac_0})=(\Ac_1,\partial_{\Ac_1})=(\Ac,\partial_\Ac)$, we write simply $\Ac$-bimodule instead of $\Ac-\Ac$-bimodule.
%A \textit{left DG $\Ac$-module} is a $\Z_2-\Ac$-bimodule. 

\begin{ex}
	\begin{enumerate}
		\item A DGA $(\Ac,\partial_\Ac)$ is itself a DG $\Ac$-bimodule, called the \textit{diagonal bimodule}.
		\item The tensor product of a DGA $\Ac$ with itself over $\Z_2$ is also a DG $\Ac$-bimodule, with differential given by $\partial_{\Ac\otimes\Ac}(a_1\otimes a_0)=\partial_\Ac(a_1)\otimes a_0+a_1\otimes\partial_\Ac(a_0)$. Note that this bimodule carries two different bimodule structures, the \textit{outer} and the \textit{inner} bimodule structure:
		$$\begin{array}{cccc}
			\mu_{\Ac\otimes\Ac}^{out}:&\Ac\otimes (\Ac\otimes\Ac)\otimes\Ac&\to& \Ac\otimes\Ac\\
			&\bs{\alpha}\otimes (a\otimes a')\otimes\bs{\alpha}'&\mapsto&\bs{\alpha}a\otimes a'\bs{\alpha}'
		\end{array}$$
		$$\begin{array}{cccc}
			\mu_{\Ac\otimes\Ac}^{in}:&\Ac\otimes (\Ac\otimes\Ac)\otimes\Ac&\to& \Ac\otimes\Ac\\
			&\bs{\alpha}\otimes (a\otimes a')\otimes\bs{\alpha}'&\mapsto&a\bs{\alpha}\otimes\bs{\alpha}'a'
		\end{array}$$
		\item Given two DG $\Ac_1-\Ac_0$-bimodules $\Bc$ and $\Cc$, the set $\Hom_{\Ac_1-\Ac_0}(\Bc,\Cc)$ of bimodule maps is a DG $\Ac_1-\Ac_0$-bimodule whose differential is given by $D(\phi)=\phi\circ\partial_\Bc+\partial_\Cc\circ\phi$.
	\end{enumerate}
\end{ex}

In this paper, we denote $-\otimes-$ instead of  $-\otimes_{\Z_2}-$ for the tensor product over $\Z_2$.
A \textit{DG-morphism} of $\Ac_1-\Ac_0$-bimodules from $\Bc$ to $\Cc$ is a degree $0$ element $\phi$  in $\Hom_{\Ac_1-\Ac_0}(\Bc,\Cc)$ which commutes with the differentials of $\Bc$ and $\Cc$; in other words, it is a degree $0$ cycle in $\big(\Hom_{\Ac_1-\Ac_0}(\Bc,\Cc),D\big)$. A \textit{quasi-isomorphism} of DG-bimodules is a DG-morphism which induces an isomorphism in homology.

Following \cite{FHT}, we recall that a DG $\Ac_1-\Ac_0$-bimodule $(\Bc,\partial_\Bc)$ is \textit{free} if it is isomorphic to $\Ac_1\otimes V\otimes\Ac_0$ where $V$ is a $\Z_2$-vector space generated by cycles. In this case a \textit{free generating set} for $\Bc$ is a basis of $V$.
Given a DGA $\Ac$ the diagonal bimodule is not a free bimodule while $\Ac\otimes\Ac$ endowed with either the outer or the inner structure is free, generated by $1\otimes1$.
A DG-bimodule $\Bc$ is called \textit{semifree} if there is a filtration
\begin{alignat*}{1}
	\{0\}\subset F_0\Bc\subset F_1\Bc\subset\dots\subset\Bc
\end{alignat*}
such that $F_i\Bc$ is a DG-sub-bimodule for all $i\geq0$, $\bigcup F_i\Bc=\Bc$, and $F_0\Bc$ and $F_{i+1}\Bc/F_i\Bc$ are free bimodules. We say that a semifree DG-bimodule $\Bc$ as above is of \textit{finite rank} if there is a $k\in\N$ such that $F_k\Bc=\Bc$.
A \textit{semifree resolution} of a DG-bimodule $\Bc$ is a semifree DG bimodule $R_\Bc$ together with a quasi-isomorphism of DG-bimodules $R_\Bc\to\Bc$.\\

In the category of DG $\Ac_1-\Ac_0$-bimodules, $\RHom_{\Ac_1-\Ac_0}(-,\Cc)$ denotes the right derived functor of the functor $\Hom_{\Ac_1-\Ac_0}(-,\Cc)$. Let $R_\Bc$ be any semifree resolution of the $\Ac_1-\Ac_0$-bimodule $\Bc$, we have by definition that $\RHom_{\Ac_1-\Ac_0}(\Bc,\Cc)=\Hom_{\Ac_1-\Ac_0}(R_\Bc,\Cc)$, which is well defined up to quasi-isomorphism.
Given a DGA $\Ac$, the \textit{inverse dualizing bimodule}, denoted $\Ac^!$, is $\RHom_{\Ac-\Ac}(\Ac,\Ac\otimes\Ac)$. The elements of $\RHom_{\Ac-\Ac}(\Ac,\Ac\otimes\Ac)$ are bimodule morphisms from the diagonal bimodule $\Ac$ to $\Ac\otimes\Ac$ where $\Ac\otimes\Ac$ is endowed with the inner bimodule structure.
Then, $\RHom_{\Ac-\Ac}(\Ac,\Ac\otimes\Ac)$ is a DG $\Ac$-bimodule with a bimodule structure induced by the outer bimodule structure on $\Ac\otimes\Ac$, i.e. for $\phi\in\RHom_{\Ac-\Ac}(\Ac,\Ac\otimes\Ac)$ which can be written as $\phi=\phi_l\otimes\phi_r$ and for any $\bs{v},\bs{a}, \bs{w}\in\Ac$ we have
\begin{alignat*}{1}
	(\bs{v}\cdot\phi\cdot\bs{w})(\bs{a})=\bs{v}\phi_l(\bs{a})\otimes\phi_r(\bs{a})\bs{w}
\end{alignat*}
Note that if $\phi\colon\Bc\to\Cc$ is a DG-morphism of $\Ac$-bimodules in the derived sense, then there is an induced morphism $\phi^!\colon\Cc^!\to\Bc^!$ given by $\phi^!(c^!)=c^!\circ\phi$.
\begin{rem}
	The semifree resolution mentioned above in order to compute $\RHom_{\Ac_1-\Ac_0}(\Bc,\Cc)$ can be taken to be the bar resolution of $\Bc$; however this one is not of finite rank. In this paper we will be interested in the inverse dualizing bimodule $\RHom_{\Ac-\Ac}(\Ac,\Ac\otimes\Ac)$ of the Chekanov-Eliashberg DGA (see Section \ref{CEalg}) of a closed Legendrian submanifold. In this case, the diagonal bimodule $\Ac$ admits a finite rank semifree resolution, see Section \ref{subsec:CYiso}.
\end{rem}

The purpose of this paper is to prove that under some hypothesis the Chekanov-Eliashberg algebra of a Legendrian sphere is a Calabi-Yau DGA. Let us recall the necessary definition.
\begin{defi}\cite{KS}
	Let $(\Ac,\partial_\Ac)$ be a DGA.
	\begin{enumerate}
		\item A DG $\Ac$-bimodule is \textit{perfect} if it is quasi-isomorphic to a direct summand of a finite dimensional semifree DG-bimodule.
		\item The DGA $\Ac$ is \textit{homologically smooth} if it is perfect as a DG $\Ac$-bimodule.
	\end{enumerate}	
\end{defi}
\begin{defi}\label{defi:CY}\cite{Gin}
	A homologically smooth DGA $\Ac$ is called $d$-\textit{Calabi-Yau} if there is a quasi-isomorphism of DG $\Ac$-bimodules
\begin{alignat*}{1}
	\phi:\Ac\to\Ac^![-d]
\end{alignat*}
such that $\phi\simeq\phi^![-d]$.
\end{defi}
\begin{nota}
	The convention we use for shifts in this paper is that if $|a|$ is the degree of an element of a DG-bimodule $\Bc$, then the same element viewed in $\Bc[d]$ has degree $|a|+d$. Then, for a DG-bimodule morphism $f:\Bc\to\Cc$ we denote $f[d]:\Bc[d]\to\Cc[d]$ the shifted map.
\end{nota}

\section{Moduli spaces}\label{sec:moduli}
%%%%%%%%%%%%%%%%%%%%%%%%
We will be working in the same setting as in \cite{L2} and refer to Sections 2.2-2.6 of the mentioned paper and references therein for more details about the moduli spaces of pseudo-holomorphic discs we consider here. Throughout this paper, when we consider a Legendrian submanifold of $Y$ we always assume that it is closed and non-degenerate in the sense that it admits a finite number of Reeb chords which are isolated and correspond to transverse intersection points of the Lagrangian projection on $P$.

Let $\La\subset Y$ be a Legendrian and denote $\Rc(\La)$ the set of Reeb chords of $\La$. For any chord $\gamma\in\Rc(\La)$ we denote $\CZ(\gamma)$ its Conley-Zehnder index, see \cite{EES1}.
\begin{rem}
		In case $\La$ is not connected, there are additional choices (as for example choices of paths between the various connected components) to make in order to define the Conley-Zehnder index of a Reeb chord connecting two distinct connected components, see \cite{DR}. The index of a chord depends on these additional choices but for any two chords from a connected component to another, the difference in index is independent of the choices. Moreover, we assume in this paper that the Legendrian we consider always have Maslov number $0$ and that the first Chern class of $P$ vanishes. In this case we get a well defined $\Z$-valued Conley-Zehnder index (after the potentially additional choices discussed above).
\end{rem}
Let $J_P$ be an almost complex structure on $(P,\beta)$ which is compatible with $d\beta$ and cylindrical outside of a compact set in the cylindrical end of $P$. We call such a structure \textit{admissible}.
Then, we denote $J$ the almost complex structure on $(\R\times Y,d(e^t\alpha))$ which is the cylindrical lift of $J_P$, i.e. the unique cylindrical almost complex structure $J$ on $\R\times Y$ such that the projection $\pi_P:\R\times P\times\R\to P$ is $(J,J_P)$-holomorphic.

Given Reeb chords $\gamma, \gamma_1,\dots,\gamma_d$ of $\La$ we denote
\begin{alignat*}{1}
	\widehat{\cM}_\La(\gamma;\gamma_1,\dots,\gamma_d)
\end{alignat*}
the moduli space of $J$-holomorphic discs with boundary on $\R\times\La$ having a positive asymptotic at $\gamma$ and negative asymptotics at $\gamma_1,\dots,\gamma_d$. As the boundary condition for these discs is cylindrical as well as the almost complex structure, there is an action of $\R$ by translation on these moduli spaces. We denote
\begin{alignat*}{1}
	\cM_\La(\gamma;\gamma_1,\dots,\gamma_d)=:\widehat{\cM}_\La(\gamma;\gamma_1,\dots,\gamma_d)/\R
\end{alignat*}
the quotient by this action. 
 
Let $\La_0,\dots,\La_d\subset Y$ be $d+1$ Legendrian submanifolds such that the link $\La_0\cup\dots\cup\La_d$ is non degenerate. For any $0\leq i\neq j\leq d$, we denote $\Rc(\La_i,\La_j)$ the set of Reeb chords from $\La_j$ to $\La_i$. Such chords are called \textit{mixed} while chords in $\Rc(\La_i)$ are called \textit{pure}. Let $\gamma_{0d}\in\Rc(\La_0,\La_d)$, $(\gamma_1,\dots,\gamma_d)$ be a $d$-tuple of Reeb chords such that $\gamma_i\in\Rc(\La_{i-1},\La_i)\cup\Rc(\La_i,\La_{i-1})$, and $\bd_i$ for $0\leq i\leq d$ words of Reeb chords of $\La_i$. We denote
\begin{alignat*}{1}
	\cM_{\La_{0\dots d}}(\gamma_{0d};\bd_0,\gamma_1,\bd_1,\dots,\gamma_d,\bd_d)
\end{alignat*}
the quotient by the action of $\R$ of the moduli space of pseudo-holomorphic discs satisfying the following:
\begin{itemize}
	\item the boundary of the discs lie on the ordered $(d+1)$-tuple of Lagrangians $\R\times\La_0,\dots,\R\times\La_d$ when following the boundary counter clockwise,
	\item the discs in this moduli space are positively asymptotic to the Reeb chord $\gamma_{0d}$, positively or negatively asymptotic to the Reeb chords $\gamma_i$, and negatively asymptotic to the words of Reeb chords $\bd_i$
\end{itemize} 
Similarly, for a chord $\gamma_{d0}\in\Rc(\La_d,\La_0)$, and other chords as above, we denote
\begin{alignat*}{1}
	\cM_{\La_{0\dots d}}(\gamma_{d0};\bd_0,\gamma_1,\bd_1,\dots,\gamma_d,\bd_d)
\end{alignat*}
the quotient by the action of $\R$ of the moduli space of pseudoholomorphic discs with the same boundary conditions as above, negatively asymptotic to the Reeb chord $\gamma_{d0}$, and with the same asymptotic conditions as above for the other punctures.
\begin{rem}
	In the notation we employ, the first asymptotic $\gamma$, $\gamma_{0d}$ or $\gamma_{d0}$ will always be the output of a map defined by a count of rigid discs in the corresponding moduli spaces. 
	When the Lagrangian boundary conditions for the pseudoholomorphic discs is the ordered $(d+1)$-tuple $(\R\times\La_0,\dots,\R\times\La_d)$, knowing if the first asymptotic is a positive or negative asymptotic is enough to determine if the other mixed asymptotics are positive or negative, according to their direction. More precisely, if a chord $\gamma_i$ is in $\Rc(\La_i,\La_{i-1})$ then it will always be a positive asymptotic while if it is in $\Rc(\La_{i-1},\La_i)$ it will always be a negative asymptotic.
\end{rem}
By \cite{DR}, if the almost complex structure $J$ is the cylindrical lift of an  admissible almost complex structure on $P$ which is regular (i.e. such that the pseudo-holomorphic discs $\pi_P\circ u$ are transversely cut out, for any pseudo-holomorphic disc $u$ in any of the moduli spaces described above), then $J$ is regular meaning that the moduli spaces we described above are transversely cut out. The necessary transversality results for moduli spaces of pseudo-holomorphic discs in $P$ with boundary on $\pi_P(\La)$ are carried out in \cite{EES1}. When transversality holds, these moduli spaces are thus smooth manifolds which can moreover be compactified in the sense of Gromov.

The dimension of a moduli space can be expressed in terms of the Conley-Zehnder indices of its asymptotics. For moduli spaces with only pure asymptotics, we have 
\begin{alignat*}{1}
	&\dim\cM_{\La}(\gamma;\gamma_1,\dots,\gamma_d)=(\CZ(\gamma)-1)-\sum_{i=1}^d\big(\CZ(\gamma_i)-1\big)-1
\end{alignat*}
Then, for a word of pure Reeb chords $\bd=\delta_1\dots\delta_k$ let us denote $\ell(\bd)=k$ its length and denote $\CZ(\bd):=\sum\limits_{i=1}^k\CZ(\delta_i)$. Moreover, let us denote $j^+$ the number of positive mixed Reeb chord asymptotics among $\{\gamma_1,\dots,\gamma_d\}$. We have:
\begin{alignat*}{1}
		&\dim\cM_{\La_{0\dots d}}(\gamma_{0d};\bs{\delta}_0,\gamma_1,\bs{\delta}_1,\dots,\gamma_d,\bs{\delta}_d)=(\CZ(\gamma_{0d})-1)+\sum_{\gamma_i\in\Rc(\La_{i},\La_{i-1})}\big(\CZ(\gamma_i)-1\big)\\
		&\hspace{35mm}-\sum_{\gamma_i\in\Rc(\La_{i-1},\La_{i})}\big(\CZ(\gamma_i)-1\big)-\sum\big(\CZ(\bd_i)-\ell(\bd_i)\big)+(2-n)j^+-1
\end{alignat*}
and
\begin{alignat*}{1}
		&\dim\cM_{\La_{0\dots d}}(\gamma_{d0};\bs{\delta}_0,\gamma_1,\bs{\delta}_1,\dots,\gamma_d,\bs{\delta}_d)=-\big(\CZ(\gamma_{d0})-1\big)+\sum_{\gamma_i\in\Rc(\La_{i},\La_{i-1})}\big(\CZ(\gamma_i)-1\big)\\
		&\hspace{3cm}-\sum_{\gamma_i\in\Rc(\La_{i-1},\La_{i})}\big(\CZ(\gamma_i)-1\big)-\sum\big(\CZ(\bd_i)-\ell(\bd_i)\big)+(2-n)(j^+-1)-1
\end{alignat*}
We refer to \cite[Section 4.3]{CDGG2long} for the computation of these dimensions.
In the following we add an exponent to indicate the dimension of the moduli space, i.e. $\cM^i_\La(\gamma;\gamma_1,\dots,\gamma_d)$ denotes a $i$-dimensional moduli space.
We call \textit{rigid} the pseudo-holomorphic discs in a $0$-dimensional moduli space.

We define the \textit{action} $\mathfrak{a}(\gamma_{ij})$ of a Reeb chord $\gamma_{ij}$ by
$\mathfrak{a}(\gamma_{ij})=\int_{\gamma_{ij}}\alpha$ if $i>j$, and $\mathfrak{a}(\gamma_{ij})=-\int_{\gamma_{ij}}\alpha$ if $i\leq j$.
By positivity of energy for pseudo-holomorphic discs in the moduli spaces defined above, we have  the following. If the moduli space $\cM_\La(\gamma;\gamma_1,\dots,\gamma_d)$ is not empty then the action of the asymptotics satisfy
\begin{alignat*}{1}
	-\mathfrak{a}(\gamma)+\sum\limits_{i=1}^d\mathfrak{a}(\gamma_i)\geq0
\end{alignat*}
If $\cM_{\La_{0\dots d}}(\gamma_{0d};\bd_0,\gamma_1,\bd_1,\dots,\gamma_d,\bd_d)$ is not empty then we have:
\begin{alignat*}{1}
	-\mathfrak{a}(\gamma_{0d})+\sum\limits_{\gamma_i\in\Rc(\La_{i-1},\La_i)}\mathfrak{a}(\gamma_i)+\sum\limits_{\gamma_i\in\Rc(\La_i,\La_{i-1})}\mathfrak{a}(\gamma_i)+\sum_{j=0}^d\mathfrak{a}(\bd_j)\geq0
\end{alignat*}
Finally if $\cM_{\La_{0\dots d}}(\gamma_{d0};\bd_0,\gamma_1,\bd_1,\dots,\gamma_d,\bd_d)$ is non empty then we have:
\begin{alignat*}{1}
	-\mathfrak{a}(\gamma_{d0})+\sum\limits_{\gamma_i\in\Rc(\La_{i-1},\La_i)}\mathfrak{a}(\gamma_i)+\sum\limits_{\gamma_i\in\Rc(\La_i,\La_{i-1})}\mathfrak{a}(\gamma_i)+\sum_{j=0}^d\mathfrak{a}(\bd_j)\geq0
\end{alignat*}
In particular, the definition of action we choose here implies that the maps which we will be defined later by a count of pseudo-holomorphic discs in the moduli spaces above will be action decreasing.

\section{The Chekanov-Eliashberg DGA}\label{CEalg}
%%%%%%%%%%%%%%%%%%%%%%%%%%%%%%%%%%%%%
In this section we briefly recall the definition of the Chekanov-Eliashberg algebra (C-E algebra) of a Legendrian originally defined in \cite{Che,E} and refer to \cite{Che,EES1,EES2} for details.

Let $\La\subset Y$ be a Legendrian and denote $C(\La)$ the $\Z_2$-module generated by Reeb chords of $\La$. The Chekanov-Eliashberg DGA of $\La$, denoted $\Ac(\La)$, is the tensor algebra of $C(\La)$ over $\Z_2$:
\begin{alignat*}{1}
	\Ac(\La)=\bigoplus_{i\geq0}C(\La)^{\otimes i}
\end{alignat*}
with $C(\La)^{\otimes0}=\Z_2$. The grading of a Reeb chord $\gamma\in\Rc(\La)$ is given by $|\gamma|_\Ac=1-\CZ(\gamma)$, and we extend it to the whole algebra $\Ac(\La)$ by $|\gamma_1\dots\gamma_d|_\Ac=|\gamma_1|_\Ac+\dots+|\gamma_d|_\Ac$.

\begin{rem}
	Note that in this paper we use a slightly unconventional grading for the C-E algebra. It is chosen in such a way that the differential will be a map of degree $1$ instead of a map of degree $-1$.
\end{rem}
The differential $\partial_\Ac$ on $\Ac$ is given on Reeb chords by
\begin{alignat*}{1}
	\partial_\Ac(\gamma)=\sum_{d\geq0}\sum_{\gamma_1,\dots,\gamma_d}\#\cM^0_\La(\gamma;\gamma_1,\dots,\gamma_d)\cdot\gamma_1\dots\gamma_d
\end{alignat*}
and extends to the whole algebra $\Ac$ by Leibniz rule, i.e. $\partial_\Ac(\gamma_1\gamma_2)=\partial_\Ac(\gamma_1)\gamma_2+\gamma_1\partial_\Ac(\gamma_2)$.
\begin{rem}
	In the original definition of the C-E algebra of a Legendrian, the differential is defined by a count of pseudo-holomorphic discs with boundary on the projection $\pi_P(\La)$, for $\pi_P:\R\times P\to P$. Dimitroglou-Rizell proved in \cite{DR} that the differential can equivalently be defined by a count of pseudo-holomorphic discs with boundary on $\R\times\La\subset\R\times Y$. We use this latter perspective in this paper.
\end{rem}
\begin{teo}\cite{Che,EES1,EES2}
	The map $\partial_\Ac$ is a degree $1$ map which satisfies $\partial_\Ac^2=0$.
\end{teo}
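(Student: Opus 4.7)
The plan splits into two parts: checking that $\partial_\Ac$ raises degree by $1$, and then establishing $\partial_\Ac^2 = 0$ by a standard compactness/gluing argument applied to $1$-dimensional moduli spaces.

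For the degree, it suffices by Leibniz to check generators. The monomials $\gamma_1\cdots\gamma_d$ appearing in $\partial_\Ac(\gamma)$ are indexed by $\cM^0_\La(\gamma;\gamma_1,\ldots,\gamma_d)$; setting the dimension formula from Section~\ref{sec:moduli} equal to zero rearranges to $\CZ(\gamma)-\sum_i \CZ(\gamma_i)=2-d$, and hence
\begin{alignat*}{1}
|\gamma_1\cdots\gamma_d|_\Ac-|\gamma|_\Ac \;=\; \Big(d-\sum_i \CZ(\gamma_i)\Big)-\big(1-\CZ(\gamma)\big) \;=\; 1.
\end{alignat*}

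For $\partial_\Ac^2=0$ I would again reduce to a generator $\gamma$ via Leibniz and expand:
\begin{alignat*}{1}
\partial_\Ac^2(\gamma)=\sum_{d,\gamma_1,\ldots,\gamma_d}\#\cM^0_\La(\gamma;\gamma_1,\ldots,\gamma_d)\sum_{i=1}^d \gamma_1\cdots\partial_\Ac(\gamma_i)\cdots\gamma_d.
\end{alignat*}
Each term on the right corresponds to a two-level pseudo-holomorphic building: a top-level rigid disc in $\cM^0_\La(\gamma;\gamma_1,\ldots,\gamma_d)$ with one of its negative asymptotics, $\gamma_i$, capped by a bottom-level rigid disc in $\cM^0_\La(\gamma_i;\ldots)$. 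Fixing a target word $\bs{\delta}=\delta_1\cdots\delta_m$, I would identify the coefficient of $\bs{\delta}$ in $\partial_\Ac^2(\gamma)$ with the $\Z_2$-count of the boundary of the compactification $\overline{\cM^1_\La}(\gamma;\delta_1,\ldots,\delta_m)$, which is $1$-dimensional by the dimension formula and a smooth manifold by the regularity quoted from \cite{DR}. SFT compactness (with sphere and disc bubbling excluded by exactness of $P$ and the Legendrian boundary condition) shows that every degeneration along this $1$-parameter family yields a two-level building of the type above, and the gluing theorem of \cite{EES1} provides the inverse assignment. Since the boundary of a compact $1$-manifold has even cardinality, this coefficient vanishes mod $2$.

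The nontrivial content sits entirely in this compactness/gluing dictionary: showing that only $2$-level (and not deeper) breakings appear in codimension one, excluding sphere/disc bubbling, and checking that each rigid two-level configuration is glued from a unique end of $\overline{\cM^1_\La}$. Under the admissibility of $J$, all of these are packaged by the compactness and gluing results of \cite{EES1, DR} recalled in Section~\ref{sec:moduli}; once this dictionary is granted, the vanishing $\partial_\Ac^2=0$ is immediate.
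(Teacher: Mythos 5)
The paper does not prove this theorem; it is quoted directly from \cite{Che,EES1,EES2}, and your sketch reproduces the standard argument from those references: the degree count follows correctly from the dimension formula in Section \ref{sec:moduli}, and $\partial_\Ac^2=0$ follows from the identification of the coefficient of each word with the mod-$2$ count of boundary points of a compactified $1$-dimensional moduli space via SFT compactness and gluing. This is correct and is essentially the same approach as the cited sources.
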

The homology of the complex $(\Ac(\La),\partial_\Ac)$ is called the \textit{Legendrian contact homology} of $\La$.

\section{The Rabinowitz DG-bimodule}\label{sec:RFCDG}
%%%%%%%%%%%%%%%%%%%%%%%%%%%%%%%%%%%%%%%%%%%%%%%%%%%%%%
Let $\La_0,\La_1\subset Y$ be Legendrian submanifolds and $\Ac(\La_0)$, $\Ac(\La_1)$ denote their C-E DGAs. We denote $C(\La_0,\La_1)$ the graded $\Z_2$-module generated by chords in $\Rc(\La_0,\La_1)$, and graded with the Conley-Zehnder index. We further  denote $C_{\Ac_1-\Ac_0}(\La_0,\La_1)$ the $\Ac(\La_1)-\Ac(\La_0)$-bimodule generated by chords in $\Rc(\La_0,\La_1)$, i.e. elements of $C_{\Ac_1-\Ac_0}(\La_0,\La_1)$ are linear combination of words $\bs{v}_1\gamma_{01}\bs{v}_0$ where $\bs{v}_i$ are words of Reeb chords of $\La_i$ for $i=0,1$ and $\gamma_{01}\in\Rc(\La_0,\La_1)$. The degree of $\bs{v}_1\gamma_{01}\bs{v}_0$ is given by $\CZ(\gamma_{01})+|\bs{v}_0|_{\Ac_0}+|\bs{v}_1|_{\Ac_1}$. Analogously denote $C_{\Ac_1-\Ac_0}(\La_1,\La_0)$ the $\Ac(\La_1)-\Ac(\La_0)$-bimodule generated by chords in $\Rc(\La_1,\La_0)$. 

The Rabinowitz DG-bimodule $\RFC_{\Ac_1-\Ac_0}(\La_0,\La_1)$ is a DG $\Ac(\La_1)-\Ac(\La_0)$-bimodule which is defined as follows. The underlying graded bimodule has two different types of generators:
\begin{alignat*}{1}
	\RFC_{\Ac_1-\Ac_0}(\La_0,\La_1)=C_+(\La_0,\La_1)\oplus C_-(\La_0,\La_1)
\end{alignat*}
where
\begin{itemize}
	\item $C_+(\La_0,\La_1)$ is the $\Ac(\La_1)-\Ac(\La_0)$-bimodule whose elements are the same as in $C_{\Ac_1-\Ac_0}(\La_1,\La_0)$ but the grading of a mixed chord $\gamma_{10}$ is modified by taking the negative and adding $n$, i.e. $|\gamma_{10}|_{\RFC(\La_0,\La_1)}=n-\CZ(\gamma_{10})$.
	\item $C_-(\La_0,\La_1)=C_{\Ac_1-\Ac_0}(\La_0,\La_1)$.
\end{itemize}

The differential on $\RFC_{\Ac_1-\Ac_0}(\La_0,\La_1)$ is given by a lower triangular $2\times2$-matrix
$$\mfm_1=\begin{pmatrix}
	\bs{\D}_1^{++}&0\\
	\ba_1^{-+}&\ba_1^{--}
\end{pmatrix}$$
for which we describe the components on generators of $\RFC_{\Ac_1-\Ac_0}(\La_0,\La_1)$. The map $\bs{\D}_1^{++}:C_+(\La_0,\La_1)\to C_+(\La_0,\La_1)$ on generators is given by the Legendrian contact homology differential of $\La_0\cup\La_1$ restricted to mixed chords from $\La_0$ to $\La_1$, namely for a chord $\gamma_{10}\in\Rc(\La_1,\La_0)$ we have:
\begin{alignat*}{1}
	\bs{\D}_1^{++}(\gamma_{10})=\sum\limits_{\beta_{10},\bs{\delta}_0,\bs{\delta}_1}\#\cM^0_{\La_{01}}(\beta_{10};\bd_0,\gamma_{10},\bd_1)\cdot\bd_1\beta_{10}\bd_0
\end{alignat*}
where the sum is over all mixed chords $\beta_{01}\in\Rc(\La_0,\La_1)$ and all words of pure Reeb chords $\bd_i$ of $\La_i$ for $i=0,1$. Then, we extend it to bimodule elements $\bs{v}_1\gamma_{10}\bs{v}_0$ by:
\begin{alignat*}{1}
	\bs{\D}_1^{++}(\bs{v}_1\gamma_{10}\bs{v}_0)=\partial_{\Ac(\La_1)}(\bs{v}_1)\gamma_{10}\bs{v}_0+\bs{v}_1\bs{\D}_1^{++}(\gamma_{10})\bs{v}_0+\bs{v}_1\gamma_{10}\partial_{\Ac(\La_0)}(\bs{v}_0)
\end{alignat*}
and then by linearity.
The component $\ba_1^{--}:C_-(\La_0,\La_1)\to C_-(\La_0,\La_1)$ is given by the restriction to mixed chords from $\La_1$ to $\La_0$ of the Legendrian contact cohomology differential:
\begin{alignat*}{1}
	\ba_1^{--}(\gamma_{01})=\sum\limits_{\beta_{01},\bs{\delta}_0,\bs{\delta}_1}\#\cM^0_{\La_{01}}(\beta_{01};\bd_0,\gamma_{01},\bd_1)\cdot\bd_1\beta_{01}\bd_0
\end{alignat*}
We extend it to bimodule elements $\bs{v}_1\gamma_{01}\bs{v}_0$ by:
\begin{alignat*}{1}
	\ba_1^{--}(\bs{v}_1\gamma_{01}\bs{v}_0)=\partial_{\Ac(\La_1)}(\bs{v}_1)\gamma_{01}\bs{v}_0+\bs{v}_1 \ba_1^{--}(\gamma_{01})\bs{v}_0+\bs{v}_1\gamma_{01}\partial_{\Ac(\La_0)}(\bs{v}_0)
\end{alignat*}
Finally, the component $\ba_1^{-+}:C_+(\La_0,\La_1)\to C_-(\La_0,\La_1)$ is the \textit{banana map} defined on generators by:
\begin{alignat*}{1}
	\ba_1^{-+}(\gamma_{10})=\sum\limits_{\beta_{01},\bs{\delta}_0,\bs{\delta}_1}\#\cM^0_{\La_{01}}(\beta_{01};\bd_0,\gamma_{10},\bd_1)\cdot\bd_1\beta_{01}\bd_0
\end{alignat*}
and we extend it to bimodule elements $\bs{v}_1\gamma_{10}\bs{v}_0$ by:
\begin{alignat*}{1}
	\ba_1^{-+}(\bs{v}_1\gamma_{10}\bs{v}_0)=\bs{v}_1 \ba_1^{-+}(\gamma_{10})\bs{v}_0
\end{alignat*}
\begin{prop} We have:
	\begin{enumerate}
		\item $\mfm_1$ is a degree $1$ map, and
		\item $\RFC_{\Ac_1-\Ac_0}(\La_0,\La_1)$ is a DG $\Ac(\La_1)-\Ac(\La_0)$-bimodule; i.e. $\mfm_1^2=0$.
	\end{enumerate}
\end{prop}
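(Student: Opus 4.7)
The plan is to treat the two claims separately. The grading statement reduces to a direct computation from the dimension formulas of Section~\ref{sec:moduli} applied to rigid moduli spaces, while the identity $\mfm_1^2 = 0$, after expanding the lower-triangular matrix product, decomposes into the three relations
\begin{alignat*}{1}
	(\bs{\D}_1^{++})^2 = 0, \quad (\ba_1^{--})^2 = 0, \quad \ba_1^{-+} \bs{\D}_1^{++} + \ba_1^{--} \ba_1^{-+} = 0,
\end{alignat*}
each of which follows from SFT compactness of an appropriate one-dimensional moduli space.

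For the degree check I would treat each component separately. For $\bs{\D}_1^{++}$ on a generator $\gamma_{10} \in C_+$, the relevant moduli space is $\cM^0_{\La_{01}}(\beta_{10}; \bd_0, \gamma_{10}, \bd_1)$ with $\gamma_{10}$ a positive mixed asymptotic, so $j^+ = 1$; setting the dimension equal to zero, the $n$-shifts applied to $\gamma_{10}$ and to the single mixed chord $\beta_{10}$ in the output cancel, and a direct rearrangement of Conley-Zehnder indices gives a degree difference of $1$. The analogous computation works for $\ba_1^{--}$ on $C_-$ (with $j^+ = 0$ and the unshifted grading) and for $\ba_1^{-+}: C_+ \to C_-$, where the $n$-shift of the input $\gamma_{10}$ exactly cancels the $(2-n)j^+$ correction (with $j^+ = 1$) appearing in the dimension formula, again producing a degree-$1$ map. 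The first two identities $(\bs{\D}_1^{++})^2 = 0$ and $(\ba_1^{--})^2 = 0$ are then the standard SFT statement for the link $\La_0 \cup \La_1$: the one-dimensional moduli spaces of discs with two mixed asymptotics of the appropriate types (both in $\Rc(\La_1,\La_0)$ for the first identity, both in $\Rc(\La_0,\La_1)$ for the second) compactify to $1$-manifolds whose boundary buildings are enumerated either by mixed-chord breakings, producing the self-composition of the map, or by pure-chord sub-disc bubblings, reproducing the $\partial_{\Ac(\La_i)}$-contributions arising from the Leibniz extension to bimodule elements. Counting boundary points mod~$2$ gives the vanishing.

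The heart of the proof is the third identity $\ba_1^{-+} \bs{\D}_1^{++} + \ba_1^{--} \ba_1^{-+} = 0$, which I would establish from the SFT compactification of one-dimensional banana moduli spaces $\cM^1_{\La_{01}}(\beta_{01}; \bd_0, \gamma_{10}, \bd_1)$, whose discs carry the two positive mixed asymptotics $\beta_{01}, \gamma_{10}$ together with pure negative asymptotics $\bd_0, \bd_1$. A two-level boundary building with a mixed-chord breaking at some chord $\eta$ falls into one of two classes according to the direction of $\eta$: if $\eta \in \Rc(\La_0, \La_1)$, then the upper level is a rigid strip counted by $\ba_1^{--}$ and the lower level is a rigid banana counted by $\ba_1^{-+}$, together enumerating $\ba_1^{--} \ba_1^{-+}(\gamma_{10})$; if instead $\eta \in \Rc(\La_1, \La_0)$, then the upper level is a rigid strip counted by $\bs{\D}_1^{++}$ and the lower level is again a rigid banana counted by $\ba_1^{-+}$, enumerating $\ba_1^{-+} \bs{\D}_1^{++}(\gamma_{10})$. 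Breakings at a pure negative $\delta \in \bd_i$ account for the $\partial_{\Ac(\La_i)}$-terms produced by the Leibniz extension of $\ba_1^{--}$. Matching each boundary point with such a building and summing mod~$2$ yields the identity. The main obstacle is the exhaustive enumeration of the boundary strata of the banana moduli space: a parity argument on the boundary arcs rules out configurations in which both positive asymptotics lie on the same sub-level (which would force a sub-disc with an odd number of mixed asymptotics, impossible on a two-component link), while the absence of interior bubbling is guaranteed by the transversality results of~\cite{DR, EES1} together with the standing assumptions that $c_1(P) = 0$ and that the Maslov number of $\La$ vanishes.
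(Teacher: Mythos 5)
Your proposal is correct and follows essentially the same route as the paper: the degree claim is read off from the dimension formulas of Section \ref{sec:moduli}, and $\mfm_1^2=0$ is obtained from the boundaries of the three one-dimensional moduli spaces of strips and bananas, which is precisely the list the paper gives (your identification of the three matrix-entry relations and of which two-level buildings contribute to each is accurate). The only inessential imprecisions are that your parity argument excludes only an odd number of mixed connecting chords, the case of two mixed connecting chords being ruled out instead by the index count for two-story buildings with one nontrivial component per story, and that the absence of bubbling follows from exactness of the Liouville form and of the Lagrangian cylinders rather than from $c_1(P)=0$ or the Maslov condition, which only serve to fix the $\Z$-grading.
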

\begin{proof}
	(1) is obtained by the dimension formula of the moduli spaces, see Section \ref{sec:moduli}.
	(2) is obtained by considering the algebraic contributions of pseudo-holomorphic buildings arising in the boundary of $1$ dimensional moduli spaces of the following type:
	\begin{itemize}
	\item $\cM^1_{\La_{01}}(\beta_{10};\bd_0,\gamma_{10},\bd_1)$: strips with a positive asymptotic at $\gamma_{10}$, a negative asymptotic at $\beta_{10}$ and negative pure asymptotics at the words $\bs{\delta}_0$ and $\bs{\delta}_1$,
	\item $\cM^1_{\La_{01}}(\beta_{01};\bd_0,\gamma_{01},\bd_1)$: strips with a positive asymptotic at $\beta_{10}$ and negative asymptotic at $\gamma_{10}$,
	\item $\cM^1_{\La_{01}}(\beta_{01};\bd_0,\gamma_{10},\bd_1)$: bananas with positive asymptotics at $\beta_{10}$ and  $\gamma_{10}$.
	\end{itemize}
\end{proof}
Equivalently we have that $(C_+(\La_0,\La_1),\bs{\D}_1^{++})$ and $(C_-(\La_0,\La_1),\ba_1^{--})$ are DG $\Ac(\La_1)$-$\Ac(\La_0)$-bimodules, and $\RFC_{\Ac_1-\Ac_0}(\La_0,\La_1)$ is the cone of the DG-bimodule map $\ba_1^{-+}:C_+(\La_0,\La_1)[1]\to C_-(\La_0,\La_1)$.

\begin{nota}
	We suppress the exponents ``$++$'', ``$-+$'' and ``$--$'' indicating the output and input of each component of the differential, and denote $\bs{\D}_1:=\bs{\D}_1^{++}$ and $\ba_1:=\ba_1^{-+}+\ba_1^{--}$. We will write explicitly, when needed, if we consider some restriction of the map $\ba_1$.
\end{nota}

\begin{rem}
	In case the C-E DGAs $\Ac(\La_0)$ and $\Ac(\La_1)$ admit augmentations (see \cite{Che}) $\ep_0$ and $\ep_1$ respectively over $\Z_2$, we can use them to turn the bimodule coefficients into elements of $\Z_2$ and thus get a $\Z_2$-module $\RFC(\La_0,\La_1)$. This latter module corresponds to the one defined in \cite{L2} in the case where the pair of Lagrangian cobordisms is a pair of trivial cylinders. Moreover, if $\La_1$ is a copy of $\La_0$, this complex is isomorphic to the complex of the $2$-copy described in \cite{EESa}.
\end{rem}
We recall now a sufficient condition for the Rabinowitz complex to be acyclic.
\begin{defi}
	A pair of Legendrians $(\La_0,\La_1)$ in $Y=P\times\R$ is said to be \textit{horizontally displaceable} if there is a Hamiltonian isotopy $\varphi_t$ of $P$ such that $\Pi_P(\La_0)\cap\varphi_1(\Pi_P(\La_1))=\emptyset$, where $\Pi_P:Y\to P$ is the projection.
\end{defi}
\begin{teo}[\cite{L2}]\label{teo:acyclicity}
	If $(\La_0,\La_1)$ is horizontally displaceable, then $\RFC_{\Ac_1-\Ac_0}(\La_0,\La_1)$ is acyclic.
\end{teo}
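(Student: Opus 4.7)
The plan is to deduce acyclicity by combining an invariance property of the Rabinowitz DG-bimodule under Legendrian isotopies of the pair $(\La_0,\La_1)$ with a direct vanishing for a suitably displaced configuration. The horizontal displaceability hypothesis will be used to construct an isotopy from $(\La_0,\La_1)$ to a pair whose Rabinowitz bimodule is literally the zero bimodule.

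First I would establish the following invariance statement: for any generic smooth path $(\La_0^s,\La_1^s)_{s\in[0,1]}$ of Legendrian pairs whose links stay non-degenerate, the DG-bimodules $\RFC_{\Ac_1^0-\Ac_0^0}(\La_0^0,\La_1^0)$ and $\RFC_{\Ac_1^1-\Ac_0^1}(\La_0^1,\La_1^1)$ are quasi-isomorphic, after identifying the Chekanov-Eliashberg algebras at the two endpoints via their own invariance maps. This follows the template already carried out in \cite{L2} for the augmented $\Z_2$-version. Concretely, a Legendrian isotopy gives rise to an exact Lagrangian concordance $\Sigma$ in $\R\times Y$ interpolating between the trivial cylinders at $s=0$ and $s=1$, and the continuation maps are defined by counts of pseudo-holomorphic discs with boundary on $\Sigma$ having one positive mixed puncture, one negative mixed puncture, and words of negative pure punctures at the concave ends. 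Because the almost-complex structure is cylindrical at both ends, the pure asymptotic words are compatible with the outer bimodule action of $\Ac(\La_0^s)$ and $\Ac(\La_1^s)$, so these maps are genuine DG-bimodule morphisms. A standard neck-stretching argument in a $1$-parameter family yields DG-bimodule homotopies showing that the composition of the continuation maps for a path and its reverse is chain homotopic to the identity.

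Next I would use the hypothesis to build an isotopy ending in a trivial configuration. By assumption there exists a compactly supported Hamiltonian isotopy $\varphi_t$ of $P$ with $\Pi_P(\La_0)\cap\varphi_1(\Pi_P(\La_1))=\emptyset$. This lifts to a contact isotopy of $Y=P\times\R$ realising a Legendrian isotopy of $\La_1$ to some $\widetilde{\La}_1$ whose Lagrangian projection is disjoint from that of $\La_0$. Since mixed Reeb chords of $\La_0\cup\widetilde{\La}_1$ correspond bijectively to intersection points of $\Pi_P(\La_0)\cap\Pi_P(\widetilde{\La}_1)$, there are no mixed Reeb chords; hence $C_+(\La_0,\widetilde{\La}_1)=C_-(\La_0,\widetilde{\La}_1)=0$ and $\RFC_{\Ac_1-\Ac_0}(\La_0,\widetilde{\La}_1)=0$. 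Combined with Step~1 this yields the claimed acyclicity.

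The main obstacle I expect is verifying that the continuation maps and chain homotopies produced in Step~1 respect the full DG-bimodule structure, i.e.\ intertwine both the differentials and the outer action of $\Ac(\La_0)$ and $\Ac(\La_1)$. This amounts to a careful accounting, in each SFT-type compactness argument, of how pure Reeb chord words split between the top and bottom levels of broken buildings; it must also accommodate bifurcations (births and deaths of mixed chords) that occur along generic paths of Legendrians, which should be handled by a Chekanov-style stabilisation/handle-slide argument so that the induced bimodule quasi-isomorphism class is independent of the chosen isotopy. A secondary point is ensuring that the lift of $\varphi_t$ to a contact isotopy of $Y$ stays within the class of Legendrian isotopies for which Step~1 applies, which can be arranged by composing with a large Reeb translation if necessary to push any newly created mixed chords to positive action and then cancel them against bifurcations in the path.
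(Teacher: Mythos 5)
Your argument is essentially the one the paper relies on: Theorem \ref{teo:acyclicity} is not reproved here but imported from \cite{L2}, where acyclicity is obtained exactly by combining invariance of the Rabinowitz complex under Legendrian isotopy (transfer maps defined by disc counts on the exact Lagrangian concordances traced out by the isotopy) with the observation that once the Lagrangian projections are displaced there are no mixed Reeb chords and the complex vanishes; the remark following the theorem records that this extends directly to bimodule coefficients. The two steps of your plan, and the caveats you flag about compatibility of the continuation maps with the outer $\Ac(\La_0)$- and $\Ac(\La_1)$-actions and the identification of the C-E algebras at the endpoints, are precisely the content of that extension, so the proposal matches the intended proof.
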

\begin{rem}
	In \cite{L2}, Theorem \ref{teo:acyclicity} is proved for the $\Z_2$-module $\RFC(\La_0,\La_1)$ but the proof extends directly to the bimodule case.
\end{rem}

Assume that $\La_1$ is a perturbation of $\La_0$ by a small negative Morse function, see Section \ref{sec:2copy} below. If the perturbation is sufficiently small then by invariance of the C-E algebra \cite{EES2} (and the fact that the almost complex structure on $\R\times Y$ is the cylindrical lift of an admissible almost complex structure on $P$, see \cite{DR}), the DGAs $\Ac(\La_0)$ and $\Ac(\La_1)$ are canonically isomorphic in the sense that there is a canonical identification of Reeb chords of $\La_0$ with Reeb chords of $\La_1$ such that the differentials coincide under this identification. In this case, we denote the Rabinowitz bimodule $(\RFC_{\Ac-\Ac}(\La_0,\La_1),\mfm_1)$ with $\Ac:=\Ac(\La_0)=\Ac(\La_1)$. It is a DG $\Ac$-bimodule.

\section{The Calabi-Yau structure}\label{sec:CYDGA}
%%%%%%%%%%%%%%%%%%%%%%%%%%%%%%%%%%%%%%%%%%%%%%%%%%%%%%%%%%%%%%%%%%%%%%%%%%%%%%%%%%%%%%

The goal of this section is to prove the following theorem:
\begin{teo}\label{teo:CY}
	The C-E algebra $\Ac(\La)$ of an horizontally displaceable Legendrian sphere $\La\subset Y$ is a $(n+1)$-Calabi-Yau differential graded algebra.
\end{teo}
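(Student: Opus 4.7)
The plan is to realize the Calabi-Yau quasi-isomorphism $\Ac\to\Ac^![-n-1]$ as a shift of the DG-bimodule map $\CY:\widehat{C}_+(\La_0,\La_1)\to\widecheck{C}_-(\La_0,\La_1)$ previewed in the introduction, where $\La_0=\La$ and $\La_1$ is a small negative Morse push-off of $\La$. Granted this, the theorem reduces to four subclaims: (i) $\Ac$ is homologically smooth; (ii) $\widehat{C}_+[-n-1]$ is quasi-isomorphic, as a DG $\Ac$-bimodule, to the diagonal bimodule $\Ac$, via a finite-rank semifree resolution; (iii) $\widecheck{C}_-$ is quasi-isomorphic to $\Ac^!=\RHom_{\Ac-\Ac}(\Ac,\Ac\otimes\Ac)$; and (iv) the resulting map $\CY[-n-1]$ satisfies the self-duality required by Definition \ref{defi:CY}.

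For (i) and (ii) the candidate resolution is $\widehat{C}_+[-n-1]$ itself, which as an $\Ac$-bimodule is semifree and finitely generated by the Reeb chords from $\La_0$ to $\La_1$ (in bijection with $\Rc(\La)$) together with the maximum $m$ of the Morse function on $\La$; the filtration by word length of mixed chords exhibits the semifree structure. The augmentation sending $m\mapsto 1$ and killing the remaining generators is a quasi-isomorphism by a Morse-theoretic computation on the 2-copy, which then doubles as a proof of homological smoothness. For (iii), applying $\Hom_{\Ac-\Ac}(-,\Ac\otimes\Ac)$ with the inner bimodule structure to this resolution produces a complex whose generators are canonically dual to those of $\widehat{C}_+[-n-1]$: a chord $\gamma_{10}\in\Rc(\La_1,\La_0)$ dualizes to the companion chord $\gamma_{01}\in\Rc(\La_0,\La_1)$ of the 2-copy, and $m$ dualizes to the minimum $m^\vee$. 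Compatibility of the dual differential with the Legendrian contact cohomology differential defining $\widecheck{C}_-$ is a combinatorial check from the dimension and action formulas of Section \ref{sec:moduli}.

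To extract the quasi-isomorphism, I would verify that the cone of $\CY$ is quasi-isomorphic to the Rabinowitz bimodule $\RFC_{\Ac-\Ac}(\La_0,\La_1)$ --- the two complexes differ only by a regrouping of $m$ and $m^\vee$ across the cone, corresponding to the adjustment of the action filtration announced in the introduction. Since $(\La_0,\La_1)$ is horizontally displaceable, Theorem \ref{teo:acyclicity} yields that $\RFC_{\Ac-\Ac}(\La_0,\La_1)$ is acyclic, hence $\CY$ is a quasi-isomorphism, and the shift $\CY[-n-1]:\Ac\xrightarrow{\simeq}\Ac^![-n-1]$ is the candidate Calabi-Yau map.

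The main obstacle is (iv). The natural source of the required symmetry is the $\Z/2$-involution on banana moduli spaces $\cM_{\La_{01}}(\beta_{01};\bd_0,\gamma_{10},\bd_1)$ that exchanges the two positive mixed asymptotics $\beta_{10}$ and $\gamma_{10}$; under the bijections from (ii) and (iii) this involution should match the derived dualization of bimodule morphisms on the Reeb chord generators. The delicate points are the careful tracking of the outer bimodule structure on $\widehat{C}_+^![-n-1]$ against that on $\widecheck{C}_-$, and the treatment of the $m$ and $m^\vee$ components, which lack the obvious $\Z/2$ symmetry and must be matched via an auxiliary Morse-theoretic argument on $\La$ together with a count of disks with one boundary puncture at $m$ or $m^\vee$. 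This chain-level verification of $\CY\simeq\CY^![-n-1]$ is expected to be the technical heart of the proof.
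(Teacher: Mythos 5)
Your proposal follows essentially the same route as the paper's proof: your steps (i)--(iii) are realized by the paper's explicit maps $F\colon\widehat{C}_+(\La_0,\La_1)[-n-1]\to\Ac$ and $G\colon\widecheck{C}_-(\La_0,\La_1)\to\Hom_{\Ac-\Ac}\big(\widehat{C}_+(\La_0,\La_1)[-n-1],\Ac\otimes\Ac\big)$, the cone comparison with $\RFC_{\Ac-\Ac}(\La_0,\La_1)$ together with Theorem \ref{teo:acyclicity} gives that $\CY$ is a quasi-isomorphism, and step (iv) is verified exactly as you predict, by a symmetry of banana counts for long chords plus a separate Morse-theoretic (evaluation-map) argument for the $x_{01}$ and $y_{01}$ components. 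Two minor imprecisions: the semifree filtration must be taken by action rather than by word length of mixed chords (each free generator contains exactly one mixed chord and is not a cycle, so the length filtration degenerates), and the banana ``involution'' is not an involution of a single moduli space but an identification of the two distinct moduli spaces with positive punctures at $\{\beta_{01},\gamma_{10}\}$ and at $\{\gamma_{01},\beta_{10}\}$, obtained by translating $\La_1$ far in the Reeb direction and applying the correspondence of \cite{EESa} with strips on $\R\times\La$.
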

In order to prove the theorem, we want to find a quasi-isomorphism $\Ac\to\RHom_{\Ac-\Ac}(\Ac,\Ac\otimes\Ac)$ of DG $\Ac$-bimodules. We will start by describing DG-bimodules $\widehat{C}_+(\La_0,\La_1)$ and $\widecheck{C}_-(\La_0,\La_1)$ which are respectively a quotient bimodule and a sub-bimodule of $\RFC_{\Ac_1-\Ac_0}(\La_0,\La_1)$ for a $2$-copy $\La_0\cup\La_1$ of $\La$. We show then that these bimodules are quasi-isomorphic to $\Ac$ and $\RHom_{\Ac-\Ac}(\Ac,\Ac\otimes\Ac)$ respectively (with some degree shifts). Finally, we define a DG-bimodule morphism $\CY:\widehat{C}_+(\La_0,\La_1)\to \widecheck{C}_-(\La_0,\La_1)$ and show that the cone of this morphism is quasi-isomorphic to the DG-bimodule $\RFC_{\Ac-\Ac}(\La_0,\La_1)$. By acyclicity of $\RFC_{\Ac-\Ac}(\La_0,\La_1)$, $\CY$ is a quasi-isomorphism and thus we get the sought quasi-isomorphism $\Ac\to\RHom_{\Ac-\Ac}(\Ac,\Ac\otimes\Ac)$.

\subsection{The $2$-copy $\La_0\cup\La_1$}\label{sec:2copy}
%%%%%%%%%%%%%%%%%%%%%%%%%%%%%%%%%%%%%%%%%%
In all this Section \ref{sec:CYDGA} we will assume that $\La\subset Y$ is an $n$-dimensional Legendrian sphere. Let $f\colon\La\to\R$ be a $\Cc^1$-small negative Morse function with exactly one maximum and one minimum, such that the norm of $f$ is much smaller than the length of the shortest Reeb chord of $\La$. Let $\La_1$ denote the $1$-jet of $f$ in a standard neighborhood of $\La$ (identified with a neighborhood of the $0$-section in $J^1(\La)$) and denote $\La_0:=\La$. Then we say that $\La_0\cup\La_1$ is a $2$-copy of $\La$.

Each Reeb chord $\gamma$ of $\La$ gives rise to two mixed Reeb chords of $\La_0\cup\La_1$: $\gamma_{01}\in\Rc(\La_0,\La_1)$ and $\gamma_{10}\in\Rc(\La_1,\La_0)$. The choice of capping paths in order to define the Conley-Zehnder index of mixed chords is made in such a way that $\CZ(\gamma_0)=\CZ(\gamma_{01})=\CZ(\gamma_{10})$.
The Legendrian $\La_0\cup\La_1$ admits two additional mixed Reeb chords from $\La_1$ to $\La_0$ corresponding to the critical points of $f$ and called \textit{Morse chords}. We denote $y_{01}$ the Reeb chord corresponding to the maximum of $f$ and by $x_{01}$ the one corresponding to the minimum. Note that $\ell(y_{01})<\ell(x_{01})$ because the function $f$ is negative, and so we will call $y_{01}$ the \textit{minimum Morse Reeb chord} and $x_{01}$ the \textit{maximum Morse Reeb chord}. Note that $\CZ(y_{01})=0$ while $\CZ(x_{01})=n$. 
Finally, we denote $C_{\Ac-\Ac}^{l}(\La_0,\La_1)$ the $\Ac(\La_1)-\Ac(\La_0)$-bimodule (or $\Ac$-bimodule for short, as the algebras $\Ac(\La_0)$ and $\Ac(\La_1)$ are canonically identified) generated by \textit{long chords} from $\La_1$ to $\La_0$, i.e. chords from $\La_1$ to $\La_0$ which are not Morse chords.

\subsection{The bimodules $\widehat{C}_+(\La_0,\La_1)$ and $\widecheck{C}_-(\La_0,\La_1)$.}
%%%%%%%%%%%%%%%%%%%%%%%%%%%%%%%%%%%%%%%%%%%%%%%%%%%%%%%%%%%%%%%%%%%%%%%%%%%%%%%%%%%%%%%%%%%%%
The DG $\Ac$-bimodule $(\widehat{C}_+(\La_0,\La_1),\widehat{\mfm}_1)$ is defined as follows. The underlying graded bimodule is generated by the positive action generators of the Rabinowitz bimodule and the maximum Morse chord:
\begin{alignat*}{1}
	\widehat{C}_+(\La_0,\La_1)=C_+(\La_0,\La_1)[1]\oplus\langle x_{01}\rangle_{\Ac-\Ac}[1]
\end{alignat*}
where $\langle x_{01}\rangle_{\Ac-\Ac}$ denotes the $\Ac$-sub-bimodule of $C_{\Ac-\Ac}(\La_0,\La_1)$ generated by $x_{01}$. So the chord $x_{01}$ in $\langle x_{01}\rangle_{\Ac-\Ac}[1]$ has degree $n+1=\CZ(x_{01})+1$. The differential $\widehat\mfm_1$ is given on generators by
\begin{alignat*}{1}
	&\widehat{\mfm}_1(\gamma_{10})=\bs{\D}_1(\gamma_{10})+\gamma_1 x_{01}+x_{01}\gamma_0\\
	&\widehat{\mfm}_1(x_{01})=0
\end{alignat*}
where $\gamma_i$ denotes the pure chord of $\La_i$ corresponding to $\gamma_{10}$.
\begin{prop}\label{prop:widehatDelta}
	The bimodule $(\widehat{C}_+(\La_0,\La_1),\widehat{\mfm}_1)$ is a semifree DG-bimodule.
\end{prop}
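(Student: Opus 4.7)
The plan is to exhibit a finite-rank semifree filtration of $\widehat{C}_+(\La_0,\La_1)$ using the action filtration on the mixed Reeb chord generators. Enumerate the long chords $\gamma_{10}^{(1)},\dots,\gamma_{10}^{(N)}$ in order of strictly increasing action, set $F_0 := \langle x_{01}\rangle_{\Ac-\Ac}[1]$, and for $1\le i\le N$ set
\begin{alignat*}{1}
F_i := F_{i-1} + \Ac\cdot\gamma_{10}^{(i)}\cdot\Ac.
\end{alignat*}
Then $F_N = \widehat{C}_+(\La_0,\La_1)$, so this is automatically a finite filtration. The three things to verify are: (i) each $F_i$ is a DG-sub-bimodule; (ii) $F_0$ is a free DG-bimodule; (iii) each quotient $F_i/F_{i-1}$ for $i\ge 1$ is a free DG-bimodule.

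Point (i) is the heart of the matter. From the formula
\begin{alignat*}{1}
\widehat{\mfm}_1(\gamma_{10}^{(i)}) = \bs{\D}_1(\gamma_{10}^{(i)}) + \gamma_1^{(i)} x_{01} + x_{01}\gamma_0^{(i)},
\end{alignat*}
the two Morse terms lie in $F_0\subseteq F_{i-1}$ by definition. For the first term I would invoke the action inequality from Section \ref{sec:moduli}: any nonconstant disc counted in $\bs{\D}_1(\gamma_{10}^{(i)})$ contributes a word $\bd_1\beta_{10}\bd_0$ whose mixed output $\beta_{10}$ must satisfy $\mathfrak{a}(\beta_{10})<\mathfrak{a}(\gamma_{10}^{(i)})$, since pure chord asymptotics have strictly negative action in the chosen sign convention and the constant strip $\beta_{10}=\gamma_{10}^{(i)}$ with empty pure asymptotic words is killed by the quotient by the $\R$-action. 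Hence $\bs{\D}_1(\gamma_{10}^{(i)})\in F_{i-1}$, and the Leibniz rule extends this to arbitrary bimodule elements, giving $\widehat{\mfm}_1(F_i)\subseteq F_i$.

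Point (ii) is immediate: $F_0\cong \Ac\otimes\langle x_{01}\rangle\otimes\Ac$, and $\widehat{\mfm}_1(x_{01})=0$ so the generator is a cycle, which is exactly the condition for freeness in the sense of \cite{FHT}. For point (iii), as a graded bimodule $F_i/F_{i-1}\cong \Ac\otimes\langle\gamma_{10}^{(i)}\rangle\otimes\Ac$, and the induced differential sends the class of $\gamma_{10}^{(i)}$ to zero by the computation in (i); so the quotient is free as a DG-bimodule.

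The main obstacle is the strict action-decreasing property of $\bs{\D}_1$ needed in (i): merely knowing $\bs{\D}_1$ is action non-increasing would only give $\bs{\D}_1(\gamma_{10}^{(i)})\in F_i$, not $F_{i-1}$. Ruling out the ``no pure asymptotic, $\beta_{10}=\gamma_{10}^{(i)}$'' case requires invoking the $\R$-quotient that removes trivial strips, and the fact that pure chord contributions strictly decrease action rests on the sign convention $\mathfrak{a}(\gamma_{ii})=-\int_\gamma\alpha$ recorded at the end of Section \ref{sec:moduli}.
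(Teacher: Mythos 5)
Your treatment of the semifree filtration is essentially the paper's own argument: the paper also filters by action, with $F_0$ generated by $x_{01}$ (which has negative action) and the long chords $\gamma_{10}^{j}$ added in order of increasing (positive) action, and your extra care about strictness of the action drop and about the quotients being generated by cycles is correct and consistent with the energy inequalities of Section \ref{sec:moduli}.

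However, there is a genuine gap: you never verify that $\widehat{\mfm}_1^2=0$, i.e.\ that $(\widehat{C}_+(\La_0,\La_1),\widehat{\mfm}_1)$ is a DG-bimodule in the first place, and this is where almost all of the paper's proof lives. The point is that $\widehat{\mfm}_1$ is not $\bs{\D}_1$ (whose square is already known to vanish); it carries the extra terms $\gamma_1x_{01}+x_{01}\gamma_0$. Expanding on a generator one finds
\begin{alignat*}{1}
(\widehat{\mfm}_1)^2(\gamma_{10})=(\bs{\D}_1)^2(\gamma_{10})+\partial_{\Ac}(\gamma_1)x_{01}+x_{01}\partial_{\Ac}(\gamma_0)+\sum\limits_{\bd_1\beta_{10}\bd_0\in\bs{\D}_1(\gamma_{10})}\bd_1(\beta_1 x_{01}+x_{01}\beta_0)\bd_0,
\end{alignat*}
and the vanishing of the last three terms is not formal: the paper proves it by identifying them with the algebraic count of the boundary of compactified $1$-dimensional moduli spaces of bananas with positive asymptotics at $\gamma_{10}$ and at $x_{01}$, using Lemma \ref{lem:bananas} (2) to match each rigid banana with positive puncture at $x_{01}$ to the two insertions $\beta_1x_{01}$ and $x_{01}\beta_0$, and Lemma \ref{lem:bananas} (3) to kill the buildings broken along a strip emanating from $x_{01}$. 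Your filtration argument shows that $\widehat{\mfm}_1$ strictly decreases the filtration level on generators, but that only forces $\widehat{\mfm}_1^2$ to decrease it by two, not to vanish; so the DG property cannot be extracted from the filtration and must be supplied by this moduli-space argument, which your proposal omits entirely.
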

We state here a lemma that we will use repeatedly in several proofs.
\begin{lem}\cite{EESa}\label{lem:bananas} For a $2$-copy $\La_0\cup\La_1$ of $\La_0$ we have:
	\begin{enumerate}
		\item For every Reeb chord $\gamma_{01}\in C^{l}(\La_0,\La_1)$, there are exactly two rigid pseudo-holomorphic strips with positive asymptotic at $\gamma_{01}$ and negative asymptotic at the minimum Reeb chord $y_{01}$. Moreover, each of these strips has exactly one pure negative chord asymptotic which is the chord $\gamma_0$ for one strip and $\gamma_1$ for the other (where $\gamma_i$ denotes the pure Reeb chord of $\La_i$ corresponding to $\gamma_{01}$).
		\item For every Reeb chord $\gamma_{10}\in \Rc(\La_1,\La_0)$, there are exactly two rigid pseudo-holomorphic discs with boundary on $\R\times(\La_0\cup\La_1)$ which are bananas with positive asymptotics at $\gamma_{10}$ and at the maximum Reeb chord $x_{01}$. Moreover, each of these bananas has exactly one pure negative chord asymptotic which is the chord $\gamma_0$ for one banana and $\gamma_1$ for the other.
		\item The count of rigid pseudo-holomorphic strips with boundary on $\R\times(\La_0\cup\La_1)$ admitting a positive puncture at the maximum $x_{01}$ and a negative puncture at a chord $\beta_{01}$, vanishes.
	\end{enumerate}
\end{lem}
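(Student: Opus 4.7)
My approach is to reduce the enumeration to a count of rigid gradient flow trees on $\La$ via Ekholm's correspondence \cite{Ekholm:trees}, which applies because $f$ is taken $C^1$-small relative to the length of the shortest Reeb chord of $\La$. Under this correspondence, rigid $J$-holomorphic discs on $\R\times(\La_0\cup\La_1)$ biject with rigid generalized flow trees on $\La$: objects built from trivial-strip pieces over Reeb chords of $\La$ whose boundary arcs are glued along gradient flow segments of $\pm f$. A long mixed asymptote ($\gamma_{01}$ or $\gamma_{10}$) corresponds to a trivial strip over the underlying chord $\gamma$, a Morse asymptote (at $x_{01}$ or $y_{01}$) corresponds to the endpoint of a flow segment at a critical point of $f$ of matching index, and a pure asymptote $\gamma_i$ corresponds to the opposite end of such a trivial strip, constrained to lie on the copy $\La_i$.

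Granted the correspondence, parts (1) and (2) can be handled in parallel. In each case the asymptotic data force the underlying tree to be a single trivial strip at $\gamma$ decorated by exactly one flow segment emanating from a boundary point of that strip: for (1) this segment terminates at the maximum of $f$, yielding the negative $y_{01}$ asymptote, while for (2) it terminates at the minimum, yielding the positive $x_{01}$ asymptote. In both cases the unique pure asymptote is the end of $\gamma$ opposite to the one absorbed by the long mixed chord, lying on whichever copy $\La_i$ carries the boundary arc from which the flow segment departs. The two possible choices of this arc (one on $\La_0$, one on $\La_1$) produce exactly two rigid configurations, distinguished by whether the pure asymptote is $\gamma_0$ or $\gamma_1$. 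Rigidity is then confirmed by a direct dimension computation using the formulas of Section \ref{sec:moduli}.

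For (3) I would combine positivity of energy with the dimension formula. The inequality $\ell(x_{01})\geq\ell(\beta_{01})$ given by positivity of energy excludes any long chord $\beta_{01}$, since $\ell(x_{01})\approx\|f\|_{C^0}$ is by hypothesis much smaller than any Reeb chord length of $\La$. The only remaining candidate is the Morse chord $\beta_{01}=y_{01}$, for which the dimension formula yields $\CZ(x_{01})-\CZ(y_{01})-1=n-1$; this is strictly positive for $n\geq 2$, so no rigid discs exist. When $n=1$, a direct flow-tree count produces two rigid configurations (one for each ascending flow segment from minimum to maximum of $f$ on $S^1$), and these cancel mod $2$.

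The main obstacle is justifying the flow-tree correspondence in the present setting, which requires a delicate gluing and index analysis to ensure that every rigid $J$-holomorphic disc arises uniquely from a rigid flow tree of the described form and that no spurious configurations appear. This is essentially the content of the relevant portions of \cite{EESa, Ekholm:trees}; once it is invoked as a black box, the three claims reduce to the explicit enumerations and the action/dimension arguments described above.
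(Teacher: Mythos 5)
Your proposal is correct and follows essentially the same route as the paper: both reduce to the correspondence of \cite[Theorem 3.6]{EESa} (equivalently, Ekholm's flow trees) identifying rigid discs on the $2$-copy with a constant strip over the underlying chord decorated by a single gradient flow segment, with the two attachment points of that segment accounting for the two configurations in (1) and (2). Your treatment of (3) — action forcing $\beta_{01}$ to be a Morse chord, then no rigid configurations for $n\geq2$ and two cancelling flow lines for $n=1$ — is likewise the paper's argument, just phrased via the Conley--Zehnder dimension formula instead of the dimension of the space of flow lines.
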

\begin{proof} 
	By \cite[Theorem 3.6]{EESa}, rigid pseudo-holomorphic strips with boundary on $\R\times(\La_0\cup\La_1)$, a positive asymptotic at $\gamma_{01}$ and a negative asymptotic at the minimum Reeb chord $y_{01}$ (which corresponds to the maximum of the function $f$) correspond to rigid \textit{generalized discs} which consist of a pseudo-holomorphic disc with boundary on $\R\times\La_0$ with a positive asymptotic at $\gamma_0$ and a negative gradient flow line of $f$ from the maximum critical point to a point on the boundary of the disc. By rigidity, the pseudo-holomorphic disc must be a constant strip $\R\times\gamma_0$ and then there are two ways the flow line can be attach to it (either on $\R\times\{\text{starting point of }\gamma_0\}$, or on $\R\times\{\text{ending point of }\gamma_0\}$), giving after identification te two strips in (1).
	
	For (2), any rigid banana with positive asymptotics at $\gamma_{10}$ and at the maximum Reeb chord $x_{01}$ corresponds to a rigid generalized discs consisting of a pseudo-holomorphic disc with boundary on $\R\times\La_0$ with a positive asymptotic at $\gamma_0$ and a negative gradient flow line of $f$ flowing from a point on the boundary of the disc to the minimum critical point (remember $x_{01}$ corresponds to the minimum of $f$). By rigidity again the disc must be constant and the two possible ways the flow line can be attached to it provides after identification the two bananas in (2).
	
	For (3), by action reasons the chord $\beta_{01}$ must be a Morse chord and the strip has no negative pure Reeb chord asymptotics. Thus, \cite[Theorem 3.6]{EESa} tells us that such discs are in bijective correspondence with negative gradient flow lines of $f$ from the critical point corresponding to $\beta_{01}$ to the minimum. But in our case $f$ has only two critical points so there are either no flow line (for degree reasons in case $n\geq2$), or exactly two flow lines (from the maximum to the minimum).
\end{proof}

\begin{proof}[Proof of Proposition \ref{prop:widehatDelta}]
	We compute $(\widehat{\mfm}_1)^2(\gamma_{10})=\widehat{\mfm}_1\big(\bs{\D}_1(\gamma_{10})\big)+\partial_{\Ac}(\gamma_1)x_{01}+x_{01}\partial_{\Ac}(\gamma_0)$. For any term $\bd_1\beta_{10}\bd_0$ appearing in $\bs{\D}_1(\gamma_{10})$, we have
	\begin{alignat*}{1}
		\widehat{\mfm}_1(\bd_1\beta_{10}\bd_0)&=\partial_\Ac(\bd_1)\beta_{10}\bd_0+\bd_1\big(\bs{\D}_1(\beta_{10})+\beta_1 x_{01}+x_{01}\beta_0\big)\bd_0+\bd_1\beta_{10}\partial_\Ac(\bd_{0})\\
		&=\partial_\Ac(\bd_1)\beta_{10}\bd_0+\bd_1\bs{\D}_1(\beta_{10})\bd_0+\bd_1\beta_{10}\partial_\Ac(\bd_{0})+\bd_1(\beta_1 x_{01}+x_{01}\beta_0)\bd_0\\
		&=\bs{\D}_1(\bd_1\beta_{10}\bd_0)+\bd_1(\beta_1 x_{01}+x_{01}\beta_0)\bd_0
	\end{alignat*}
So we have
\begin{alignat*}{1}
	(\widehat{\mfm}_1)^2(\gamma_{10})=(\bs{\D}_1)^2(\gamma_{10})+\partial_{\Ac}(\gamma_1)x_{01}+x_{01}\partial_{\Ac}(\gamma_0)+\sum\limits_{\bd_1\beta_{10}\bd_0\in\bs{\D}_1(\gamma_{10})}\bd_1(\beta_1 x_{01}+x_{01}\beta_0)\bd_0
\end{alignat*}
We know that $(\bs{\D}_1)^2(\gamma_{10})=0$ and it remains to understand why the last three terms vanish.

The boundary of the compactification of $1$-dimensional (after dividing by translation) moduli spaces of bananas with positive asymptotics at $\gamma_{10}$ and $x_{01}$, and pure negative Reeb chord asymptotics consists of broken discs which are $2$-level buildings connected by a Reeb chord. This Reeb chord is either a mixed chord or a pure chord. In the first case, using Lemma \ref{lem:bananas} (3), the only (non-vanishing) possibility is that the upper level of the building is a disc contributing to $\bs{\D}_1(\gamma_{10})$ with a positive asymptotic at $\gamma_{10}$, a negative asymptotic at a chord $\beta_{10}$ and negative pure Reeb chord asymptotics; and the lower level is a rigid banana with positive asymptotic at $\beta_{10}$ and $x_{01}$. By Lemma \ref{lem:bananas} (2) there are two such bananas and so we get two buildings contributing to
$\bd_1(\beta_1 x_{01}+x_{01}\beta_0)\bd_0$, for any $\bd_1\beta_{10}\bd_0\in\bs{\D}_1(\gamma_{10})$.
In the second case, the top level of the building is a rigid banana with positive asymptotics at $\gamma_{10}$ and $x_{01}$. By Lemma \ref{lem:bananas} (2) the pure connecting Reeb chord is either $\gamma_0$ or $\gamma_1$. The lower level of the building is thus a disk contributing to $\partial_\Ac(\gamma_i)$, for $i=0,1$ depending on which one is the connecting pure Reeb chord. This implies that $\partial_{\Ac}(\gamma_1)x_{01}+x_{01}\partial_{\Ac}(\gamma_0)=0$.

The semifree property of $\widehat{C}_+(\La_0,\La_1)$ is deduced directly from the action filtration on chords. Indeed, denote $\gamma_{10}^j$ for $1\leq j\leq k$ the generators of $\widehat{C}_+(\La_0,\La_1)$ which are long chords. Up to relabeling, there are real numbers $\ell_0,\ell_1,\dots,\ell_k$ such that 
$$\mathfrak{a}(x_{01})<\ell_0<0<\mathfrak{a}(\gamma_{10}^1)<\ell_1<\dots<\ell_{k-1}<\mathfrak{a}(\gamma_{10}^k)<\ell_k$$
The $\Z_2$-vector spaces $F_j\widehat{C}_+=\{\gamma\in C_+(\La_0,\La_1)[1]\oplus\langle x_{01}\rangle_{\Z_2}[1]: \mathfrak{a}(\gamma)<\ell_j\}$, for $0\leq j\leq k$, produce a filtration
$$\Ac\otimes F_0\widehat{C}_+\otimes\Ac\subset \Ac\otimes F_1\widehat{C}_+\otimes\Ac\subset\dots\subset \Ac\otimes F_k\widehat{C}_+\otimes\Ac=\widehat{C}_+(\La_0,\La_1)$$
giving that $\widehat{C}_+(\La_0,\La_1)$ is semifree and of finite rank.
\end{proof}

Let us now describe the $\Ac$-bimodule $(\widecheck{C}_-(\La_0,\La_1),\widecheck{\mfm}_1)$. The underlying graded bimodule is:
\begin{alignat*}{1}
	\widecheck{C}_-(\La_0,\La_1)=C_{\Ac-\Ac}^{l}(\La_0,\La_1)\oplus\langle y_{01}\rangle_{\Ac-\Ac}
\end{alignat*}
and as a $\Ac$-sub-bimodule of $C_{\Ac-\Ac}(\La_0,\La_1)$ the elements are graded with the Conley-Zehnder index.
The differential $\widecheck{\mfm}_1$ on generators is given by the restriction to $\widecheck{C}_-(\La_0,\La_1)$ of the map $\ba_1^{--}$ which was defined in Section \ref{sec:RFCDG}. Note that by Lemma \ref{lem:bananas} (1) we have
\begin{alignat*}{1}
	&\widecheck{\mfm}_1(y_{01})=\ba_1^{--}(y_{01})=\sum\limits_{\gamma\in\Rc(\La)}\gamma_1\gamma_{01}+\gamma_{01}\gamma_0
\end{alignat*}
\begin{prop}
	The bimodule $(\widecheck{C}_-(\La_0,\La_1),\widecheck{\mfm}_1)$ is a semifree DG-bimodule.
\end{prop}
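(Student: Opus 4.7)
The plan is to realise $(\widecheck{C}_-(\La_0,\La_1),\widecheck{\mfm}_1)$ as a DG sub-$\Ac$-bimodule of $(C_-(\La_0,\La_1),\ba_1^{--})$. The pair $(C_-(\La_0,\La_1),\ba_1^{--})$ is already a DG-bimodule: the identity $(\ba_1^{--})^2=0$ is the lower-right entry of the equality $\mfm_1^2=0$ established for the Rabinowitz bimodule in the previous section. Consequently, once one checks that $\widecheck{C}_-(\La_0,\La_1)$ is an $\Ac$-sub-bimodule stable under $\ba_1^{--}$, one automatically obtains $\widecheck{\mfm}_1^2=0$ together with the fact that $\widecheck{\mfm}_1$ is a degree one bimodule derivation.

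The only nontrivial verification is thus that $x_{01}$ never appears in $\ba_1^{--}$ applied to a generator of $\widecheck{C}_-$. By the Leibniz rule defining $\ba_1^{--}$ on bimodule products, this reduces to the two single generators: a long chord $\gamma_{01}$ and the minimum Morse chord $y_{01}$. In either case, a term with leading chord $x_{01}$ would come from a rigid element of $\cM^0_{\La_{01}}(x_{01};\bd_0,\gamma_{01},\bd_1)$, respectively $\cM^0_{\La_{01}}(x_{01};\bd_0,y_{01},\bd_1)$, i.e.\ a strip with positive puncture at $x_{01}$ and negative puncture at a chord from $\La_1$ to $\La_0$ lying in $\widecheck{C}_-$. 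The count of such strips vanishes by Lemma \ref{lem:bananas}(3), so $\ba_1^{--}(\widecheck{C}_-)\subset\widecheck{C}_-$.

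For the semifree property I would mirror the action filtration used at the end of the proof of Proposition \ref{prop:widehatDelta}. Positivity of energy for the discs counted by $\ba_1^{--}$ gives $\mathfrak{a}(\beta_{01})<\mathfrak{a}(\gamma_{01})$ strictly whenever $\beta_{01}$ appears in $\ba_1^{--}(\gamma_{01})$, since every pure chord negative asymptotic contributes strictly positive $\alpha$-integral. Moreover, because the $C^0$-norm of $f$ is chosen much smaller than the length of the shortest Reeb chord of $\La$, the action of $y_{01}$ is strictly larger than that of every long chord. Ordering the long chord generators $\gamma_{01}^1,\dots,\gamma_{01}^k$ so that $\mathfrak{a}(\gamma_{01}^1)<\dots<\mathfrak{a}(\gamma_{01}^k)<\mathfrak{a}(y_{01})$ and picking reals $\ell_0<\ell_1<\dots<\ell_k$ separating the generator actions, the subspaces $F_j\widecheck{C}_-=\Ac\otimes\{g\text{ generator}:\mathfrak{a}(g)<\ell_j\}\otimes\Ac$ form a finite DG-filtration whose successive quotients are free $\Ac$-bimodules of rank one, generated by the single added Reeb chord, which is a cycle modulo the previous stage. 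This exhibits $\widecheck{C}_-$ as semifree of finite rank.

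The only substantive obstacle is the invariance claim, which collapses to a single application of Lemma \ref{lem:bananas}(3); the semifree bookkeeping is purely parallel to Proposition \ref{prop:widehatDelta}, so no further difficulty is anticipated.
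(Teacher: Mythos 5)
Your proof is correct, and it reaches the conclusion by a slightly different, more economical route than the paper, though the geometric input is identical. The paper reruns the boundary-degeneration argument: it considers one-dimensional moduli spaces of strips with both mixed asymptotics at generators of $\widecheck{C}_-(\La_0,\La_1)$, notes that the two-level buildings whose connecting mixed chord is $x_{01}$ contribute zero by Lemma \ref{lem:bananas}~(3), and identifies the remaining breakings with $(\widecheck{\mfm}_1)^2$. You instead cite the already-established identity $(\ba_1^{--})^2=0$ on all of $C_-(\La_0,\La_1)$ and reduce the whole statement to the single verification that $\ba_1^{--}$ preserves the sub-bimodule spanned by the long chords and $y_{01}$, i.e.\ that $x_{01}$ never occurs as an output; that vanishing is again exactly Lemma \ref{lem:bananas}~(3) (combined, as in the proof of that lemma, with the action argument excluding pure negative asymptotics and excluding long chords as negative puncture altogether). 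So the same lemma does the same work in both arguments, but your packaging avoids repeating the compactness analysis, at the cost of making the cancellation less visible at the level of broken buildings. One minor imprecision in your filtration step: the strict inequality $\mathfrak{a}(\beta_{01})<\mathfrak{a}(\gamma_{01})$ for outputs of $\ba_1^{--}$ is not forced by pure negative asymptotics alone (there may be none); in that case it comes from excluding the trivial strip, the only zero-energy disc. This does not affect the conclusion, and the remaining bookkeeping is the same action filtration the paper itself defers to in Proposition \ref{prop:widehatDelta}.
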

\begin{proof}
	Consider the boundary of the compactification of moduli spaces of strips with a positive and a negative mixed asymptotics at a generator of $\widecheck{C}_-(\La_0,\La_1)$. This boundary consists of two level buildings connected by either a mixed chord or a pure chord. If the connecting mixed chord is $x_{01}$, then by Lemma \ref{lem:bananas} (3) the building will either not exist or arise in pair so its contribution vanishes algebraically. The other cases are exactly those contributing to $(\widecheck{\mfm}_1)^2$.

The argument showing that $\widecheck{C}_-(\La_0,\La_1)$ is semifree is analogous to what was done in the proof of Proposition \ref{prop:widehatDelta}. 
\end{proof}

\subsection{Alternative cone description of the Rabinowitz complex of a sphere.}
%%%%%%%%%%%%%%%%%%%%%%%%%%%%%%%%%%%%%%%%%%%%%%%%%%%%%%%%%%%%%%%%%%%%%%%%%%%%%%%%
We will define a DG-bimodule map 
\begin{alignat}{1}
	\CY:\widehat{C}_+(\La_0,\La_1)\to\widecheck{C}_-(\La_0,\La_1)\label{CYmap}
\end{alignat}
and prove that the cone of $\CY$ is quasi-isomorphic to the DG-bimodule $\RFC_{\Ac-\Ac}(\La_0,\La_1)$.
For $\gamma_{10},x_{01}$ generators of $\widehat{C}_+(\La_0,\La_1)$, we set:
\begin{alignat*}{1}
	&\CY(\gamma_{10})=\sum\limits_{\substack{\beta_{01}\in\Rc^{l}(\La_0,\La_1)\cup\{y_{01}\}\\\bs{\delta}_0,\bs{\delta}_1}}\#\cM^0_{\La_{01}}(\beta_{01};\bd_0,\gamma_{10},\bd_1)\cdot\bd_1\beta_{01}\bd_0\\
	&\CY(x_{01})=\sum\limits_{\substack{\beta_{01}\in\Rc^{l}(\La_0,\La_1)\\\bs{\delta}_0,\bs{\delta}_1}}\#\cM^0_{\La_{01}}(\beta_{01};\bd_0,x_{01},\bd_1)\cdot\bd_1\beta_{01}\bd_0
	%=\bs{v}_1\ba_1^{--}(x_{01})\bs{v}_0
\end{alignat*}
See Figure \ref{fig:CYmap}.
Note that in the last equation $\beta_{01}$ can never be the minimum Morse chord $y_{01}$ for energy reasons. Observe also that as ungraded maps the map $\CY$ is given on generators of $\widehat{C}_+(\La_0,\La_1)$ by 
the component of the map $\ba_1$ taking values in $\widecheck{C}_-(\La_0,\La_1)$.
\begin{lem}\label{lem:CY1}
	The map $\CY$ is a DG-bimodule map.
\end{lem}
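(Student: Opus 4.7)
My plan is to verify three things: (i) that $\CY$ as defined on the generators $\gamma_{10}$ and $x_{01}$ extends to a well-defined $\Ac$-bimodule map, (ii) that it has degree zero, and (iii) that it commutes with the differentials. For (i) I simply extend by the sandwich rule $\CY(\bs{v}_1 a \bs{v}_0) = \bs{v}_1\,\CY(a)\,\bs{v}_0$, which makes the bimodule property automatic. For (ii) a short calculation with the dimension formulas of Section \ref{sec:moduli} applied to $\cM^0_{\La_{01}}(\beta_{01};\bd_0,\gamma_{10},\bd_1)$ (a banana, so $j^+=1$) and $\cM^0_{\La_{01}}(\beta_{01};\bd_0,x_{01},\bd_1)$ (a strip, so $j^+=0$) shows that the shift $[1]$ in $\widehat{C}_+$ is exactly what is needed to make $\CY$ land in the right degree in $\widecheck{C}_-$. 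The substantive content is (iii).

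For (iii) I use the standard SFT cobordism argument. On the generator $\gamma_{10}$ I look at the compactification of the one-dimensional moduli spaces $\cM^1_{\La_{01}}(\beta_{01};\bd_0,\gamma_{10},\bd_1)$ for each admissible output $\beta_{01}$ (long chord or $y_{01}$) and each pair of pure-chord words $\bd_0,\bd_1$. By Gromov--SFT compactness the boundary consists of rigid two-level buildings whose algebraic count vanishes over $\Z_2$. Breakings along a pure Reeb chord correspond to the Leibniz action of $\partial_\Ac$ on pure-chord factors and are absorbed into the bimodule structure on both sides. Breakings at a long mixed chord or at $y_{01}$ split into either ``upper strip contributing to $\bs{\D}_1(\gamma_{10})$ + lower banana contributing to $\CY$'' or ``upper banana contributing to $\CY(\gamma_{10})$ + lower strip contributing to $\widecheck{\mfm}_1$'', producing exactly the terms $\CY(\bs{\D}_1(\gamma_{10}))$ and $\widecheck{\mfm}_1(\CY(\gamma_{10}))$.

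The delicate point is the treatment of breakings at the chord $x_{01}$. Such a degeneration produces an upper strip with positive puncture at $\beta_{01}$ and negative puncture at $x_{01}$ (which is precisely what is counted by $\CY(x_{01})$), glued to a lower rigid banana with positive punctures at $x_{01}$ and $\gamma_{10}$. By Lemma \ref{lem:bananas} (2) there are exactly two such rigid lower bananas, one carrying $\gamma_0$ as the single pure negative asymptote on the $\La_0$ boundary arc and one carrying $\gamma_1$ on the $\La_1$ arc. After gluing, these contribute precisely $\gamma_1\,\CY(x_{01}) + \CY(x_{01})\,\gamma_0$, matching the extra terms on the right-hand side of
\[
\CY(\widehat{\mfm}_1(\gamma_{10})) = \CY(\bs{\D}_1(\gamma_{10})) + \gamma_1\,\CY(x_{01}) + \CY(x_{01})\,\gamma_0.
\]

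For the generator $x_{01}$, where $\widehat{\mfm}_1(x_{01})=0$, I run the same argument on the one-dimensional moduli $\cM^1_{\La_{01}}(\beta_{01};\bd_0,x_{01},\bd_1)$ to obtain $\widecheck{\mfm}_1(\CY(x_{01}))=0$. Here the would-be dangerous breakings are those at an intermediate $x_{01}$, but a lower piece would then be a rigid strip from $x_{01}$ to some $\beta_{01}$, whose algebraic count vanishes by Lemma \ref{lem:bananas} (3). The main obstacle in the whole proof is this combinatorial accounting of the Morse-chord boundary contributions, namely recognizing that the two rigid bananas produced by Lemma \ref{lem:bananas} (2) are exactly in bijection with the two algebraic terms $\gamma_1 x_{01}$ and $x_{01}\gamma_0$ that were built into the definition of $\widehat{\mfm}_1$.
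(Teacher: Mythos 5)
Your proposal is correct and follows essentially the same route as the paper: the chain-map identity on $\gamma_{10}$ is verified by identifying the four types of terms with the boundary strata of the one-dimensional moduli spaces $\cM^1_{\La_{01}}(\beta_{01};\bd_0,\gamma_{10},\bd_1)$ of bananas with second positive puncture at a generator of $\widecheck{C}_-(\La_0,\La_1)$, with the breaking at the connecting chord $x_{01}$ matched to $\gamma_1\CY(x_{01})+\CY(x_{01})\gamma_0$ via Lemma \ref{lem:bananas}~(2). The only (immaterial) difference is that for the generator $x_{01}$ the paper dispenses with the boundary analysis and simply observes $\widecheck{\mfm}_1\circ\CY(x_{01})=(\ba_1^{--})^2(x_{01})=0$.
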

\begin{proof}
We compute first
\begin{alignat*}{1}
	\CY\circ\widehat{\mfm}_1(x_{01})+\widecheck{\mfm}_1\circ\CY(x_{01})=\widecheck{\mfm}_1\circ\CY(x_{01})=(\ba_1^{--})^2(x_{01})=0
\end{alignat*}
where the second to last equality holds simply by definition. Then, we have
\begin{alignat}{2}
	\CY\circ\widehat{\mfm}_1(\gamma_{10})&+\widecheck{\mfm}_1\circ\CY(\gamma_{10})=\CY\big(\bs{\D}_1(\gamma_{10})+\gamma_1x_{01}+x_{01}\gamma_0\big)+\widecheck{\mfm}_1\big(\CY(\gamma_{10})\big)\nonumber\\
	&=\CY\big(\bs{\D}_1(\gamma_{10})\big)+\gamma_1\CY(x_{01})+\CY(x_{01})\gamma_0+\widecheck{\mfm}_1\big(\CY(\gamma_{10})\big)\nonumber\\
	&=\CY\Big(\sum\#\cM^0(\beta_{10};\bs{\delta}_0,\gamma_{10},\bs{\delta}_1)\bs{\delta}_1\beta_{10}\bs{\delta}_0\Big)+\gamma_1\CY(x_{01})+\CY(x_{01})\gamma_0\nonumber\\
	&+\widehat{\mfm}_1\Big(\sum\#\cM^0(\xi_{01};\bs{\sigma}_0,\gamma_{10},\bs{\sigma}_1)\bs{\sigma}_1\xi_{01}\bs{\sigma}_0\Big)\nonumber\\
	&=\sum\#\cM^0(\nu_{01},\bs{\delta}_0',\beta_{10},\bs{\delta}_1')\#\cM^0(\beta_{10};\bs{\delta}_0,\gamma_{10},\bs{\delta}_1)\bs{\delta}_1\bs{\delta}_1'\nu_{01}\bs{\delta}_0'\bs{\delta}_0\label{ba1}\\
	&+\gamma_1\CY(x_{01})+\CY(x_{01})\gamma_0\label{ba2}\\
	&+\sum\#\cM^0(\xi_{01};\bs{\sigma}_0,\gamma_{10},\bs{\sigma}_1)\Big(\partial_\Ac(\bs{\sigma}_1)\xi_{01}\bs{\sigma}_0+\bs{\sigma}_1\xi_{01}\partial_\Ac(\bs{\sigma}_0)\Big)\label{ba3}\\
	&+\sum\#\cM^0(\nu_{01};\bs{\sigma}_0',\xi_{01},\bs{\sigma}_1')\#\cM^0(\xi_{01};\bs{\sigma}_0,\gamma_{10},\bs{\sigma}_1)\bs{\sigma}_1\bs{\sigma}_1'\nu_{01}\bs{\sigma}_0'\bs{\sigma}_0\label{ba4}
\end{alignat}
The terms of this last equation correspond actually to the algebraic contributions of the pseudo-holomorphic buildings appearing in the boundary of the compactification of moduli spaces of bananas with two positive asymptotics, one at $\gamma_{10}$ and another at a generator of $\widecheck{C}_-(\La_0,\La_1)$, and some negative pure Reeb chord asymptotics, see Figure \ref{fig:index2banana}.
\end{proof}
\begin{figure}[ht]  
	\labellist
	\footnotesize
	\pinlabel $\gamma_{10}$ at 39 90
	\pinlabel out at 92 90
	\pinlabel $x_{01}$ at 165 90
	\pinlabel out at 165 -10
	\endlabellist
	\normalsize
	\begin{center}\includegraphics[width=6cm]{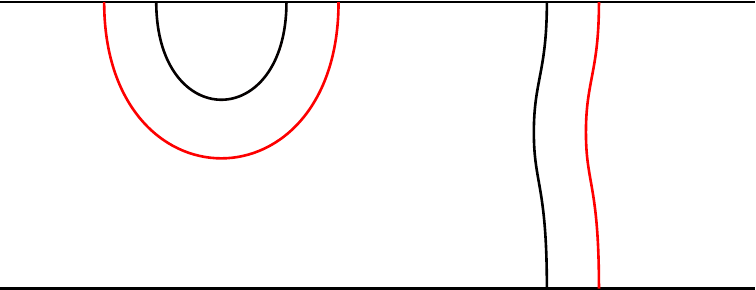}\end{center}
	\caption{Pseudo-holomorphic discs contributing to the map $\CY$.}
	\label{fig:CYmap}
\end{figure}
\begin{figure}[ht]  
	\labellist
	\footnotesize
	\pinlabel $\gamma_{10}$ at 38 55
	\pinlabel out at 92 138
	\pinlabel $\gamma_{10}$ at 158 138
	\pinlabel out at 212 138
	\pinlabel $\gamma_{10}$ at 275 138
	\pinlabel out at 332 55
	\endlabellist
	\normalsize
	\begin{center}\includegraphics[width=8cm]{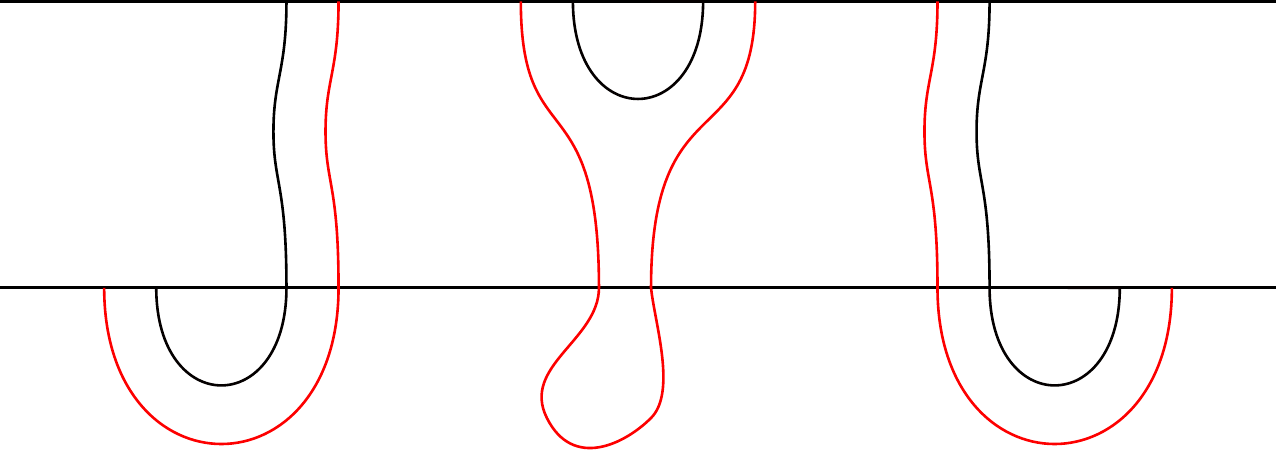}\end{center}
	\caption{Types of pseudo-holomorphic buildings in the boundary of moduli spaces of bananas with a positive asymptotic at $\gamma_{10}$. When the connecting Reeb chords between the two components of the leftmost building is a long chord or $y_{01}$, then it contributes algebraically to \eqref{ba4}; and if it is the chord $x_{01}$, then it contributes algebraically to \eqref{ba2}. The middle building schematizes the contributions of type \eqref{ba3} and the rightmost building contributes to \eqref{ba1}.}
	\label{fig:index2banana}
\end{figure}

The cone of $\CY$, $\Cone(\CY)=\widehat{C}_+(\La_0,\La_1)[-1]\oplus\widecheck{C}_-(\La_0,\La_1)$, is a DG $\Ac$-bimodule,
and we prove that it 
is quasi-isomorphic to the Rabinowitz bimodule $\RFC_{\Ac-\Ac}(\La_0,\La_1)$. This is almost trivial, indeed consider the bimodule map:
\begin{alignat*}{1}
	\nu:\Cone(\CY)\to\RFC_{\Ac-\Ac}(\La_0,\La_1)
\end{alignat*}
sending each mixed chord of the $2$-copy $\La_0\cup\La_1$ to itself.
\begin{prop}
	$\nu$ is a quasi-isomorphism of DG-bimodules.
\end{prop}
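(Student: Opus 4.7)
Since the proposition is called \emph{almost trivial}, the plan is to observe that $\nu$ is in fact an isomorphism of DG $\Ac$-bimodules, hence a fortiori a quasi-isomorphism. After unpacking the degree shifts, $\Cone(\CY)$ and $\RFC_{\Ac-\Ac}(\La_0,\La_1)$ have the same underlying graded $\Ac$-bimodule, and $\nu$ identifies their generators; the real content is then just to verify that the differentials match.

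First, I would check that $\nu$ is a bijective degree-preserving $\Ac$-bimodule map. The external shift $[-1]$ on $\widehat{C}_+$ cancels the internal shift $[1]$ in its definition, so $\Cone(\CY)$ decomposes as $C_+(\La_0,\La_1) \oplus \langle x_{01}\rangle_{\Ac-\Ac} \oplus C^{l}_{\Ac-\Ac}(\La_0,\La_1) \oplus \langle y_{01}\rangle_{\Ac-\Ac}$, with $x_{01}$ sitting in degree $n$ and $y_{01}$ in degree $0$. This matches the decomposition of $\RFC_{\Ac-\Ac}(\La_0,\La_1) = C_+(\La_0,\La_1) \oplus C_{\Ac-\Ac}(\La_0,\La_1)$ once one splits $C_{\Ac-\Ac}(\La_0,\La_1)$ into long chords and the two Morse chords. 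Hence $\nu$ is an isomorphism of graded bimodules sending each generator to itself.

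Next, I would verify that $\nu$ intertwines the differentials on each of the four generator types. On the $\widecheck{C}_-$ generators ($\gamma_{01}$ long and $y_{01}$), the identity $\nu \circ \widecheck{\mfm}_1 = \mfm_1 \circ \nu$ is essentially tautological, since $\widecheck{\mfm}_1$ is defined as the restriction of $\ba_1^{--}$; one only has to check that $\ba_1^{--}$ applied to these generators lands in $\widecheck{C}_-$, i.e.\ has no $x_{01}$-component, which follows from the flow-tree correspondence of \cite{EESa} together with Lemma~\ref{lem:bananas}(3).

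The main check concerns the two $\widehat{C}_+[-1]$ generators. On $x_{01}$ the cone differential is $\CY(x_{01})$, while $\mfm_1 \circ \nu$ is $\ba_1^{--}(x_{01})$; equality holds because the potentially distinguishing $x_{01}$- and $y_{01}$-components of $\ba_1^{--}(x_{01})$ vanish by Lemma~\ref{lem:bananas}(3), leaving precisely the long-chord contributions that define $\CY(x_{01})$. On $\gamma_{10}$ the cone differential yields $\bs{\D}_1(\gamma_{10}) + \gamma_1 x_{01} + x_{01}\gamma_0 + \CY(\gamma_{10})$, while $\mfm_1 \circ \nu$ yields $\bs{\D}_1(\gamma_{10}) + \ba_1^{-+}(\gamma_{10})$. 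These agree once $\ba_1^{-+}(\gamma_{10})$ is split according to its mixed output $\beta_{01}$: the long-chord and $y_{01}$ parts reassemble into $\CY(\gamma_{10})$ by definition, and the $x_{01}$ part is exactly $\gamma_1 x_{01} + x_{01}\gamma_0$ by Lemma~\ref{lem:bananas}(2), which pins down the two rigid bananas with positive asymptotics at $\gamma_{10}$ and $x_{01}$. Hence $\nu$ is a bijective bimodule chain map, so an isomorphism of DG-bimodules, in particular a quasi-isomorphism; the only real hurdle is the careful bookkeeping of Morse-chord contributions that Lemma~\ref{lem:bananas} is tailor-made to handle.
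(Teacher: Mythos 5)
Your argument is correct and follows essentially the same route as the paper: $\nu$ identifies the underlying graded bimodules, and the chain-map identity is verified generator by generator, with the $x_{01}$-component of the banana map $\ba_1^{-+}(\gamma_{10})$ matching $\gamma_1 x_{01}+x_{01}\gamma_0$ via Lemma \ref{lem:bananas}(2) and the remaining components reassembling into $\CY(\gamma_{10})$ by definition. The only nitpick is that the vanishing of the $y_{01}$-component of $\ba_1^{--}(x_{01})$ is an action/energy constraint (the output must have smaller action than the input) rather than a consequence of Lemma \ref{lem:bananas}(3), but this does not affect the conclusion.
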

\begin{proof}
	We have
\begin{alignat*}{1}
	\nu\circ\mfm_1^{\Cone(\CY)}(\gamma_{10})+\mfm_1\circ\nu(\gamma_{10})&=\nu\big(\widehat{\mfm}_1(\gamma_{10})+\CY(\gamma_{10})\big)+\mfm_1(\gamma_{10})\\
	&=\nu\big(\bs{\D}_1(\gamma_{10})+\gamma_1 x_{01}+x_{01}\gamma_0+\CY(\gamma_{10})\big)+\bs{\D}_1(\gamma_{10})+\ba_1(\gamma_{10})\\
	&=\bs{\D}_1(\gamma_{10})+\gamma_1 x_{01}+x_{01}\gamma_0+\CY(\gamma_{10})+\bs{\D}_1(\gamma_{10})+\ba_1(\gamma_{10})
\end{alignat*}
Observe that $\gamma_1 x_{01}+x_{01}\gamma_0+\CY(\gamma_{10})=\ba_1(\gamma_{10})$ because $\CY(\gamma_{10})$ is defined by a count of bananas with positive asymptotics at $\gamma_{10}$ and at a chord in $\Rc^{l}(\La_0,\La_1)$, and the two other terms correspond to bananas with positive asymptotics at $\gamma_{10}$ and $x_{10}$. So the sum above vanishes. Then the relation $\nu\circ\mfm_1^{\Cone(\CY)}+\mfm_1\circ\nu=0$
when the input is $x_{01}$ or any generator of $\widecheck{C}_-(\La_0,\La_1)$ follows directly from the definition of the maps.
\end{proof}

\subsection{The Calabi-Yau isomorphism}\label{subsec:CYiso}
%%%%%%%%%%%%%%%%%%%%%%%%%%%%%%%%%%%%%%%
In this subsection, we assume that $\La_0$ is horizontally displaceable, which implies that the complex $\RFC_{\Ac-\Ac}(\La_0,\La_1)$ is acyclic. Hence it implies that the complex $\Cone(\CY)$ defined in the previous section is also acyclic and thus that the degree $0$ map $\CY:\widehat{C}_+(\La_0,\La_1)\to\widecheck{C}_-(\La_0,\La_1)$ is a quasi-isomorphism of DG-bimodules. We will show that this quasi-isomorphism is a Calabi-Yau isomorphism, by showing that there are quasi-isomorphisms of DG-bimodules $\Ac\simeq\widehat{C}_+(\La_0,\La_1)[-n-1]$ and $\RHom_{\Ac-\Ac}(\Ac,\Ac\otimes\Ac)\simeq\widecheck{C}_-(\La_0,\La_1)$.

We define a bimodule map $F:\widehat{C}_+(\La_0,\La_1)[-n-1]\to\Ac$ by
\begin{alignat*}{1}
	&F(\bs{v}_1\gamma_{10}\bs{v}_0)=0\\
	&F(\bs{v}_1x_{01}\bs{v}_0)=\bs{v}_1\bs{v}_0
\end{alignat*}
for $\bs{v}_i$ words of Reeb chords of $\La_i$ which on the right hand side are canonically identified with words of Reeb chords of $\La_0$.
\begin{prop}\label{prop:F}
	The map $F$ is a quasi-isomorphism of DG-bimodules.
\end{prop}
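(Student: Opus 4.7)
My plan is to verify directly that $F$ is a DG-bimodule map, and then to prove it is a quasi-isomorphism by exhibiting $\widehat{C}_+(\La_0,\La_1)[-n-1]$ as a $2$-term semifree bimodule resolution of $\Ac$ with $F$ as its augmentation.

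For the chain-map property, I will check $F\widehat{\mfm}_1=\partial_\Ac F$ on bimodule generators. On $\bs{v}_1 x_{01}\bs{v}_0$, since $\widehat{\mfm}_1(x_{01})=0$, the Leibniz rule yields $F\widehat{\mfm}_1(\bs{v}_1 x_{01}\bs{v}_0)=\partial_\Ac(\bs{v}_1)\bs{v}_0+\bs{v}_1\partial_\Ac(\bs{v}_0)=\partial_\Ac F(\bs{v}_1 x_{01}\bs{v}_0)$. On $\bs{v}_1\gamma_{10}\bs{v}_0$, both sides vanish: the right side trivially, and for the left, every summand of $\bs{v}_1\bs{\D}_1(\gamma_{10})\bs{v}_0$ still carries a mixed chord of type $\beta_{10}$ and is killed by $F$, while the remaining contribution $F\bigl(\bs{v}_1(\gamma_1 x_{01}+x_{01}\gamma_0)\bs{v}_0\bigr)=\bs{v}_1\gamma\bs{v}_0+\bs{v}_1\gamma\bs{v}_0$ is zero in $\Z_2$ under the canonical identification $\gamma_0=\gamma_1=\gamma$. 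The degree check is immediate: $x_{01}$ sits in degree $0$ of $\widehat{C}_+[-n-1]$, matching $|F(x_{01})|=|1|=0$.

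For the quasi-isomorphism, writing $V:=C(\La)$, I would identify, as graded bimodules, $\widehat{C}_+[-n-1]$ with a sum of two free bimodules via $x_{01}\leftrightarrow 1\otimes 1\in\Ac\otimes\Ac$ and $\gamma_{10}\leftrightarrow 1\otimes\gamma\otimes 1\in\Ac\otimes V\otimes\Ac$ (with the shift making $|\gamma_{10}|=-\CZ(\gamma)$). Under this identification, the off-diagonal piece $\gamma_{10}\mapsto\gamma_1 x_{01}+x_{01}\gamma_0$ of $\widehat{\mfm}_1$ corresponds to the Koszul map $\kappa\colon 1\otimes v\otimes 1\mapsto v\otimes 1+1\otimes v$; the internal part $\bs{\D}_1(\gamma_{10})$ equals, via the Ekholm--Etnyre--Sabloff analysis of $2$-copy discs through gradient flow trees (cf.\ Lemma~\ref{lem:bananas} and \cite{EESa}),
\begin{equation*}
\sum_{\delta_1\cdots\delta_k\in\partial_\Ac(\gamma)}\ \sum_{i=1}^k\delta_1\cdots\delta_{i-1}\,(\delta_i)_{10}\,\delta_{i+1}\cdots\delta_k,
\end{equation*}
which is exactly the differential transported to $\Ac\otimes V\otimes\Ac$ from $\partial_{\Ac\otimes\Ac}$ along the bimodule isomorphism $\kappa\colon\Ac\otimes V\otimes\Ac\xrightarrow{\cong}\ker\mu$ (an isomorphism of graded bimodules because $\Ac$ is free as a graded algebra on $V$). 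Thus $\widehat{C}_+[-n-1]$ realizes the classical $2$-term semifree bimodule resolution $\Ac\otimes V\otimes\Ac\xrightarrow{\kappa}\Ac\otimes\Ac\xrightarrow{\mu}\Ac$ of $\Ac$, with $F$ the augmentation $\mu$, and is therefore a quasi-isomorphism.

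The main obstacle is the moduli-theoretic identification of $\bs{\D}_1(\gamma_{10})$ with the Koszul-induced differential displayed above: one must show that, in the small Morse perturbation regime of Section~\ref{sec:2copy}, rigid $2$-copy bananas with positive asymptotics at $\beta_{10}$ and $\gamma_{10}$ and pure negative asymptotics are in bijection with rigid ChE discs $\gamma\to\delta_1\cdots\delta_k$ decorated with a position $i$ where the boundary transitions between $\La_0$ and $\La_1$ via a rigid gradient flow line of the defining Morse function.
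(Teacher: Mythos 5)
Your argument is correct, but it takes a genuinely different route from the paper. The paper proves acyclicity of $\Cone(F)$ directly, by writing down an explicit contracting homotopy $h$ (sending $a^1\cdots a^k\mapsto x_{01}a^1\cdots a^k$ and $\bs{w}_1x_{01}\bs{v}_0\mapsto\overline{\bs{w}}_1\bs{v}_0$, where $\overline{\bs{w}}_1$ replaces one letter of $\bs{w}_1$ at a time by its mixed version) and verifying $\partial^{\Cone(F)}\circ h+h\circ\partial^{\Cone(F)}=\id$ on the three types of generators. You instead recognize $\widehat{C}_+(\La_0,\La_1)[-n-1]$ as the standard two-term semifree bimodule resolution $\Ac\otimes V\otimes\Ac\xrightarrow{\kappa}\Ac\otimes\Ac$ of the tensor algebra $\Ac=TV$, with $F$ corresponding to the multiplication map; exactness of $0\to\Ac\otimes V\otimes\Ac\to\Ac\otimes\Ac\to\Ac\to0$ for a free algebra then gives the quasi-isomorphism. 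What is notable is that the ``main obstacle'' you single out --- the identification of $\bs{\D}_1(\gamma_{10})$ with $\sum_i\delta_1\cdots(\delta_i)_{10}\cdots\delta_k$ summed over the words of $\partial_\Ac(\gamma)$ --- is exactly the same geometric input the paper relies on, in the guise of the unproved step ``Observe that $\overline{\partial_\Ac(\bs{w}_1)}=\bs{\D}_1(\overline{\bs{w}}_1)$'' in its homotopy computation; both proofs ultimately rest on the Ekholm--Etnyre--Sabloff two-copy correspondence for rigid discs whose mixed asymptotics are all long chords, which is not covered by Lemma \ref{lem:bananas} but follows from the same analysis in \cite{EESa} with empty flow trees. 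Your approach buys a conceptual explanation of why the resolution exists, why it has precisely these two free summands, and makes the homological smoothness of $\Ac$ transparent; the paper's homotopy is more self-contained and avoids invoking the structure theory of tensor algebras. Your degree bookkeeping and chain-map check are both correct.
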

\begin{proof}
	The fact that $F$ is a degree $0$ chain map follows directly from the definition. It is thus a DG-bimodule morphism and we check that it is a quasi-isomorphism by proving that its cone is acyclic. 
	For a word of Reeb chords $\bs{w}_1=w^1\cdots w^k$ of $\La_1$, write $$\overline{\bs{w}}_1=\sum\limits_{j=1}^kw^1\cdots w^{j-1}w^j_{10}w^{j+1}\cdots w^k\in C_{\Ac-\Ac}(\La_1,\La_0)$$
	Consider the DG-bimodule $\Cone(F)$, whose differential we denote $\partial^{\Cone(F)}$. We define a degree $-1$ $\Z_2$-linear map $h:\Cone(F)\to\Cone(F)$ by:
	\begin{alignat*}{1}
		&h(\bs{w}_1\gamma_{10}\bs{v}_0)=0,\\
		&h(\bs{w}_1 x_{01}\bs{v}_0)=\overline{\bs{w}}_1\bs{v}_0,\\
		&h(a^1\cdots a^k)=x_{01}a^1\cdots a^k
	\end{alignat*}
and show that $h$ defines a homotopy between the identity map and the zero map on $\Cone(F)$, i.e. that
\begin{alignat*}{1}
	\partial^{\Cone(F)}\circ h&+h\circ\partial^{\Cone(F)}=\id_{\Cone(F)}
\end{alignat*}
We check this relation for the three different types of elements in $\Cone(F)$. First:
\begin{alignat*}{1}
	\big(\partial^{\Cone(F)}\circ h&+h\circ\partial^{\Cone(F)}\big)(\bs{w}_1\gamma_{10}\bs{v}_0)=0+h\circ\widehat{\mfm}_1(\bs{w}_1\gamma_{10}\bs{v}_0)+h\circ F(\bs{w}_1\gamma_{10}\bs{v}_0)\\
	&=h\Big(\partial_\Ac(\bs{w}_1)\gamma_{10}\bs{v}_0+\bs{w}_1\gamma_{10}\partial_\Ac(\bs{v}_0)+\bs{w}_1\bs{\D}_1(\gamma_{10})\bs{v}_0+\bs{w}_1\big(\gamma x_{01}+x_{01}\gamma\big)\bs{v}_0\Big)+0\\
	&=h\big(\bs{w}_1\gamma x_{01}\bs{v}_0+\bs{w}_1x_{01}\gamma\bs{v}_0\big)\\
	&=\overline{\bs{w}}_1\gamma\bs{v}_0+\bs{w}_1\gamma_{10}\bs{v}_0+\overline{\bs{w}}_1\gamma\bs{v}_0\\
	&=\bs{w}_1\gamma_{10}\bs{v}_0.
\end{alignat*}
Then,
\begin{alignat*}{1}
	\big(\partial^{\Cone(F)}\circ h+h\circ\partial^{\Cone(F)}\big)(\bs{w}_1x_{01}\bs{v}_0)&=\partial^{\Cone(F)}(\overline{\bs{w}}_1\bs{v}_0)+h\big(\partial_\Ac(\bs{w}_1)x_{01}\bs{v}_0+\bs{w}_1x_{01}\partial_\Ac(\bs{v}_0)+\bs{w}_1\bs{v}_0\big)\\
	&=\widehat{\mfm}_1(\overline{\bs{w}}_1)\bs{v}_0+\overline{\bs{w}}_1\partial_\Ac(\bs{v}_0)+\overline{\partial_\Ac(\bs{w}_1)}\bs{v}_0+\overline{\bs{w}}_1\partial_\Ac(\bs{v}_0)+x_{01}\bs{w}_1\bs{v}_0\\
	&=\widehat{\mfm}_1(\overline{\bs{w}}_1)\bs{v}_0+\overline{\partial_\Ac(\bs{w}_1)}\bs{v}_0+x_{01}\bs{w}_1\bs{v}_0
\end{alignat*}
Observe that $\overline{\partial_\Ac(\bs{w}_1)}=\bs{\D}_1(\overline{\bs{w}}_1)$ which is one component of $\widehat{\mfm}_1(\overline{\bs{w}}_1)$. Assuming $\bs{w}_1=w^1\dots w^k$ we thus get
\begin{alignat*}{1}
	\big(\partial^{\Cone(F)}\circ h+h\circ\partial^{\Cone(F)}\big)(\bs{w}_1x_{01}\bs{v}_0)&=\sum\limits_{j=1}^k(w^1\cdots w^{j-1}(w^jx_{01}+x_{01}w^j)w^{j+1}\cdots w^k)\bs{v}_0+x_{01}\bs{w}_1\bs{v}_0\\
	&=\bs{w}_1x_{01}\bs{v}_0.
\end{alignat*}
Finally, for a word $a^1\dots a^k$ in $\Ac$, we compute
\begin{alignat*}{1}
	\big(\partial^{\Cone(F)}\circ h+h\circ\partial^{\Cone(F)}\big)(a^1\cdots a^k)&=\partial^{\Cone(F)}(x_{01}a^1\cdots a^k)+h\circ\partial_\Ac(a^1\cdots a^k)\\
	&=x_{01}\partial_\Ac(a^1\cdots a^k)+F(x_{01}a^1\cdots a^k)+x_{01}\partial_\Ac(a^1\cdots a^k)\\
	&=a^1\cdots a^k.
\end{alignat*}
\end{proof}
Observe that Proposition \ref{prop:F} implies that $\widehat{C}_+(\La_0,\La_1)[-n-1]$ is a semifree resolution of the diagonal DG-bimodule $\Ac$. It means in particular that there is a quasi-isomorphism of DG-bimodules $\RHom_{\Ac-\Ac}(\Ac,\Ac\otimes\Ac)\simeq\Hom_{\Ac-\Ac}\big(\widehat{C}_+(\La_0,\La_1)[-n-1],\Ac\otimes\Ac\big)$. This semifree resolution is also of finite rank so $\Ac$ is homologically smooth.

\begin{rem}
	Observe that such a finite rank semifree resolution is explicitly given in \cite[Section 3.6]{Keller:deformed} for a DG-category associated to a quiver (with some specific properties).
\end{rem}

We consider now a bimodule map $G:\widecheck{C}_-(\La_0,\La_1)\to\Hom_{\Ac-\Ac}\big(\widehat{C}_+(\La_0,\La_1)[-n-1],\Ac\otimes\Ac\big)$ defined on generators by $G(\gamma_{01})=\phi_{\gamma}$, where $\phi_{\gamma}:\widehat{C}_+(\La_0,\La_1)[-n]\to\Ac\otimes\Ac$ is the bimodule map defined on generators by:
\begin{alignat*}{1}
&\phi_\gamma(\beta_{10})=\left\{
	\begin{array}{ll}
		1\otimes 1&\mbox{ if }\beta=\gamma,\\
		0&\mbox{ otherwise.}
	\end{array}\right.\\
&\phi_\gamma(x_{01})=0.
\end{alignat*}
and similarly $G(y_{01})=\phi_{y}$ which is the bimodule map defined by $\phi_y(\beta_{10})=0$ and $\phi_y(x_{01})=1\otimes 1$.

\begin{prop}\label{prop:Gqi}
	The map $G$ is a quasi-isomorphism of DG-bimodules.
\end{prop}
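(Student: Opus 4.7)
The plan is to show $G$ is in fact an \emph{isomorphism} of DG-bimodules. As a graded $\Ac$-bimodule, $\widehat{C}_+(\La_0,\La_1)[-n-1]$ is free on the finite basis $\Rc(\La_1,\La_0)\cup\{x_{01}\}$, so $\Hom_{\Ac-\Ac}(\widehat{C}_+(\La_0,\La_1)[-n-1],\Ac\otimes\Ac)$ is itself free (with its outer $\Ac$-bimodule structure, where the elements are morphisms to $\Ac\otimes\Ac$ equipped with its inner structure) on the dual basis $\{\phi_\gamma\}_{\gamma\in\Rc(\La)}\cup\{\phi_y\}$. Since $\widecheck{C}_-(\La_0,\La_1)$ is free on $\Rc^l(\La_0,\La_1)\cup\{y_{01}\}$ and $G$ maps one basis bijectively to the other, a quick degree check (using $|\gamma_{01}|_{\widecheck{C}_-}=\CZ(\gamma)$, $|\gamma_{10}|_{\widehat{C}_+[-n-1]}=-\CZ(\gamma)$, and $|x_{01}|_{\widehat{C}_+[-n-1]}=0$) shows that $G$ is a degree-preserving isomorphism of graded bimodules.

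It then remains to verify that $G$ commutes with the differentials, i.e.\ $D\circ G=G\circ\widecheck{\mfm}_1$ where $D(\phi)=\phi\circ\widehat{\mfm}_1+\partial_{\Ac\otimes\Ac}\circ\phi$. By Leibniz this reduces to checking the identity on the generators $\gamma_{01}$ and $y_{01}$, and then to evaluating the resulting bimodule morphisms on each of the generators $\xi_{10}, x_{01}$ of $\widehat{C}_+[-n-1]$. Using $\widehat{\mfm}_1(\xi_{10})=\bs{\D}_1(\xi_{10})+\xi_1 x_{01}+x_{01}\xi_0$ together with the formula $\widecheck{\mfm}_1(y_{01})=\sum_{\gamma\in\Rc(\La)}(\gamma_1\gamma_{01}+\gamma_{01}\gamma_0)$ from Lemma \ref{lem:bananas}(1), the $y_{01}$-case yields $\xi_1\otimes 1+1\otimes\xi_0$ on both sides at $\xi_{10}$ and $0$ on both sides at $x_{01}$. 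The $\gamma_{01}$-case evaluated at $x_{01}$ vanishes trivially, while at $\xi_{10}$ it reduces to the moduli-space identity
\begin{align*}
\sum_{\bd_0,\bd_1}\#\cM^0(\gamma_{10};\bd_0,\xi_{10},\bd_1)\,\bd_1\otimes\bd_0 = \sum_{\bd_0,\bd_1}\#\cM^0(\xi_{01};\bd_0,\gamma_{01},\bd_1)\,\bd_1\otimes\bd_0.
\end{align*}

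The main obstacle is this last identity, which is the geometric heart of the matter: it is the bimodule-coefficient analogue of the Sabloff/Ekholm-Etnyre-Sabloff duality for a $2$-copy, and is essentially built into the paper's choice to define $\ba_1^{--}$ as the Legendrian contact \emph{cohomology} differential dual to the contact homology differential $\bs{\D}_1^{++}$. Geometrically one expects it from a strip-reversal symmetry of the $2$-copy exchanging $\La_0\leftrightarrow\La_1$ and swapping positive and negative mixed asymptotics. Once this identification is in hand, $G$ is a degree-zero bimodule isomorphism commuting with differentials, and hence an isomorphism of DG-bimodules and in particular a quasi-isomorphism.
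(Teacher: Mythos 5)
Your proof follows essentially the same route as the paper's: reduce the chain-map identity to the bimodule generators $\gamma_{01}$ and $y_{01}$, evaluate on $\xi_{10}$ and $x_{01}$, handle the Morse-chord cases via Lemma \ref{lem:bananas}, and isolate all remaining content in the displayed moduli-space identity relating $\bs{\D}_1$ applied to $\xi_{10}$ to $\ba_1^{--}$ applied to $\gamma_{01}$; your preliminary observation that $G$ is already an isomorphism of graded bimodules is also effectively in the paper, which exhibits $G^{-1}$ explicitly. The one place where you hedge is the only point of real substance: that identity is not ``built into the definition'' of $\ba_1^{--}$, since the two maps count strips with asymptotics at genuinely different mixed chords (in $\Rc(\La_1,\La_0)$ versus $\Rc(\La_0,\La_1)$); the paper justifies it by asserting that the same pseudo-holomorphic discs contribute to both sides, which is the $2$-copy correspondence of \cite{EESa} identifying each of the two moduli spaces with moduli spaces of strips on $\R\times\La$. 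If you want a complete argument you should spell out that correspondence and verify that it matches the words $\bd_0$ and $\bd_1$ individually --- and hence the two tensor factors of the output in $\Ac\otimes\Ac$, where the inner and outer bimodule structures enter asymmetrically --- rather than merely the total counts.
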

\begin{proof}
Recall that the differential on $\Hom_{\Ac-\Ac}\big(\widehat{C}_+(\La_0,\La_1)[-n-1],\Ac\otimes\Ac\big)$ is given by $D(\phi)=\phi\circ\widehat{\mfm}_1+\partial_{\Ac\otimes\Ac}\circ\phi$. We want to prove that
\begin{alignat*}{1}
	G\circ\widecheck{\mfm}_1+D\circ G=0
\end{alignat*}
Let $\gamma_{01}$ be a generator in $\widecheck{C}_-(\La_0,\La_1)$. We have that $G\circ\widecheck{\mfm}_1(\gamma_{01}):\widehat{C}_+(\La_0,\La_1)[-n-1]\to\Ac\otimes\Ac$ is the bimodule map for which each pseudo-holomorphic disc with a positive asymptotic at a chord $\beta_{01}$, a negative asymptotic at $\gamma_{01}$ and negative asymptotics at words of pure Reeb chords $\bs{v}_0$ and $\bs{v}_1$ contributes $\bs{v}_1\otimes\bs{v}_0$ when applied to $\beta_{10}\in\widehat{C}_+(\La_0,\La_1)$. On the other side, we have $D\circ G(\gamma_{01})=G(\gamma_{01})\circ\widehat{\mfm}_1+\partial_{\Ac\otimes\Ac}\circ G(\gamma_{01})$. The exact same pseudo-holomorphic discs as above contribute $\bs{v}_1\otimes\bs{v}_0$ to $G(\gamma_{01})\circ\widehat{\mfm}_1(\beta_{10})$. Note that these discs can exist only if $\beta\neq\gamma$, for action reasons. In this case, we also have $\partial_{\Ac\otimes\Ac}\circ G(\gamma_{01})(\beta_{10})=0$. If $\beta=\gamma$, then $G\circ\widecheck{\mfm}_1(\gamma_{01})(\beta_{10})=G(\gamma_{01})\circ\widehat{\mfm}_1(\beta_{10})=0$ and $\partial_{\Ac\otimes\Ac}\circ G(\gamma_{01})(\beta_{10})=\partial_{\Ac\otimes\Ac}(1\otimes1)=0$. Observe that for $x_{01}$ we directly have that
\begin{alignat*}{1}
	G\circ\widecheck{\mfm}_1(\gamma_{01})(x_{01})+D\circ G(\gamma_{01})(x_{01})=0
\end{alignat*}
We now prove that $G\circ\widecheck{\mfm}_1+D\circ G$ vanishes when applied to $y_{01}\in\widecheck{C}_-(\La_0,\La_1)$.
Recall that $\widecheck{\mfm}_1(y_{01})=\sum\limits_{\gamma\in\Rc(\La)}\gamma\gamma_{01}+\gamma_{01}\gamma$, so we have
\begin{alignat*}{1}
	&G\circ\widecheck{\mfm}_1(y_{01})(\gamma_{10})=\gamma\otimes1+1\otimes\gamma\\
	&G\circ\widecheck{\mfm}_1(y_{01})(x_{01})=0
\end{alignat*}
On the other side, for a chord $\gamma_{10}$, we have
\begin{alignat*}{1}
	D\circ G(y_{01})(\gamma_{10})&=G(y_{01})\circ\widehat{\mfm}_1(\gamma_{10})+\partial_{\Ac\otimes\Ac}\circ G(y_{01})(\gamma_{10})\\
	&=G(y_{01})\circ\widehat{\mfm}_1(\gamma_{10})+0\\
	&=G(y_{01})\big(\bs{\D}_1(\gamma_{10})+\gamma x_{01}+x_{01}\gamma\big)\\
	&=G(y_{01})(\gamma x_{01}+x_{01}\gamma)=\gamma\otimes1+1\otimes\gamma
\end{alignat*}
And finally,
\begin{alignat*}{1}
	D\circ G(y_{01})(x_{01})&=G(y_{01})\circ\widehat{\mfm}_1(\gamma_{10})+\partial_{\Ac\otimes\Ac}\circ G(y_{01})(x_{01})\\
	&=0+\partial_{\Ac\otimes\Ac}(1\otimes1)=0
\end{alignat*}
Observe that $G$ admits an inverse $G^{-1}:\Hom_{\Ac-\Ac}\big(\widehat{C}_+(\La_0,\La_1)[-n-1],\Ac\otimes\Ac\big)\to\widecheck{C}_-(\La_0,\La_1)$ which maps a bimodule morphism $\phi\in\Hom_{\Ac-\Ac}\big(\widehat{C}_+(\La_0,\La_1)[-n-1],\Ac\otimes\Ac\big)$ given by
\begin{alignat*}{1}
	&\phi(\gamma_{10})=\sum\limits_{j=1}^{k_\gamma}\bs{w}_\gamma^j\otimes\bs{v}_\gamma^j\\
	&\phi(x_{01})=\sum\limits_{j=1}^{k_x}\bs{w}_x^j\otimes\bs{v}_x^j
\end{alignat*}
to $G^{-1}(\phi)=\sum\limits_{\gamma\in\Rc(\La)}\sum\limits_{j=1}^{k_\gamma}\bs{w}_\gamma^j\gamma_{01}\bs{v}_\gamma^j+\sum\limits_{j=1}^{k_x}\bs{w}_x^jy_{01}\bs{v}_x^j$.
\end{proof}

\begin{proof}[Proof of Theorem \ref{teo:CY}]
From the proof of Proposition \ref{prop:F} we obtained that $\Ac$ is homologically smooth with $\widehat{C}_+(\La_0,\La_1)[-n-1]$ a finite dimensional semifree resolution. Consider the shifted Calabi-Yau map:
\begin{alignat*}{1}
	\mcCY:=\CY[-n-1]:\widehat{C}_+(\La_0,\La_1)[-n-1]\to\widecheck{C}_-(\La_0,\La_1)[-n-1]
\end{alignat*}
If $\La_0$ is horizontally displaceable then $\CY$ is a quasi-isomorphism. Moreover, $\widehat{C}_+(\La_0,\La_1)[-n-1]$ is quasi-isomorphic to $\Ac$ by the map $F$ and the target $\widecheck{C}_-(\La_0,\La_1)[-n-1]$ is quasi-isomorphic to  $\Hom_{\Ac-\Ac}\big(\widehat{C}_+(\La_0,\La_1)[-n-1],\Ac\otimes\Ac\big)[-n-1]$ by the map $G[-n-1]$. Remember also that we have $\Hom_{\Ac-\Ac}\big(\widehat{C}_+(\La_0,\La_1)[-n-1],\Ac\otimes\Ac\big)[-n-1]\cong\RHom_{\Ac-\Ac}\big(\Ac,\Ac\otimes\Ac\big)[-n-1]$, so we get a quasi-isomorphism $\Ac\to\Ac^![-n-1]$.
Finally, we need to check that $\mcCY=\mcCY^![-n-1]$.
Note that 
\begin{alignat*}{1}
	\mcCY^!:\Hom_{\Ac-\Ac}\big(\widecheck{C}_-(\La_0,\La_1)[-n-1],\Ac\otimes\Ac\big)\to\Hom_{\Ac-\Ac}\big(\widehat{C}_+(\La_0,\La_1)[-n-1],\Ac\otimes\Ac\big)
\end{alignat*}
is by definition given by $\mcCY^!(\phi)=\phi\circ\mcCY$.
% for $\phi\in\RHom_{\Ac-\Ac}\big(\widecheck{C}_-(\La_0,\La_1)[-n-1],\Ac\otimes\Ac\big)$.
Observe that the target of $\mcCY^!$ is quasi-isomorphic to $\widecheck{C}_-(\La_0,\La_1)$ via the map $G^{-1}$, while its domain is quasi-isomorphic to $\widehat{C}_+(\La_0,\La_1)$ via the DG-bimodule map:
\begin{alignat*}{1}
	H:\widehat{C}_+(\La_0,\La_1)\to\Hom_{\Ac-\Ac}\big(\widecheck{C}_-(\La_0,\La_1)[-n-1],\Ac\otimes\Ac\big)
\end{alignat*}
defined on generators by $H(\gamma_{10})=\phi_{\gamma}$ with $\phi_{\gamma}(\gamma_{01})=1\otimes1$ and $\phi_{\gamma}$ vanishes otherwise; and $H(x_{01})=\phi_{x}$ where $\phi_{x}(y_{01})=1\otimes1$ and vanishes otherwise.
A similar argument as in the proof of Proposition \ref{prop:Gqi} shows that $H$ is a quasi-isomorphism. Thus we get a DG-bimodule map
\begin{alignat*}{1}
	G^{-1}\circ\mcCY^!\circ H:\widehat{C}_+(\La_0,\La_1)\to\widecheck{C}_-(\La_0,\La_1)
\end{alignat*}
We compute this map on generators:
\begin{alignat*}{1}
	G^{-1}\circ\mcCY^!\circ H(\gamma_{10})=G^{-1}\circ\mcCY^!(\phi_{\gamma})=G^{-1}\circ\phi_{\gamma}\circ\mcCY
\end{alignat*}
And $\phi_{\gamma}\circ\mcCY\in\Hom_{\Ac-\Ac}(\widehat{C}_+(\La_0,\La_1)[-n-1],\Ac\otimes\Ac)$ is given by a count of bananas or strips with a positive asymptotic at $\gamma_{01}$, i.e.
\begin{alignat*}{1}
	\phi_{\gamma}\circ\mcCY(\beta_{10})&=\phi_{\gamma}\Big(\sum_{\substack{\xi_{01}\in\Rc^{l}(\La_0,\La_1)\cup\{y_{01}\}\\\bd_0,\bd_1}}\#\cM^0(\xi_{01};\bd_0,\beta_{10},\bd_1)\bd_1\xi_{01}\bd_0\Big)\\
	&=\sum_{\bd_0,\bd_1}\#\cM^0(\gamma_{01};\bd_0,\beta_{10},\bd_1)\bd_1\otimes\bd_0
\end{alignat*}
and
\begin{alignat*}{1}
	\phi_{\gamma}\circ\mcCY(x_{01})=\sum_{\bd_0,\bd_1}\#\cM^0(\gamma_{01};\bd_0,x_{01},\bd_1)\bd_1\otimes\bd_0
\end{alignat*}
So we get
\begin{alignat*}{1}
	G^{-1}\circ\mcCY^!\circ H(\gamma_{10})=\sum_{\beta_{10},\bd_0,\bd_1}\#\cM^0(\gamma_{01};\bd_0,\beta_{10},\bd_1)\bd_1\beta_{01}\bd_0+\sum_{\bd_0,\bd_1}\#\cM^0(\gamma_{01};\bd_0,x_{01},\bd_1)\bd_1y_{01}\bd_0
\end{alignat*}
while
\begin{alignat*}{1}
	\mcCY(\gamma_{10})=\sum_{\substack{\beta_{01}\in\Rc^{l}(\La_0,\La_1)\\\bd_0,\bd_1}}\#\cM^0(\beta_{01};\bd_0,\gamma_{10},\bd_1)\bd_1\beta_{01}\bd_0+\sum_{\bd_0,\bd_1}\#\cM^0(y_{01};\bd_0,\gamma_{10},\bd_1)\bd_1y_{01}\bd_0
\end{alignat*}
Similarly we can compute 
\begin{alignat*}{1}
	G^{-1}\circ\mcCY^!\circ H(x_{01})=\sum_{\beta_{10},\bd_0,\bd_1}\#\cM^0(y_{01};\bd_0,\beta_{10},\bd_1)\bd_1\beta_{01}\bd_0
\end{alignat*}
while
\begin{alignat*}{1}
	\mcCY(x_{01})=\sum\#\cM^0(\beta_{01};\bd_0,x_{01},\bd_1)\bd_1\beta_{01}\bd_0
\end{alignat*}
So in order to get $\mcCY=\big(G^{-1}\circ\mcCY^!\circ H\big)[-n-1]:=G^{-1}[-n-1]\circ\mcCY[-n-1]\circ H[-n-1]$ we need to check that:
\begin{enumerate}
	\item for any $\gamma_{10}\in\Rc(\La_1,\La_0)$, $\beta_{01}\in\Rc^{l}(\La_0,\La_1)$, and their corresponding $\gamma_{01}\in\Rc^{l}(\La_0,\La_1)$ and $\beta_{10}\in\Rc(\La_1,\La_0)$, we have:
	\begin{alignat*}{1}
		\sum_{\bd_0,\bd_1}\#\cM^0(\beta_{01};\bd_0,\gamma_{10},\bd_1)=\sum_{\bd_0,\bd_1}\#\cM^0(\gamma_{01};\bd_0,\beta_{10},\bd_1)
	\end{alignat*}
	\item for any $\beta_{01}\in\Rc^{l}(\La_0,\La_1)$ and its corresponding $\beta_{10}\in\Rc(\La_1,\La_0)$, we have:
	\begin{alignat*}{1}
		\sum_{\bd_0,\bd_1}\#\cM^0(\beta_{01};\bd_0,x_{01},\bd_1)=\sum_{\bd_0,\bd_1}\#\cM^0(y_{01};\bd_0,\beta_{10},\bd_1)
	\end{alignat*}
\end{enumerate}
The point (1) follows from \cite[Theorem 3.6]{EESa}. Indeed the count of rigid bananas with boundary on $\R\times(\La_0\cup\La_1)$ and positive asymptotics at $\beta_{10}$ and $\gamma_{01}$ is in bijective correspondence with the count of rigid strips with boundary on $\R\times(\La_0\cup\La^A_1)$, where $\La_1^A$ is a translation of $\La_1$ in the positive Reeb direction by $A>>0$ such that the only mixed Reeb chords are from $\La_0$ to $\La_1^A$, with a positive asymptotic at $\beta_{10}^A$ and a negative asymptotic at $\gamma_{10}^A$, where $\beta_{10}^A$ and $\gamma_{10}^A$ are the mixed chords of $\La_0\cup\La_1^A$ corresponding to $\beta_{10}$ and $\gamma_{01}$ respectively. By \cite[Theorem 3.6]{EESa}, this count of rigid strips with boundary on $\R\times(\La_0\cup\La_1^A)$ corresponds to the count of strips with boundary on $\R\times\La$ with a positive asymptotic at $\beta$ (the pure chord corresponding to $\beta_{10}$) and a negative asymptotic at $\gamma$. Similarly, the count of rigid bananas with boundary on $\R\times(\La_0\cup\La_1)$ and positive asymptotics at $\beta_{01}$ and $\gamma_{10}$ is in bijective correspondence with the count of rigid strips with boundary on $\R\times(\La_0\cup\La_1^{-A})$ with a positive asymptotic at $\beta_{01}^{-A}$ and negative asymptotic at $\gamma_{01}^{-A}$. This last count is also in bijective correspondence with the count of rigid strips with boundary on $\R\times\La$, positively asymptotic to $\beta$ and negatively asymptotic to $\gamma$.

For (2), by \cite[Theorem 3.6]{EESa} a rigid strip with boundary on $\R\times(\La_0\cup\La_1)$, a positive asymptotic at $\beta_{01}$ and a negative asymptotic at $x_{01}$ corresponds to a generalized disc consisting of a disc with boundary on $\R\times\La$ having a positive asymptotic at $\beta$ together with a negative gradient flow line of $f$ from the minimum (remember that the maximum Reeb chord $x_{01}\in\Rc(\La_0,\La_1)$ corresponds to the minimum critical point $x$ of $f$) to a point on the boundary of the disc. But there is no non constant negative gradient flow line flowing from the minimum so the boundary of the disc has to pass by $x$ (more precisely the boundary crosses $\R\times\{x\}$ as we are in the cylindrical setting). On the other side, a rigid banana with boundary on $\R\times(\La_0\cup\La_1)$ and positively asymptotic to $\beta_{10}$ and $y_{01}$ corresponds to a rigid generalized disc consisting of a disc with boundary on $\R\times\La$ positively asymptotic to $\beta$ together with a negative gradient flow line of $f$ from a point on the boundary of the disc to the maximum critical point $y$. Again, this flow line must be constant and the boundary of the disc must pass through $\R\times\{y\}$. Now we check that the count of these two types of discs is the same.

If $\dim\La=1$: assume $\cM^0(\beta_{01};\bd_0,x_{01},\bd_1)$ is not empty, i.e. assume that there is a pseudo-holomorphic disc with boundary on $\R\times\La$ passing through $\R\times\{x\}$, positively asymptotic to a chord $\beta$ and negatively asymptotic to the words $\bd_0,\bd_1$. Note that the boundary of the disc is transverse to $\R\times\{x\}$. In particular, it passes also through all $\R\times\{\text{pt}\}$ for every points $\text{pt}$ sufficiently close to $x$ on $\La$. If the function $f$ is chosen so that its critical points are sufficiently close to each other, we get the equality in (2).

If $\dim\La\geq2$, we use the results in \cite{DR:surgery}. Denote $\cM^{\{*\}}_\La(\beta;\bd_0,\bd_1)$ the moduli space of pseudo-holomorphic discs with boundary on $\R\times\La$, positively asymptotic to $\beta$, negatively asymptotic to the words $\bd_0$ and $\bd_1$, and having a marked point $*$ on the boundary of the disc in the domain which is situated between the puncture mapped to the last Reeb chord of the word $\bd_0$ and the puncture mapped to the first Reeb chord of the word $\bd_1$. Note that there is still an action of $\R$ on this moduli space.
There is a smooth evaluation map
\begin{alignat*}{1}
	ev:\cM^{\{*\}}_\La(\beta;\bd_0,\bd_1)\to\La
\end{alignat*}
given by $ev(u)=u(*)$. Note that the evaluation map takes values in $\La$ (instead of the general $\R\times\La$) as we are in the cylindrical setting. 
By a generalization of \cite[Chapter 3]{McDS}, every point of $\La$ is a regular value of the evaluation map, and so are in particular the minimum and maximum Morse critical points $x$ and $y$. Now, using the transversality results in \cite[Section 8]{DR:surgery}, we have that the evaluation map is proper and thus $\#ev^{-1}(x)=\#ev^{-1}(y)$. Note finally that the $0$ dimensional moduli spaces $\cM_\La^0(\beta_{01};\bd_0,x_{01},\bd_1)$  and $\cM_\La^0(y_{01};\bd_0,\beta_{10},\bd_1)$ are respectively identified with $ev^{-1}(x)$ and $ev^{-1}(y)$. 

\end{proof}

\section{Products and higher order structure maps}\label{sec:Product}
%%%%%%%%%%%%%%%%%%%%%%%%%%%%%%%%%%%%%%%%%%%%%%%%%
In this section we define chain complexes $\widecheck{C}_-^{cyc}(\La_0,\La_1)$ and $\widehat{C}_+^{cyc}(\La_0,\La_1)$ over $\Z_2$, obtained from $\widecheck{C}_-(\La_0,\La_1)$ and $\widehat{C}_+(\La_0,\La_1)$ by the bimodule tensor product with the diagonal bimodule $\Ac$. We show that these complexes admit product structures and that the Calabi-Yau morphism \eqref{CYmap} induced on them preserves the products in homology.
\begin{nota}
	In the following we will consider Reeb chords with boundary on a $3$-copy (and even more) of a Legendrian. Previously we denoted $\gamma_{01}$ for a Reeb chord from $\La_1$ to $\La_0$ and $\gamma_{10}$ for the corresponding Reeb chords from $\La_0$ to $\La_1$. Unless specified, this ``correspondence'' doesn't apply anymore in this section, i.e. we will denote Reeb chords of the $3$-copy by $\gamma_{ij}$ with $1\leq i\neq j\leq2$, but $\gamma_{ij}$ is not necessarily the chord from $\La_j$ to $\La_i$ which corresponds to $\gamma_{ji}$, unless specified. But we will still use $x_{ij}$ and $y_{ij}$ to denote maximum, respectively minimum, Morse Reeb chords between different $2$-copies.
\end{nota}

\subsection{Chain complexes for Hochschild homology and cohomology}
%%%%%%%%%%%%%%%%%%%%%%%%%%%%%%%%%%%%%%%%%%%%%%%%%%%%%%%%%%%%%%%%%%%
	
We start by describing the chain complex $(\widehat{C}_+^{cyc}(\La_0,\La_1),\widehat{\mfm}_1)$, where by abuse of notation we also denote $\widehat{\mfm}_1$ the differential but this should not create any confusion. The vector space $\widehat{C}_+^{cyc}(\La_0,\La_1)$ is infinite dimensional, generated over $\Z_2$ by elements of the form $\gamma_{10}\bs{a}$ and $x_{01}\bs{a}$ where $\bs{a}=a_1\dots a_k$ denotes a word of Reeb chords of $\La$. The differential is given by
\begin{alignat*}{1}
	\widehat{\mfm}_1(\gamma_{10}\bs{a})=&\sum\limits_{\beta_{10},\bs{\delta}_0,\bs{\delta}_1}\#\cM^0_{\La_{01}}(\beta_{10};\bd_0,\gamma_{10},\bd_1)\cdot\beta_{10}\bd_0\bs{a}\bd_1+x_{01}\gamma\bs{a}+x_{01}\bs{a}\gamma\\
	&+\sum_{j=1}^k\gamma_{10}a_1\dots a_{j-1}\partial_\Ac(a_j)a_{j+1}\dots a_k
\end{alignat*}
where $\gamma$ is the pure Reeb chord corresponding to $\gamma_{10}$, and $\widehat{\mfm}_1(x_{01}\bs{a})=0$. See Figure \ref{fig:Hochschild}.
Note that this complex computes the Hochschild homology of $\Ac$, by definition. Indeed, denote $\Ac^e=\Ac\otimes\Ac^{op}$ and observe that the $\Ac$-bimodule $\widehat{C}_+(\La_0,\La_1)$ can be viewed as a right DG $\Ac^e$-module with module structure given by $\widehat{c}\cdot(a,b)=b\widehat{c}a$, for any $\widehat{c}\in\widehat{C}_+(\La_0,\La_1)$,
while the algebra $\Ac$ can be viewed as a DG left $\Ac^e$-module with $(a,b)\cdot a_1\dots a_k=aa_1\dots a_kb$.
By definition, $\widehat{C}_+^{cyc}(\La_0,\La_1)$ is equal to the tensor product $\widehat{C}_+(\La_0,\La_1)\otimes_{\Ac^e}\Ac$ of the left and right DG $\Ac^e$-modules. Remember moreover that $\widehat{C}_+(\La_0,\La_1)$ is a semifree resolution of $\Ac$, so the homology of $\widehat{C}_+(\La_0,\La_1)\otimes_{\Ac^e}\Ac$ is isomorphic to the Hochschild homology of $\Ac$.
Similarly, the complex $\big(\widecheck{C}_-^{cyc}(\La_0,\La_1),\widecheck{\mfm}_1\big)$ is generated over $\Z_2$ by elements $\gamma_{01}\bs{a}$ and $y_{01}\bs{a}$, and 
\begin{alignat*}{1}
	\widecheck{\mfm}_1(\gamma_{01}a_1\dots a_k)=&\sum\limits_{\beta_{01},\bs{\delta}_0,\bs{\delta}_1}\#\cM^0_{\La_{01}}(\beta_{01};\bd_0,\gamma_{01},\bd_1)\cdot\beta_{01}\bd_0a_1\dots a_k\bd_1\\
	&+\sum_{j=1}^k\gamma_{01}a_1\dots a_{j-1}\partial_\Ac(a_j)a_{j+1}\dots a_k
\end{alignat*}
and $\widehat{\mfm}_1(y_{01}\bs{a})=\sum\limits_{\gamma\in\Rc(\La)}\gamma_{01}\gamma\bs{a}+\gamma_{01}\bs{a}\gamma$, where $\gamma_{10}$ is the mixed chord corresponding to $\gamma$. Again, we can check that
\begin{alignat*}{1}
	\widecheck{C}_-^{cyc}(\La_0,\La_1)\simeq\widecheck{C}_-(\La_0,\La_1)\otimes_{\Ac^e}\Ac\simeq\RHom_{\Ac^e}(\Ac,\Ac\otimes\Ac)\otimes_{\Ac^e}\Ac\simeq\RHom_{\Ac^e}(\Ac,\Ac)
\end{alignat*}
and thus the complex $(\widecheck{C}_-^{cyc}(\La_0,\La_1),\widecheck{\mfm}_1)$ computes the Hochschild cohomology of $\Ac$. By Theorem \ref{teo:CY} we thus get an isomorphism between Hochschild homology and cohomology of the C-E algebra of an horizontally displaceable Legendrian sphere in $\R\times P$.
\begin{figure}[ht]  
	\labellist
	\footnotesize
	\pinlabel $\R\times(\La_0\cup\textcolor{red}{\La_1})$ at 220 80
	\pinlabel $\gamma_{10}$ at 92 128
	\pinlabel $\beta_{10}$ at 92 30
	\pinlabel $\bs{\delta}_0$ at 130 30
	\pinlabel $\textcolor{red}{\bs{\delta}_1}$ at 40 30
	\pinlabel $a_1$ at 114 140
	\pinlabel $a_2$ at 117 155
	\pinlabel $a_k$ at 70 143
	\endlabellist
	\normalsize
	\begin{center}\includegraphics[width=6cm]{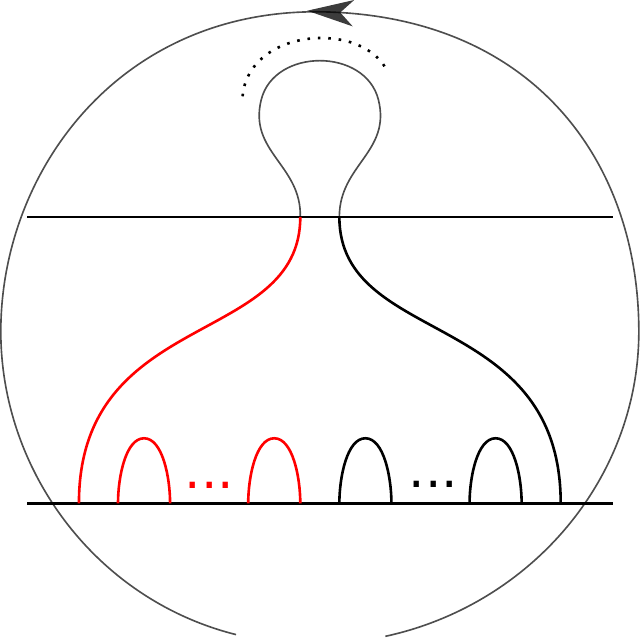}\end{center}
	\caption{Pseudo-holomorphic disc contributing to $\widehat{\mfm}(\gamma_{10}a_1\dots a_k)$. Observe that the ``bubble'' at the top is not a pseudo-holomorphic disc but a way to write the cyclic word $\gamma_{10}a_1\dots a_k$. The contribution $\beta_{10}\bs{\delta}_0a_1\dots a_k\bs{\delta}_1$ of the disc is given by the output mixed chord $\beta_{10}$ followed by a word of Reeb chords as they appear along the boundary of the disc when following it counterclockwise from $\beta_{10}$.}
	\label{fig:Hochschild}
\end{figure}

\subsection{Product structures}\label{sec:product}
%%%%%%%%%%%%%%%%%%%%%%%%%%%%%%%%
In \cite{L2} the author defined a product structure and more generally $A_\infty$-structure maps on the complex $\RFC(\La_0,\La_1)$ in the case when $\La_0,\La_1$ admit augmentations of their C-E algebras.
When $\La_1$ is a small negative push-off of $\La_0$, this product extends naturally to the chain complex $\RFC^{cyc}(\La_0,\La_1)$ (cone of the banana map induced on the cyclic model $\widehat{C}^{cyc}(\La_0,\La_1)$ and with values in $\widecheck{C}^{cyc}(\La_0,\La_1)$).
This product is defined by counting the same type of pseudo-holomorphic discs as the one counted to get a product on  $\RFC(\La_0,\La_1)$ but keeping the negative pure Reeb chords asymptotics as coefficients instead of turning them into elements of $\Z_2$ with augmentations. It doesn't seem possible however to define a product directly on the $\Ac$-bimodule $\RFC(\La_0,\La_1)$, because there is no good way to deal with the coefficients (see \cite{CDRGG:noncomm} for constructions of $A_\infty$ structures with coefficients in a non-commutative algebra).
Note that the pseudo-holomorphic discs we count have boundary on the cylinder over a $3$-copy $\La_0\cup\La_1\cup\La_2$ of a Legendrian $\La$. This $3$-copy is given by $\La_0=\La$ and then $\La_1$ and $\La_2$ are small perturbed negative push-offs of $\La$ ($\La_2$ is a slightly more negative push-off than $\La_1$) such that $\La_0\cup\La_1$, $\La_0\cup\La_2$ and $\La_1\cup\La_2$ are $2$-copies as described in Section \ref{sec:2copy}.
Observe that $\widecheck{C}_-^{cyc}(\La_0,\La_1)$ is a subcomplex of $\RFC^{cyc}(\La_0,\La_1)$, and that the restriction of the product in $\RFC^{cyc}(\La_0,\La_1)$ to $\widecheck{C}_-^{cyc}$ takes values in $\widecheck{C}_-^{cyc}$, see \cite{L2}. So we get a well-defined product on $\widecheck{C}_-^{cyc}$. More precisely, given $\La_0\cup\La_1\cup\La_2$ a $3$-copy of $\La$ there is a degree $0$ map $\widecheck{\mfm}_2:\widecheck{C}_-^{cyc}(\La_1,\La_2)\otimes\widecheck{C}_-^{cyc}(\La_0,\La_1)\to\widecheck{C}_-^{cyc}(\La_0,\La_2)$ defined by
\begin{alignat*}{1}
\widecheck{\mfm}_2(\gamma_{12}\bs{a}_1,\gamma_{01}\bs{a_0})=\sum\limits_{\substack{\gamma_{02}\\\bd_0,\bd_1,\bd_2}}\#\cM^0_{\La_{012}}(\gamma_{02};\bd_0,\gamma_{01},\bd_1,\gamma_{12},\bd_2)\cdot\gamma_{02}\bd_0\bs{a}_0\bd_1\bs{a}_1\bd_2
\end{alignat*}
where $\gamma_{ij}$ can also be the minimum Morse Reeb chord. And this map $\widecheck{\mfm}_2$ satisfies the Leibniz rule $\widecheck{\mfm}_1\circ\widecheck{\mfm}_2+\widecheck{\mfm}_2(\id\otimes\widecheck{\mfm}_1)+\widecheck{\mfm}_2(\widecheck{\mfm}_1\otimes\id)=0$.
This product is the standard ``two negative inputs one positive output'' product, see Figure \ref{fig:standard_prod}. As for the differential, the word of pure Reeb chords in the output element is obtained by following the boundary of the pseudo-holomorphic disc counterclockwise from the mixed output Reeb chord.
\begin{figure}[ht]  
	\labellist
	\footnotesize
	\pinlabel $\R\times(\La_0\cup\textcolor{red}{\La_1}\cup\textcolor{blue}{\La_2})$ at 140 50
	\pinlabel out at 60 90
	\pinlabel in at 35 -7
	\pinlabel in at 85 -7
	\endlabellist
	\normalsize
	\begin{center}\includegraphics[width=3cm]{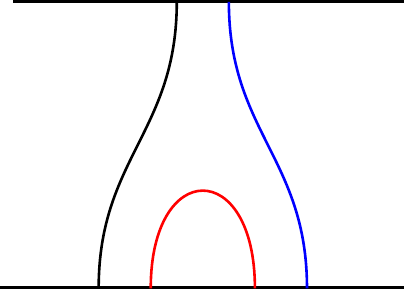}\end{center}
	\caption{Pseudo-holomorphic building with boundary on $\R\times(\La_0\cup\La_1\cup\La_2)$ contributing to the product on $\widecheck{C}_-^{cyc}(\La_0,\La_1)$.}
	\label{fig:standard_prod}
\end{figure}
Because of the coefficients in the C-E algebra, verifying the Leibniz rule for the product on $\widecheck{C}_-^{cyc}(\La_0,\La_1)$ involves slightly more terms than for the Leibniz rule for the product on the $Z_2$ vector space $C(\La_0,\La_1)$ as done in \cite{L2}, so we detailed it now. We want to prove
\begin{alignat*}{1}
	\widecheck{\mfm}_1\circ\widecheck{\mfm}_2(\gamma_{12}\bs{a}_1,\gamma_{01}\bs{a}_0)+\widecheck{\mfm}_2\big(\gamma_{12}\bs{a}_1,\widecheck{\mfm}_1(\gamma_{01}\bs{a}_0)\big)+\widecheck{\mfm}_2\big(\widecheck{\mfm}_1(\gamma_{12}\bs{a}_1),\gamma_{01}\bs{a}_0\big)=0
\end{alignat*}
The left-hand side can be rewritten
\begin{alignat*}{1}
	&\sum\limits_{\substack{\gamma_{02}\\\bd_0,\bd_1,\bd_2}}\#\cM^0_{\La_{012}}(\gamma_{02};\bd_0,\gamma_{01},\bd_1,\gamma_{12},\bd_2)\cdot\widecheck{\mfm}_1\big(\gamma_{02}\bd_0\bs{a}_0\bd_1\bs{a}_1\bd_2\big)\\
	&\hspace{1cm}+\widecheck{\mfm}_2\big(\gamma_{12}\bs{a}_1,\widecheck{\mfm}_1(\gamma_{01})\bs{a}_0+\gamma_{01}\partial_\Ac(\bs{a}_0)\big)+\widecheck{\mfm}_2\big(\widecheck{\mfm}_1(\gamma_{12})\bs{a}_1+\gamma_{01}\partial_\Ac(\bs{a}_0),\gamma_{01}\bs{a}_0\big)\\
	&=\sum\#\cM^0_{\La_{012}}(\gamma_{02};\bd_0,\gamma_{01},\bd_1,\gamma_{12},\bd_2)\cdot\Big(\widecheck{\mfm}_1(\gamma_{02})\bd_0\bs{a}_0\bd_1\bs{a}_1\bd_2+\gamma_{02}\partial_\Ac(\bd_0\bs{a}_0\bd_1\bs{a}_1\bd_2)\Big)\\
	&\hspace{1cm}+\widecheck{\mfm}_2\big(\gamma_{12}\bs{a}_1,\widecheck{\mfm}_1(\gamma_{01})\bs{a}_0\big)+\widecheck{\mfm}_2\big(\widecheck{\mfm}_1(\gamma_{12})\bs{a}_1,\gamma_{01}\bs{a}_0\big)\\
	&\hspace{1cm}+\widecheck{\mfm}_2\big(\gamma_{12}\bs{a}_1,\gamma_{01}\partial_\Ac(\bs{a}_0)\big)+\widecheck{\mfm}_2\big(\gamma_{01}\partial_\Ac(\bs{a}_0),\gamma_{01}\bs{a}_0\big)
\end{alignat*}
We separate the terms having the boundary of a $\bs{a}_i$ from the other terms to get
\begin{alignat*}{1}
	&=\sum\#\cM^0_{\La_{012}}(\gamma_{02};\bd_0,\gamma_{01},\bd_1,\gamma_{12},\bd_2)\cdot\Big(\widecheck{\mfm}_1(\gamma_{02})\bd_0\bs{a}_0\bd_1\bs{a}_1\bd_2+\gamma_{02}\partial_\Ac(\bd_0)\bs{a}_0\bd_1\bs{a}_1\bd_2\\
	&\hspace{7cm}+\bd_0\bs{a}_0\partial_\Ac(\bd_1)\bs{a}_1\bd_2+\bd_0\bs{a}_0\bd_1\bs{a}_1\partial_\Ac(\bd_2)\Big)\\
	&+\widecheck{\mfm}_2\big(\gamma_{12}\bs{a}_1,\widecheck{\mfm}_1(\gamma_{01})\bs{a}_0\big)+\widecheck{\mfm}_2\big(\widecheck{\mfm}_1(\gamma_{12})\bs{a}_1,\gamma_{01}\bs{a}_0\big)\\
	&+\sum\#\cM^0_{\La_{012}}(\gamma_{02};\bd_0,\gamma_{01},\bd_1,\gamma_{12},\bd_2)\cdot\Big(\gamma_{02}\bd_0\partial_\Ac(\bs{a}_0)\bd_1\bs{a}_1\bd_2+\bd_0\bs{a}_0\bd_1\partial_\Ac(\bs{a}_1)\bd_2\Big)\\
	&+\widecheck{\mfm}_2\big(\gamma_{12}\bs{a}_1,\gamma_{01}\partial_\Ac(\bs{a}_0)\big)+\widecheck{\mfm}_2\big(\gamma_{01}\partial_\Ac(\bs{a}_0),\gamma_{01}\bs{a}_0\big)
\end{alignat*}
The two first lines vanish as the algebraic contributions of pseudo-holomorphic buildings in the boundary of moduli spaces of type $\cM_{\La_{012}}^1(\gamma_{02};\bd_0,\gamma_{01},\bd_1,\gamma_{12},\bd_2)$. The two last lines vanish also, because by definition of the product we have
\begin{alignat*}{1}
	\sum\limits_{\substack{\gamma_{02}\\\bd_0,\bd_1,\bd_2}}\#\cM^0_{\La_{012}}(\gamma_{02};\bd_0,\gamma_{01},\bd_1,\gamma_{12},\bd_2)\cdot\gamma_{02}\bd_0\partial_\Ac(\bs{a}_0)\bd_1\bs{a}_1\bd_2=\widecheck{\mfm}_2\big(\gamma_{12}\bs{a}_1,\gamma_{01}\partial_\Ac(\bs{a}_0)\big)
\end{alignat*}
and 
\begin{alignat*}{1}
	\sum\limits_{\substack{\gamma_{02}\\\bd_0,\bd_1,\bd_2}}\#\cM^0_{\La_{012}}(\gamma_{02};\bd_0,\gamma_{01},\bd_1,\gamma_{12},\bd_2)\cdot\gamma_{02}\bd_0\bs{a}_0\bd_1\partial_\Ac(\bs{a}_1)\bd_2=\widecheck{\mfm}_2\big(\gamma_{12}\partial_\Ac(\bs{a}_1),\gamma_{01}\bs{a}_0\big)
\end{alignat*}
and so the Leibniz rule is satisfied.
Moreover, it follows from \cite[Theorem 5.5]{EESa} that the product $\widecheck{\mfm}_2$ is unital with the unit given by the minimum Morse Reeb chord, i.e. we have $\widecheck{\mfm}_1(y_{01})=\widecheck{\mfm}_1(y_{12})=0$ and $\widecheck{\mfm}_2(\gamma_{12}\bs{a}_1,y_{01})=\gamma_{02}\bs{a}_1$ and $\widecheck{\mfm}_2(y_{12},\gamma_{01}\bs{a}_0)=\gamma_{02}\bs{a}_0$, for all $\gamma_{01}\in\Rc(\La_0,\La_1)$, $\gamma_{12}\in\Rc(\La_1,\La_2)$ and $\gamma_{02}\in\Rc(\La_0,\La_2)$ corresponding to the \textit{same} chord (either a long chord or a Morse chord).
\begin{nota}
	In the following, we will write only $\gamma_{01}$ instead of the more general $\gamma_{01}\bs{a}$ for an element in $\widecheck{C}_-^{cyc}(\La_0,\La_1)$. This is just in order to reduce a bit the length of formulas. We will also do the same for elements in $\widehat{C}^{cyc}_+(\La_0,\La_1)$. In particular we'll define a product $\widehat{\mfm}_2$ only for pairs of inputs $(\gamma_{21},\gamma_{10})$, where the inputs can also be maximum Morse Reeb chords, but keeping in mind that the rule to define more generally $\widehat{\mfm}_2(\gamma_{21}\bs{a}_1,\gamma_{10}\bs{a}_0)$ is the same as for the product $\widecheck{\mfm}_2$. Namely, the output will contain the words $\bs{a}_0$ and $\bs{a}_1$ in a larger word of pure Reeb chords obtained by following the boundary of pseudo-holomorphic discs involved in the definition of $\widehat{\mfm}_2(\gamma_{21},\gamma_{10})$ counter clockwise.
\end{nota}

So let us now construct this product on $\widehat{C}^{cyc}_+(\La_0,\La_1)$. From a $3$-copy of $\La$ we define a map
\begin{alignat*}{1}
	\widehat{\mfm}_2:\widehat{C}^{cyc}_+(\La_1,\La_2)\otimes\widehat{C}^{cyc}_+(\La_0,\La_1)\to\widehat{C}^{cyc}_+(\La_0,\La_2)
\end{alignat*}
by counting $2$-levels pseudo-holomorphic buildings as shown on Figure \ref{fig:secondary_prod}. More precisely, for generators $\gamma_{10},x_{01}$ of $\widehat{C}^{cyc}_+(\La_0,\La_1)$ and generators $\gamma_{21},x_{12}$ of $\widehat{C}^{cyc}_+(\La_1,\La_2)$ we have:
\small
\begin{alignat*}{1}
	&\widehat{\mfm}_2(\gamma_{21},\gamma_{10})=\\
	&\sum_{\substack{\beta_{12}\in\Rc^{l}(\La_1,\La_2)\\\cup\{y_{12}\}}}\sum\limits_{\substack{\bd_0,\bd_1,\bd_1'\\ \bd_2,\bd_2'}}\#\cM^0_{\La_{012}}(x_{02};\bd_0,\gamma_{10},\bd_1,\beta_{12},\bd_2')\cdot\#\cM^0_{\La_{12}}(\beta_{12};\bd_1',\gamma_{21},\bd_2)\cdot x_{02}\bd_0\bd_1\bd_1'\bd_2\bd_2'\\
	&+\sum_{\gamma_{20}}\sum_{\substack{\beta_{12}\in\Rc^{l}(\La_1,\La_2)\\\cup\{y_{12}\}}}\sum\limits_{\substack{\bd_0,\bd_1,\bd_1'\\ \bd_2,\bd_2'}}\#\cM^0_{\La_{012}}(\gamma_{20};\bd_0,\gamma_{10},\bd_1,\beta_{12},\bd_2')\#\cM^0_{\La_{12}}(\beta_{12};\bd_1',\gamma_{21},\bd_2)\cdot \gamma_{20}\bd_0\bd_1\bd_1'\bd_2\bd_2'
\end{alignat*}
\begin{alignat*}{1}
	&\widehat{\mfm}_2(\gamma_{21},x_{01})=\sum_{\beta_{12}}\sum\limits_{\substack{\bd_0,\bd_1,\bd_1'\\ \bd_2,\bd_2'}}\#\cM^0_{\La_{012}}(x_{02};\bd_0,x_{01},\bd_1,\beta_{12},\bd_2')\#\cM^0_{\La_{12}}(\beta_{12};\bd_1',\gamma_{21},\bd_2)\cdot x_{02}\bd_0\bd_1\bd_1'\bd_2\bd_2'\\
	&\widehat{\mfm}_2(x_{12},\gamma_{10})=\sum_{\beta_{12}}\sum\limits_{\substack{\bd_0,\bd_1,\bd_1'\\ \bd_2,\bd_2'}}\#\cM^0_{\La_{012}}(x_{02};\bd_0,\gamma_{10},\bd_1,\beta_{12},\bd_2')\#\cM^0_{\La_{12}}(\beta_{12};\bd_1',x_{12},\bd_2)\cdot x_{02}\bd_0\bd_1\bd_1'\bd_2\bd_2'\\
	&\hspace{15mm}+\sum_{\gamma_{20},\beta_{12}}\sum\limits_{\substack{\bd_0,\bd_1,\bd_1'\\ \bd_2,\bd_2'}}\#\cM^0_{\La_{012}}(\gamma_{20};\bd_0,\gamma_{10},\bd_1,\beta_{12},\bd_2')\#\cM^0_{\La_{12}}(\beta_{12};\bd_1',x_{12},\bd_2)\cdot \gamma_{20}\bd_0\bd_1\bd_1'\bd_2\bd_2'\\
	&\widehat{\mfm}_2(x_{12},x_{01})=0
\end{alignat*}
\normalsize
Observe that in the definition of $\widehat{\mfm}_2(\gamma_{21},x_{01})$ and $\widehat{\mfm}_2(x_{12},\gamma_{10})$, the ``connecting'' chord $\beta_{12}$ will automatically be in $\Rc^{l}(\La_1,\La_2)\cup\{y_{12}\}$. In the first case, it has to be a Morse chord for action reasons, and can not be $x_{12}$ for degree reason. In the second case it can not be $x_{12}$ also for degree reasons.

\begin{figure}[ht]  
	\labellist
	\footnotesize
	\pinlabel $x_{02}^{\text{out}}$ at 85 175
	\pinlabel in at 40 175
	\pinlabel in at 100 90
	\pinlabel out at 230 75
	\pinlabel in at 205 175
	\pinlabel in at 145 90
	\pinlabel $x_{02}^{\text{out}}$ at 295 175
	\pinlabel $x_{01}^{\text{in}}$ at 270 70
	\pinlabel in at 355 90
	\pinlabel $x_{02}^{\text{out}}$ at 450 175
	\pinlabel $x_{12}^{\text{in}}$ at 425 -10
	\pinlabel in at 400 175
	\pinlabel in at 520 175
	\pinlabel out at 541 75
	\pinlabel $x_{12}^{\text{in}}$ at 495 -10
	\pinlabel \textit{not} at 120 20
	\pinlabel $x_{12}$ at 120 8
	\endlabellist
	\normalsize
	\begin{center}\includegraphics[width=12cm]{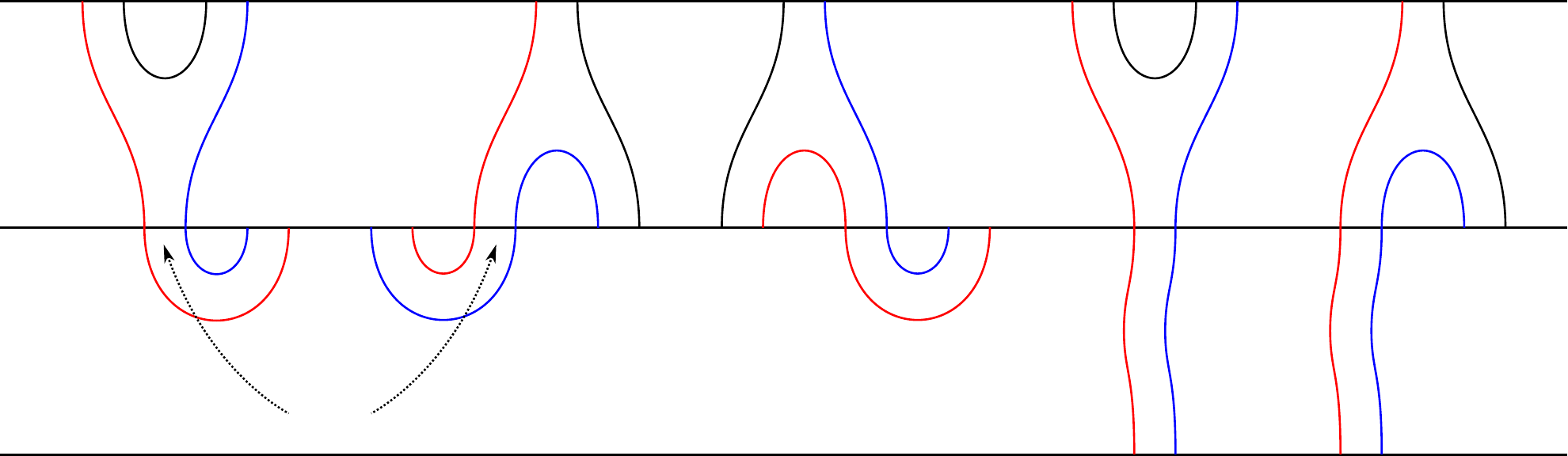}\end{center}
	\caption{Pseudo-holomorphic buildings contributing to the product $\widehat{\mfm}_2$ on $\widehat{C}^{cyc}_+(\La_0,\La_1)$.}
	\label{fig:secondary_prod}
\end{figure} 
By \cite[Proposition 2]{L2} we can check that this map $\widehat{\mfm}_2$ is of degree $0$. Then we have:
\begin{prop}
	The map $\widehat{\mfm}_2$ satisfies $\widehat{\mfm}_1\circ\widehat{\mfm}_2+\widehat{\mfm}_2(\id\otimes\widehat{\mfm}_1)+\widehat{\mfm}_2(\widehat{\mfm}_1\otimes\id)=0$, i.e. $\widehat{\mfm}_2$ descends to a well-defined map on homology.
\end{prop}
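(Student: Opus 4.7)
The plan is to analyze the boundary of one-dimensional moduli spaces of two-level pseudoholomorphic buildings analogous to those defining $\widehat{\mfm}_2$, in close analogy with the Leibniz rule for $\widecheck{\mfm}_2$ verified earlier in the section. Fix inputs $g_{10}\in\{\gamma_{10},x_{01}\}$ and $g_{21}\in\{\gamma_{21},x_{12}\}$, an output $g_{20}\in\{x_{02},\gamma_{20}\}$, and pure chord words $\bs{\delta}_0,\bs{\delta}_1,\bs{\delta}_1',\bs{\delta}_2,\bs{\delta}_2'$. Consider all 2-level buildings $(u,v)$ with $u\in\cM_{\La_{012}}(g_{20};\bs{\delta}_0,g_{10},\bs{\delta}_1,\beta_{12},\bs{\delta}_2')$ and $v\in\cM_{\La_{12}}(\beta_{12};\bs{\delta}_1',g_{21},\bs{\delta}_2)$, summed over intermediate Reeb chords $\beta_{12}$, such that $\dim u+\dim v=1$. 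By SFT compactness each connected component of this family is a compact $1$-manifold with boundary.

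I would then match each boundary stratum with a term of the Leibniz identity. Breakings of $u$ at the positive input $g_{10}$ give contributions to $\widehat{\mfm}_2(g_{21},\bs{\D}_1(g_{10}))$, and symmetrically breakings of $v$ at $g_{21}$ give contributions to $\widehat{\mfm}_2(\bs{\D}_1(g_{21}),g_{10})$; when $g_{ij}=x_{ij}$ no such breakings occur by a three-copy analog of Lemma~\ref{lem:bananas}(3), consistent with $\widehat{\mfm}_1(x_{ij})=0$. Breakings of $u$ at the output $g_{20}$ contribute to $\widehat{\mfm}_1\circ\widehat{\mfm}_2$, with the case $g_{20}=x_{02}$ killed again by Lemma~\ref{lem:bananas}(3). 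Breakings at pure chord asymptotics on either level combine to give the $\partial_\Ac$-type terms acting on the pure words $\bs{a}_0,\bs{a}_1$ in the cyclic inputs, exactly as in the $\widecheck{\mfm}_2$ analysis above. The extra $\gamma\cdot x_{01}+x_{01}\cdot\gamma$ and $\gamma\cdot x_{12}+x_{12}\cdot\gamma$ contributions appearing in $\widehat{\mfm}_1$ applied to $\gamma_{10}$ or $\gamma_{21}$ emerge from degenerations involving a rigid banana with an $x_{ij}$ positive puncture, counted via Lemma~\ref{lem:bananas}(2).

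The main obstacle will be the careful accounting of degenerations at the intermediate asymptotic $\beta_{12}$. A break of $u$ at its negative puncture $\beta_{12}$ produces a three-level configuration whose additional level is a rigid strip/banana with positive $\beta_{12}'$ and negative $\beta_{12}$, and a symmetric break of $v$ at its positive $\beta_{12}$ produces the analogous three-level configuration from the other side. These contributions must be shown to cancel: regrouping the additional level either with the top or with the bottom yields two rigid configurations that are the two boundary points of the same $1$-dimensional secondary moduli space $\cM^1_{\La_{12}}(\beta_{12}';\bs{\delta}_1'',g_{21},\bs{\delta}_2'')$ (or its top-level analog), so their total count vanishes mod $2$. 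The intermediate chord $\beta_{12}'$ is forced to lie in $\Rc^{l}(\La_1,\La_2)\cup\{y_{12}\}$ by the action and degree constraints already noted in the definition of $\widehat{\mfm}_2$, together with Lemma~\ref{lem:bananas}(3).

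Summing all boundary contributions over $\beta_{12}$ and the various pure chord words, and using that the total count of endpoints of a compact $1$-manifold vanishes mod $2$, produces exactly the identity $\widehat{\mfm}_1\circ\widehat{\mfm}_2+\widehat{\mfm}_2(\id\otimes\widehat{\mfm}_1)+\widehat{\mfm}_2(\widehat{\mfm}_1\otimes\id)=0$. The bookkeeping for the pure chord parts of the cyclic words is a straightforward extension of the argument already made for $\widecheck{\mfm}_2$ earlier in this section, and so need not be reproduced in detail.
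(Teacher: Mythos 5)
Your overall strategy coincides with the paper's: both proofs analyze the boundary of the compactified one-dimensional products $\cM^1(g_{20};\bd_0,g_{10},\bd_1,\beta_{12},\bd_2')\times\cM^0(\beta_{12};\bd_1',g_{21},\bd_2)$ and $\cM^0(\cdots)\times\cM^1(\cdots)$, and most of your matching of boundary strata with Leibniz terms (breakings at the output, at the inputs, at pure chords) agrees with the paper's case analysis. The problem is in your third paragraph, and it lands exactly on the one subtle point of this argument.

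You assert that all degenerations at the intermediate asymptotic cancel in pairs, and that the newly created connecting chord is forced to lie in $\Rc^{l}(\La_1,\La_2)\cup\{y_{12}\}$. This is false. Take $\beta_{12}$ a long chord and let the one-dimensional banana $v\in\cM^1_{\La_{12}}(\beta_{12};\bd_1',\gamma_{21},\bd_2)$ break into a strip in $\cM^0(\beta_{12};\cdot,x_{12},\cdot)$ sitting above a banana in $\cM^0(x_{12};\cdot,\gamma_{21},\cdot)$: the banana exists by Lemma~\ref{lem:bananas}(2), and the strip is excluded neither by action (a long chord is much longer than $x_{12}$) nor by Lemma~\ref{lem:bananas}(3), which only kills strips with a \emph{positive} puncture at the maximum Morse chord. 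The resulting three-level building is a boundary point of $\cM^0(g_{20};\ldots,\beta_{12},\ldots)\times\cM^1(\beta_{12};\ldots,\gamma_{21},\ldots)$, but it is \emph{not} a boundary point of any $\cM^1(g_{20};\ldots,\xi_{12},\ldots)\times\cM^0(\xi_{12};\ldots,\gamma_{21},\ldots)$, precisely because the sum defining $\widehat{\mfm}_2$ excludes $\xi_{12}=x_{12}$. So these boundary points do not cancel --- and they must not: they are exactly what produces the contribution $\widehat{\mfm}_2\big(\gamma_2x_{12}+x_{12}\gamma_1,\gamma_{10}\big)$, i.e.\ the $\ba_1^x$-component of $\widehat{\mfm}_1$ applied to the second input inside $\widehat{\mfm}_2(\widehat{\mfm}_1(\gamma_{21}),\gamma_{10})$ (building \textbf{F} in Figure~\ref{fig:sec_prod_leibniz1} of the paper). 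Your second paragraph attributes the $\gamma x_{12}+x_{12}\gamma$ terms to ``degenerations involving a rigid banana with an $x_{ij}$ positive puncture'', but those degenerations \emph{are} intermediate breakings of $v$, which your third paragraph then declares to cancel; the two claims are inconsistent, and if all intermediate breakings really cancelled, the Leibniz identity would fail by exactly these terms. (A related degree/action analysis, which the paper carries out for its building \textbf{D}, is also needed to locate the $x_{01}$-part of $\widehat{\mfm}_1(\gamma_{10})$: there the connecting chord from $\La_2$ to $\La_1$ is forced to be $y_{12}$ and the one from $\La_1$ to $\La_0$ to be $x_{01}$.) As written, the proposal does not close up the identity.
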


\begin{proof}
We prove the proposition for each type of pair of inputs. For a pair of inputs $(\gamma_{21},\gamma_{10})$, we obtain the Leibniz rule by considering the algebraic contributions of pseudo-holomorphic buildings in the boundary of the compactification of the following products of moduli spaces:
\begin{alignat}{1}
	&\cM^1(x_{02};\bd_0,\gamma_{10},\bd_1,\beta_{12},\bd_2')\times\cM^0(\beta_{12};\bd_1',\gamma_{21},\bd_2)\label{leibniz11}\\
	&\cM^0(x_{02};\bd_0,\gamma_{10},\bd_1,\beta_{12},\bd_2')\times\cM^1(\beta_{12};\bd_1',\gamma_{21},\bd_2)\label{leibniz12}\\
	&\cM^1(\gamma_{20};\bd_0,\gamma_{10},\bd_1,\beta_{12},\bd_2')\times\cM^0(\beta_{12};\bd_1',\gamma_{21},\bd_2)\label{leibniz21}\\
	&\cM^0(\gamma_{20};\bd_0,\gamma_{10},\bd_1,\beta_{12},\bd_2')\times\cM^1(\beta_{12};\bd_1',\gamma_{21},\bd_2)\label{leibniz22}
\end{alignat}
See Figure \ref{fig:sec_prod_leibniz1} for buildings in the boundary of the compactification of \eqref{leibniz11} and \eqref{leibniz12}, and Figure \ref{fig:sec_prod_leibniz2} for those in the boundary of the compactification of \eqref{leibniz21} and \eqref{leibniz22}. In each case (and in all the rest of the proof), we omit to draw the buildings which appear twice, i.e. both in the boundary of \eqref{leibniz11} and \eqref{leibniz12} or both in the boundary of \eqref{leibniz21} and \eqref{leibniz22}, because they cancel each other over $\Z_2$.
\begin{figure}[ht] 
	\labellist
	\footnotesize
	\pinlabel in at 72 45
	\pinlabel in at 110 45
	\pinlabel in at 330 45
	\pinlabel in at 490 45
	\pinlabel in at 13 127
	\pinlabel $x_{02}^{out}$ at 230 130
	\pinlabel $x_{02}^{out}$ at 315 130
	\pinlabel in at 367 130
	\pinlabel in at 580 130
	\pinlabel in at 675 130
	\pinlabel $x_{02}^{out}$ at 60 215
	\pinlabel in at 170 212
	\pinlabel in at 275 212
	\pinlabel $x_{02}^{out}$ at 435 215
	\pinlabel in at 518 212
	\pinlabel $x_{02}^{out}$ at 565 215
	\pinlabel in at 617 212
	\pinlabel $x_{02}^{out}$ at 662 215
	\pinlabel \textbf{A} at 30 170
	\pinlabel \textbf{B} at 135 170
	\pinlabel \textbf{C} at 290 170
	\pinlabel \textbf{D} at 390 170
	\pinlabel \textbf{E} at 500 170
	\pinlabel \textbf{F} at 675 170
	\pinlabel $x_{12}$ at 610 0
	\endlabellist
	\normalsize
	\begin{center}\includegraphics[width=14cm]{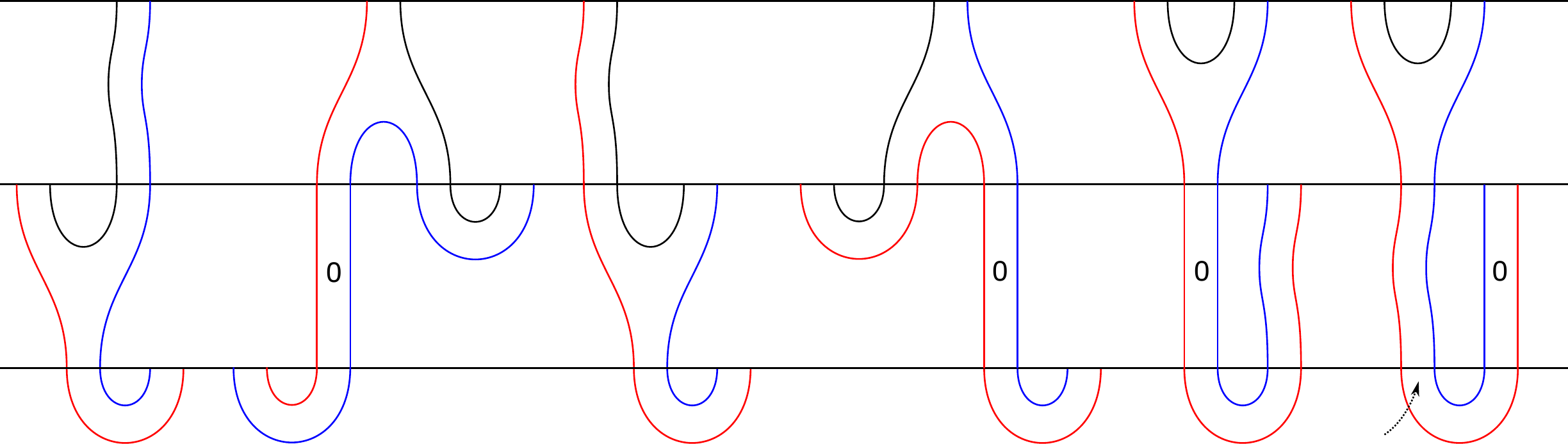}\end{center}
	\caption{Pseudo-holomorphic buildings in the boundary of \eqref{leibniz11} and \eqref{leibniz12}.}
	\label{fig:sec_prod_leibniz1}
\end{figure} 
\begin{figure}[ht]  
\labellist
\footnotesize
\pinlabel in at 38 125
\pinlabel in at 97 210
\pinlabel $\gamma_{20}^{out}$ at 125 20
\pinlabel in at 172 45
\pinlabel in at 235 210
\pinlabel $\gamma_{20}^{out}$ at 263 20
\pinlabel in at 315 130
\pinlabel in at 375 210
\pinlabel $\gamma_{20}^{out}$ at 400 105
\pinlabel \textbf{G} at 60 170
\pinlabel \textbf{H} at 200 170
\pinlabel \textbf{I} at 330 170
\endlabellist
\normalsize
\begin{center}\includegraphics[width=9cm]{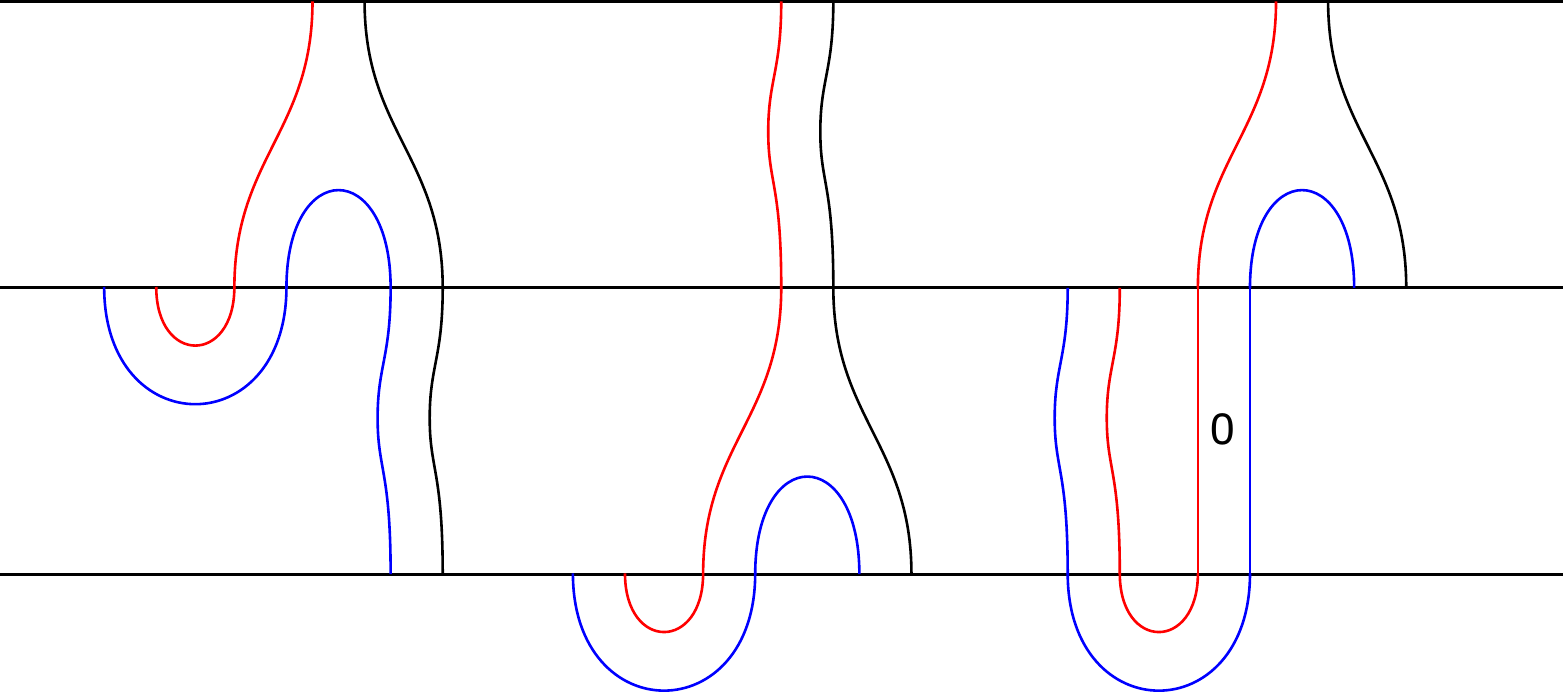}\end{center}
	\caption{Pseudo-holomorphic buildings in the boundary of \eqref{leibniz21} and \eqref{leibniz22}.}
	\label{fig:sec_prod_leibniz2}
\end{figure}

According to Lemma \ref{lem:bananas} (3), the building \textbf{A} either never appears or appears twice, so we can ignore it.

Consider now the building \textbf{B}. Removing the banana containing the output $x_{02}$ puncture gives a building contributing to the component of $\widehat{\mfm}_2(\gamma_{21},\gamma_{10})$ taking values in the generators of $\widehat{C}_+^{cyc}(\La_0,\La_2)$ of the form $\beta_{20}\bd_0\bd_1\bd_2$. Adding the banana with positive asymptotic at $x_{02}$ and using Lemma \ref{lem:bananas} (2), we get that the building contributes $x_{02}\beta\bd_0\bd_1\bd_2+x_{02}\bd_0\bd_1\bd_2\beta$, where $\beta$ is the pure Reeb chord of $\La$ corresponding to $\beta_{01}$. Thus, this type \textbf{B} of buildings, together with \textbf{G}, contribute to $\widehat{\mfm}_1\circ\widehat{\mfm}_2(\gamma_{21},\gamma_{10})$.

The buildings of type \textbf{C} and \textbf{D}, together with \textbf{H} contribute to $\widehat{\mfm}_2(\gamma_{21},\widehat{\mfm}_1(\gamma_{10}))$. Indeed the only slightly subtle thing here is about building \textbf{D}. By assumption, the mixed connecting chord from $\La_2$ to $\La_1$ is not the maximum Morse Reeb chord $x_{12}$. But then, if such a building exists  this connecting chord must be the minimum Morse chord $y_{12}$ for action reasons, because the component of the building with three mixed asymptotics has a positive asymptotic at the Morse chord $x_{02}$. For degree reasons, the connecting mixed chord from $\La_1$ to $\La_0$ must then be $x_{01}$ and the banana having it as a positive asymptotic contributes to the component of $\widehat{\mfm}_1(\gamma_{01})$ taking values in $\langle x_{01}\rangle_{\Ac-\Ac}^{cyc}$ (this component actually vanishes for elements $\gamma_{10}\bs{a}$ where $\bs{a}=1$, but it doesn't in the general case).

Finally, let us consider the building \textbf{F}. Usually, these types of buildings cancel by pairs but because of the assumption about the connecting chord, the buildings of type \textbf{F} arise by degenerating the banana but will never arise by degenerating the top level with three mixed asymptotic. Observe then that the buildings \textbf{E}, \textbf{F} and \textbf{I} contribute to $\widehat{\mfm}_2(\widehat{\mfm}_1(\gamma_{21}),\gamma_{10})$, and we have thus proved the Leibniz rule for the pair of inputs $(\gamma_{21},\gamma_{10})$.

For a pair of inputs $(\gamma_{21},x_{01})$, the Leibniz rule restricts to $\widehat{\mfm}_2(\widehat{\mfm}_1(\gamma_{21}),x_{01})=0$
because $\widehat{\mfm}_1$ vanishes on the maximum Reeb chord. Let us consider the pseudo-holomorphic buildings in the boundary of the compactification of the following products of moduli spaces:
\begin{alignat}{1}
	&\cM^1(x_{02};\bd_0,x_{01},\bd_1,\beta_{12},\bd_2')\times\cM^0(\beta_{12};\bd_1',\gamma_{21},\bd_2)\label{leibniz31}\\
	&\cM^0(x_{02};\bd_0,x_{01},\bd_1,\beta_{12},\bd_2')\times\cM^1(\beta_{12};\bd_1',\gamma_{21},\bd_2)\label{leibniz32}
\end{alignat}
See Figure \ref{fig:sec_prod_leibniz3} for a schematic picture of these buildings. As before, the first building can be ignored. The second building never appears either, for action reasons.
%, indeed for energy reasons the negative asymptotics of the top-level curve must be Morse chords, but then there does not exist a strip with a positive and negative Morse chord asymptotics such that the negative one is the maximum (for action reason). 
The third building finally contributes to $\widehat{\mfm}_2(\widehat{\mfm}_1(\gamma_{21}),x_{01})$.
\begin{figure}[ht]  
\labellist
\footnotesize
	\pinlabel $x_{01}^{in}$ at 40 20
	\pinlabel $x_{02}^{out}$ at 60 210
	\pinlabel in at 120 45
	\pinlabel $x_{02}^{out}$ at 213 210
	\pinlabel $x_{01}^{in}$ at 190 20
	\pinlabel in at 270 125
	\pinlabel $x_{01}^{in}$ at 330 105
	\pinlabel $x_{02}^{out}$ at 355 210
	\pinlabel in at 415 125
\endlabellist
\normalsize
\begin{center}\includegraphics[width=9cm]{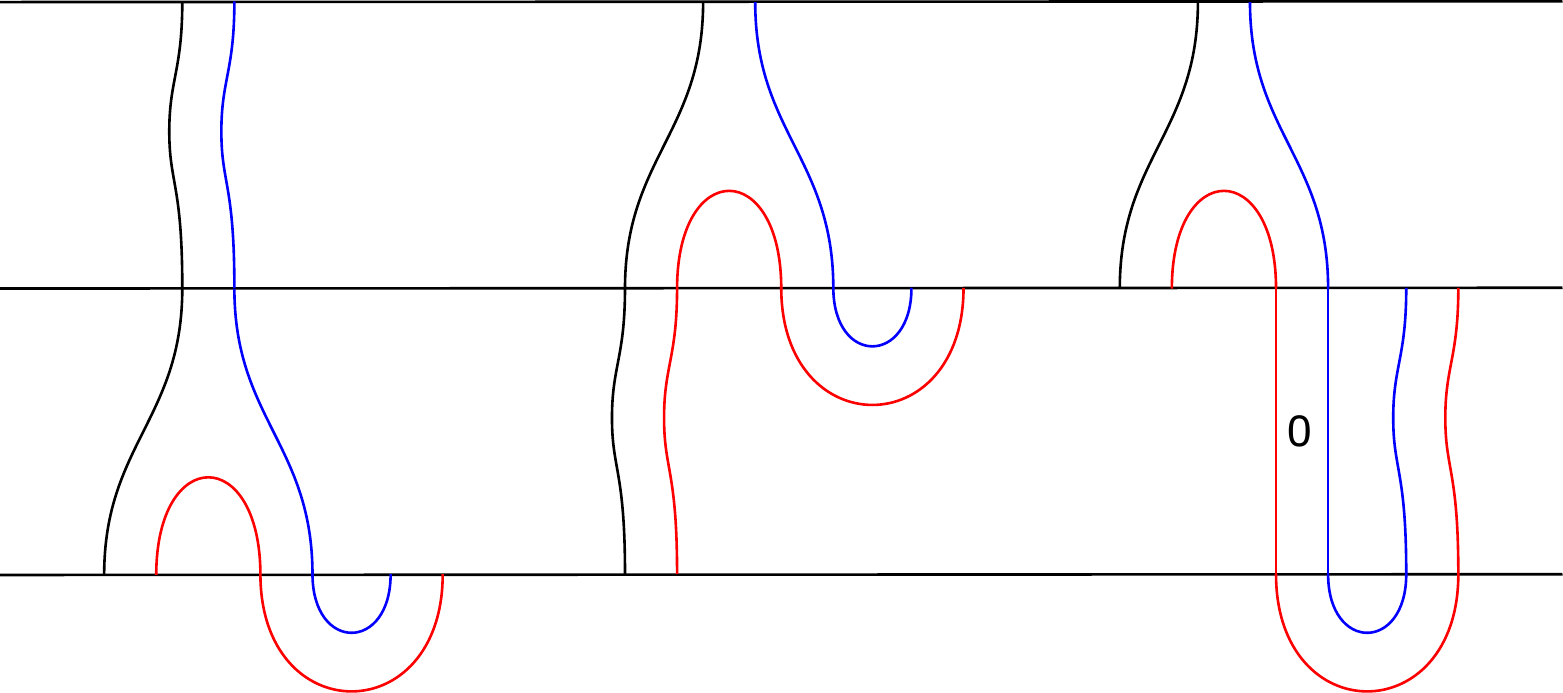}\end{center}
\caption{Pseudo-holomorphic buildings in the boundary of \eqref{leibniz31} and \eqref{leibniz32}.}
\label{fig:sec_prod_leibniz3}
\end{figure}
For the pair of inputs $(x_{12},\gamma_{10})$, the Leibniz rule restricts to $\widehat{\mfm}_1\circ\widehat{\mfm}_2(x_{12},\gamma_{10})+\widehat{\mfm}_2(x_{12},\widehat{\mfm}_1(\gamma_{10}))=0$. We consider the boundary of the compactification of
\begin{alignat}{1}
	&\cM^1(x_{02};\bd_0,\gamma_{10},\bd_1,\beta_{12},\bd_2')\times\cM^0(\beta_{12};\bd_1',x_{12},\bd_2)\label{sec_prod21}\\
	&\cM^0(x_{02};\bd_0,\gamma_{10},\bd_1,\beta_{12},\bd_2')\times\cM^1(\beta_{12};\bd_1',x_{12},\bd_2)\label{sec_prod22}
\end{alignat}
and of 
\begin{alignat}{1}
	&\cM^1(\gamma_{20};\bd_0,\gamma_{10},\bd_1,\beta_{12},\bd_2')\times\cM^0(\beta_{12};\bd_1',x_{12},\bd_2)\label{sec_prod23}\\
	&\cM^0(\gamma_{20};\bd_0,\gamma_{10},\bd_1,\beta_{12},\bd_2')\times\cM^1(\beta_{12};\bd_1',x_{12},\bd_2)\label{sec_prod24}
\end{alignat}
whose different components are schematized in Figure \ref{fig:sec_prod2_leibniz}
\begin{figure}[ht]  
	\labellist
	\footnotesize
		\pinlabel in at 25 257
		\pinlabel $x_{12}^{in}$ at 45 -12
		\pinlabel $x_{02}^{out}$ at 70 177
		\pinlabel in at 98 175
		\pinlabel $x_{12}^{in}$ at 177 70
		\pinlabel $x_{02}^{out}$ at 160 260
		\pinlabel in at 218 175
		\pinlabel $x_{12}^{in}$ at 240 -12
		\pinlabel $x_{02}^{out}$ at 265 260	
		\pinlabel in at 325 257
		\pinlabel $x_{12}^{in}$ at 300 70
		\pinlabel $x_{02}^{out}$ at 385 177
		\pinlabel in at 465 175
		\pinlabel $x_{12}^{in}$ at 440 -12
		\pinlabel $\gamma_{20}^{out}$ at 490 -12
		\pinlabel in at 550 257
		\pinlabel $x_{12}^{in}$ at 530 -12
		\pinlabel $\gamma_{20}^{out}$ at 578 70
		\endlabellist
	\normalsize
	\begin{center}\includegraphics[width=11cm]{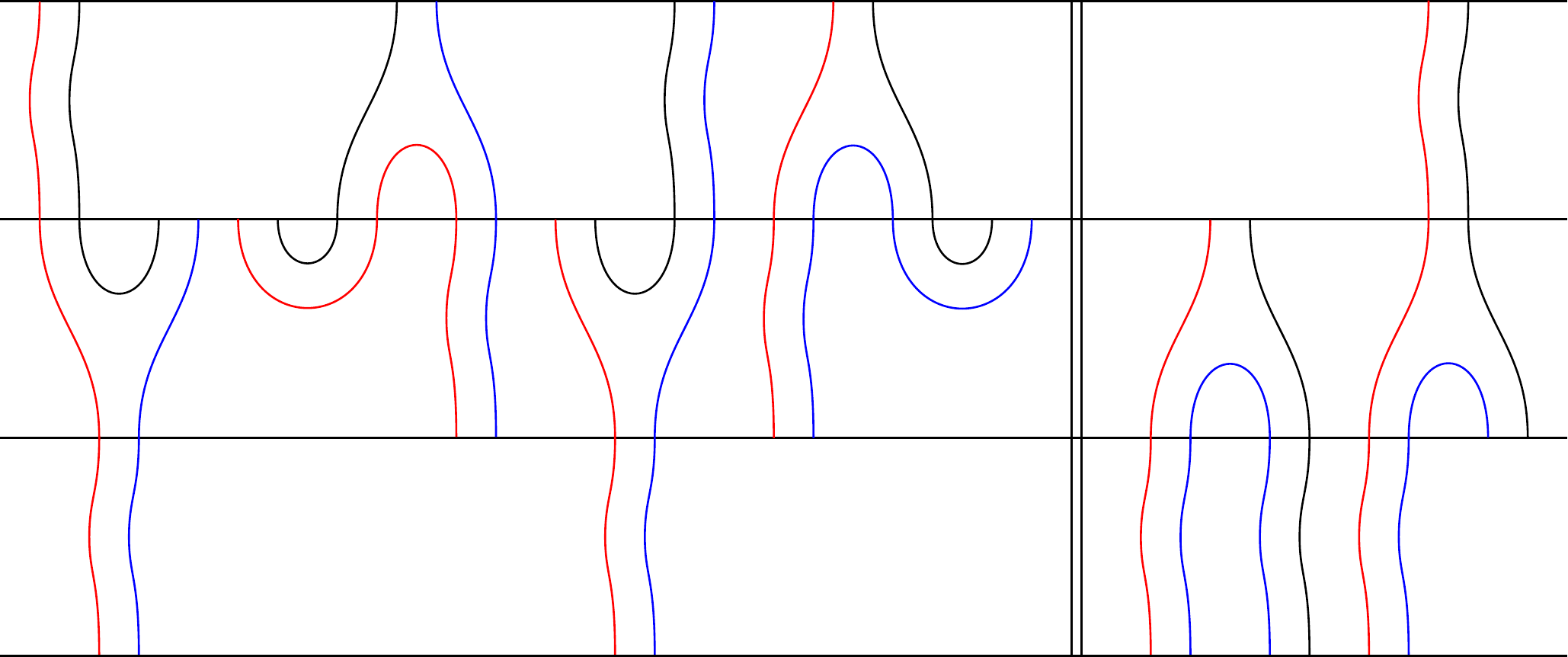}\end{center}
	\caption{Left: pseudo-holomorphic buildings in the boundary of \eqref{sec_prod21} and \eqref{sec_prod22}; right: pseudo-holomorphic buildings in the boundary of \eqref{sec_prod23} and \eqref{sec_prod24}.}
	\label{fig:sec_prod2_leibniz}
\end{figure}
By arguments similar as before, the algebraic contributions of the second and third buildings vanish. The first and sixth contribute to $\widehat{\mfm}_2(x_{12},\widehat{\mfm}_1(\gamma_{10}))$, and the fourth and fifth to $\widehat{\mfm}_1\circ\widehat{\mfm}_2(x_{12},\gamma_{10})$.
Finally, for a pair $(x_{12},x_{01})$ all the terms of the Leibinz rule vanish by definition.
\end{proof}

\begin{rem}
	Observe that the product on $\widehat{C}^{cyc}_+(\La_0,\La_1)$ is defined by a count of $2$-levels pseudo-holomorphic buildings, which arise in the boundary of the compactification of $1$-dimensional moduli spaces. In particular, there is no canonical choice for the buildings we choose to count to define the product, but there is a choice up to homotopy. In particular, we could define a map
	\begin{alignat*}{1}
		\widehat{\bs{d}}_2:\widehat{C}^{cyc}_+(\La_1,\La_2)\otimes\widehat{C}^{cyc}_+(\La_0,\La_1)\to\widehat{C}^{cyc}_+(\La_0,\La_2)
	\end{alignat*}	
by $\widehat{\bs{d}}_2(a_1,a_0)=\widehat{\mfm}_2(a_1,a_0)$ for $(a_1,a_0)$ being of type $(\gamma_{21},\gamma_{10})$, $(\gamma_{21},x_{01})$ or $(x_{12},x_{01})$, and then set
\begin{alignat*}{1}
	\widehat{\bs{d}}_2(x_{12},\gamma_{01})=\sum_{\beta_{01}}\sum\limits_{\substack{\bd_0,\bd_0'\\ \bd_1,\bd_1',\bd_2}}\#\cM^0_{\La_{012}}(x_{02};\bd_0,\beta_{01},\bd_1',x_{12},\bd_2)\#\cM^0_{\La_{01}}(\beta_{01};\bd_0',\gamma_{10},\bd_1)\cdot x_{02}\bd_0\bd_0'\bd_1\bd_1'\bd_2
\end{alignat*}
See Figure \ref{fig:sec_prod3}. The maps $\widehat{\bs{d}}_2$ and $\widehat{\mfm}_2$ are homotopic via a (degree $-1$) homotopy $h:\widehat{C}^{cyc}_+(\La_1,\La_2)\otimes\widehat{C}^{cyc}_+(\La_0,\La_1)\to\widehat{C}^{cyc}_+(\La_0,\La_2)$
defined by
\begin{alignat*}{1}
	h(x_{12}\bs{a}_1,\gamma_{01}\bs{a}_0)&=\sum\limits_{\bd_0,\bd_1,\bd_2}\#\cM^0(x_{02};\bd_0,\gamma_{10},\bd_1,x_{12},\bd_2)\cdot x_{02}\bd_0\bs{a}_0\bd_1\bs{a}_1\bd_2\\
	&+\sum_{\gamma_{20}}\sum\limits_{\bd_0,\bd_1 \bd_2}\#\cM^0(\gamma_{20};\bd_0,\gamma_{10},\bd_1,x_{12},\bd_2)\cdot \gamma_{20}\bd_0\bs{a}_0\bd_1\bs{a}_1\bd_2
\end{alignat*}
and $h$ vanishes for other pairs of generators. Indeed, by studying the boundary of the compactification of $1$-dimensional moduli spaces as the one used to define $h$, one can check that
\begin{alignat*}{1}
	\widehat{\mfm}_2+\widehat{d}_2=h(\id\otimes\widehat{\mfm}_1)+h(\widehat{\mfm}_1\otimes\id)+\widehat{\mfm}_1\circ h
\end{alignat*}
Although the pseudo-holomorphic buildings used to define $\widehat{\bs{d}}_2$ have some ``symmetry'' (i.e. the buildings contributing to $\widehat{\bs{d}}_2(\gamma_{21},x_{01})$ and $\widehat{\bs{d}}_2(x_{12},\gamma_{10})$ are symmetric to each other), we chose other buildings to define the product $\widehat{\mfm}_2$. The main reason for this choice is that it is easier to then find formulas to generalize this product to a family of maps $\{\widehat{\mfm}_d\}$ satisfying the $A_\infty$-equations, see Section \ref{sec:A-infty}.
\begin{figure}[ht]  
	\labellist
	\footnotesize
		\pinlabel in at 45 41
		\pinlabel $x_{12}^{in}$ at 130 23
		\pinlabel $x_{02}^{out}$ at 107 127
	\endlabellist
	\normalsize
	\begin{center}\includegraphics[width=4cm]{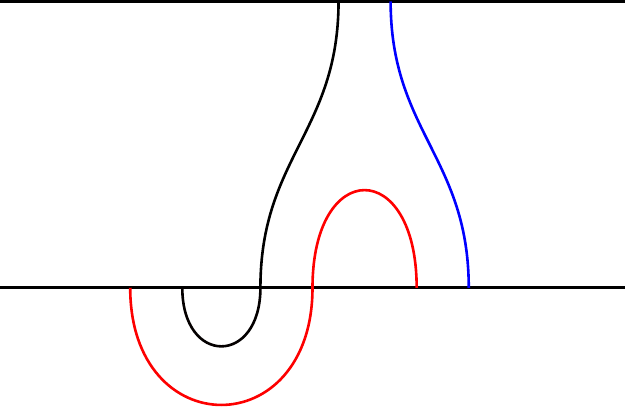}\end{center}
	\caption{Pseudo-holomorphic buildings contributing to $\widehat{\bs{d}}_2(x_{12},\gamma_{01})$.}
	\label{fig:sec_prod3}
\end{figure}
\end{rem}

\begin{rem}
	The pseudo-holomorphic buildings contributing to the component of the product $\widehat{\mfm}_2$ with long chords as inputs and output can also be used to build a product structure on the linearized Legendrian contact \textit{homology} complex; product which will be unital in case the Legendrian submanifold is horizontally displaceable.
	
	It is well known that there is a (non unital) product structure (even an $A_\infty$-structure) on the linearized Legendrian contact cohomology, defined first in \cite{CKESW} and generalized to the bilinearized case in \cite{BCh}. 
	This product can be computed directly from the C-E algebra, and equivalently by a count of pseudo-holomorphic discs with boundary on a $3$-copy $\La_0\cup\La_1\cup\La_2$ of $\La$ negatively asymptotic to a chord from $\La_0$ to $\La_1$ and to a chord from $\La_1$ to $\La_2$ (the inputs), and positively asymptotic to a chord from $\La_0$ to $\La_2$ (the output), and potentially having pure negative asymptotics which are augmented. In this case, the mixed chord considered are never Morse chords.
	
	In \cite{NRSSZ}, for knots in $\R^3$ the authors consider the complex generated by mixed chords from $\La_1$ to $\La_0$ in a $2$-copy of $\La$, which they denote $\Hom_+(\ep_0,\ep_1)$. They define then a product (as well as an $A_\infty$-structure) on this complex by a count of similar curves as above (two negative mixed inputs and one positive mixed output) but the main difference is that in this case the Morse chords can be inputs and outputs. The homology of the complex $\Hom_+(\ep_0,\ep_1)$ is isomorphic to the Legendrian contact homology of $\La$. This is implied by the acyclicity of the complex of the $2$-copy which holds for knots in $\R^3$, see \cite[Proposition 5.4]{NRSSZ}. In particular, the product on $\Hom_+$ doesn't give canonically a product on Legendrian contact homology.
	
	By \cite[Proposition 2.7]{CDGG1}, the (bi)linearization of the DG-bimodule $(C_+(\La_0,\La_1),\bs{\D}_1)$ by augmentations $\ep_0,\ep_1$ is canonically isomorphic to the bilinearized Legendrian contact homology complex $LCC_*^{\ep_0,\ep_1}(\La)$.
	We claim that the only-long-chords-asymptotics component of the product $\widehat{\mfm}_2$, when linearized by augmentations, gives a product on the Legendrian contact homology complex, the proof of this being schematized in Figure \ref{fig:sec_prod_leibniz2}. In the presence of a filling of $\La$ and under the hypothesis of horizontal displaceability, it is possible to prove that this new product on Legendrian contact homology is isomorphic to the product on the $\Hom_+$ complex, because both are isomorphic to the product on the singular cohomology of the filling (through the Ekholm-Seidel isomorphism \cite{E1,DR}). This will be investigated more precisely in a forthcoming paper with Georgios Dimitroglou-Rizell, where we describe a \textit{relative Calabi-Yau structure} carried by the C-E algebra. This relative structure allows in particular to show the isomorphism between the products on $LCH_*$ and $\Hom_+$ without the presence of a filling.
\end{rem}

\subsection{Products under the Calabi-Yau morphism}
%%%%%%%%%%%%%%%%%%%%%%%%%%%%%%%%%%%%%%%%%%%%%%%%%%
Let us consider the Calabi-Yau map $\CY$ induced on the $\Z_2$-modules, we denote it $\CY_1:\widehat{C}_+^{cyc}(\La_0,\La_1)\to\widecheck{C}_-^{cyc}(\La_0,\La_1)$.
\begin{teo}\label{teo:CY2}
	The map $\CY_1$ preserves the product structures in homology, i.e. the relation
	$$\CY_1\circ\,\widehat{\mfm}_2+\widecheck{\mfm}_2\big(\CY_1,\CY_1\big)=0$$ is satisfied in homology.
\end{teo}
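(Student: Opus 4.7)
The plan is to exhibit the identity as the $d = 2$ instance of an $A_\infty$-functor-type relation for a family of higher maps $\{\CY_d\}$ extending $\CY_1$. Specifically, I would construct a degree $-1$ map
\[
\CY_2 \colon \widehat{C}_+^{cyc}(\La_1,\La_2) \otimes \widehat{C}_+^{cyc}(\La_0,\La_1) \to \widecheck{C}_-^{cyc}(\La_0,\La_2)
\]
by counting suitable pseudo-holomorphic buildings with boundary on the cylindrical lift of a $3$-copy $\La_0 \cup \La_1 \cup \La_2$, and prove on chains the identity
\[
\widecheck{\mfm}_1 \CY_2 + \CY_2\bigl(\widehat{\mfm}_1 \otimes \id + \id \otimes \widehat{\mfm}_1\bigr) + \CY_1 \widehat{\mfm}_2 + \widecheck{\mfm}_2(\CY_1, \CY_1) = 0.
\]
Since the first two summands constitute the differential $D(\CY_2)$ in the Hom complex of bimodule maps, passing to homology kills them and yields Theorem \ref{teo:CY2}.

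For inputs $(\gamma_{21}, \gamma_{10})$ given by long chords, $\CY_2(\gamma_{21}, \gamma_{10})$ would count $2$-level buildings whose upper level is a disc on $\R \times \La_{012}$ with a positive output in $\widecheck{C}_-(\La_0,\La_2)$ (a long chord $\xi_{02}$ or the minimum Morse chord $y_{02}$), a positive input at one of $\gamma_{10}, \gamma_{21}$, a negative mixed asymptotic at a connecting chord (in $\Rc(\La_0,\La_1)$ or $\Rc(\La_1,\La_2)$ respectively), together with pure negative asymptotics; the lower level is a banana on the complementary $2$-copy with positive asymptotics at the connecting chord and the remaining input. The Morse-input cases $(x_{12}, \gamma_{10})$, $(\gamma_{21}, x_{01})$, $(x_{12}, x_{01})$ are handled by analogous counts, with several components vanishing for action or dimension reasons via Lemma \ref{lem:bananas}. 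As in the discussion contrasting $\widehat{\mfm}_2$ with $\widehat{\bs{d}}_2$, there is some flexibility in which $2$-level configurations to count; any such choice should yield a homotopic map, and the specific choice is dictated by making the boundary enumeration below as clean as possible.

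The chain-level identity is obtained via SFT compactness applied to the $1$-dimensional moduli spaces $\cM^1_{\La_{012}}(\xi_{02}; \bd_0, \gamma_{10}, \bd_1, \gamma_{21}, \bd_2)$ with $\xi_{02} \in \widecheck{C}_-$. Their codimension-$1$ boundary consists of broken $2$-level buildings, which I would enumerate and group by the type of break chord: (i) pure Reeb chord breakings, which recombine into $D(\CY_2)$ together with the action of $\widehat{\mfm}_1, \widecheck{\mfm}_1$ on pure coefficient words; (ii) breakings at mixed chords in $\Rc^l(\La_0,\La_1)\cup\{y_{01}\}$ or $\Rc^l(\La_1,\La_2)\cup\{y_{12}\}$, producing a top $\widecheck{\mfm}_2$-disc glued to two $\CY_1$-bananas, yielding $\widecheck{\mfm}_2(\CY_1, \CY_1)$; (iii) breakings producing a top $\CY_1$-banana paired with a bottom $\widehat{\mfm}_2$-configuration, yielding $\CY_1 \widehat{\mfm}_2$; (iv) buildings involving a maximum Morse chord $x_{ij}$ which cancel in pairs by Lemma \ref{lem:bananas} (3). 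The main obstacle is this combinatorial bookkeeping: there are several directional sub-cases of mixed-chord breakings, and one must verify that every boundary contribution falls into one of these four groups with no leftover, following the pattern of (but one layer deeper than) the Leibniz rule already proved for $\widehat{\mfm}_2$.
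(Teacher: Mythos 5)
Your overall strategy is the same as the paper's: construct a degree $-1$ map $\CY_2$ by counting $2$-level buildings on a $3$-copy, prove the chain-level $A_\infty$-functor equation
$\widecheck{\mfm}_1\circ\CY_2+\widecheck{\mfm}_2(\CY_1\otimes\CY_1)+\CY_1\circ\widehat{\mfm}_2+\CY_2(\id\otimes\widehat{\mfm}_1)+\CY_2(\widehat{\mfm}_1\otimes\id)=0$, and pass to homology. However, the mechanism you propose for establishing that identity does not work as stated. You invoke SFT compactness for the \emph{single} $1$-dimensional moduli spaces $\cM^1_{\La_{012}}(\xi_{02};\bd_0,\gamma_{10},\bd_1,\gamma_{21},\bd_2)$; but the codimension-$1$ boundary of such a space consists only of $2$-level buildings, whereas several terms of the identity are counts of $3$-level configurations. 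Indeed $\CY_2$ and $\widehat{\mfm}_2$ are themselves defined by rigid $2$-level buildings, so $\widecheck{\mfm}_1\circ\CY_2$ and $\CY_1\circ\widehat{\mfm}_2$ (your groups (i) and (iii)) are $3$-level buildings and cannot arise as boundary points of a single $1$-dimensional moduli space. What the boundary of $\cM^1_{\La_{012}}(\xi_{02};\cdots)$ actually yields is (a restriction of) the $k=2$ case of Lemma \ref{lem:baD}, namely $\ba_1(\ba_2+\bs{\D}_2)+\ba_2\big(\id\otimes(\ba_1+\bs{\D}_1)\big)+\ba_2\big((\ba_1+\bs{\D}_1)\otimes\id\big)=0$, in which the two handedness choices of the $\CY_2$-defining configurations appear as boundary terms rather than the composites you need.

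The fix is the one the paper uses: take the boundary of the compactification of the $1$-dimensional \emph{products} of moduli spaces underlying the definition of $\CY_2$ (one factor of dimension $1$, the other rigid, for each of the four input types). Degenerating either factor produces $3$-level buildings, and these regroup into the five terms of the functor equation, with the $x_{ij}$-connected configurations cancelling by Lemma \ref{lem:bananas}~(3) as you anticipated; one must also impose, as part of the definition of $\CY_2$, that the connecting chord from $\La_2$ to $\La_1$ is never the maximum Morse chord $x_{12}$, since otherwise certain boundary strata arise from only one of the two product factors and would not cancel. Alternatively, the identity can be derived algebraically from Lemma \ref{lem:baD} together with the closed formulas $\widehat{\mfm}_2=(\ba_2^x+\bs{\D}_2)(\CY_1\otimes\id)$ and $\CY_2=\ba_2^\vee(\CY_1\otimes\id)$, which is how the general case is handled in Theorem \ref{teo:inf}. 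Your enumeration (i)--(iv) of the contributions is essentially the correct bookkeeping for the corrected boundary analysis; only the moduli spaces being compactified need to change.
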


\begin{proof}
In this proof we will use the notations $\widehat{\mfm}_2^+$ and $\widehat{\mfm}_2^x$ to denote the components of $\widehat{\mfm}_2$ with values in $C_+^{cyc}(\La_0,\La_1)$ and $\langle x_{01}\rangle_{\Ac-\Ac}^{cyc}$ respectively.

Given a $3$-copy $\La_0\cup\La_1\cup\La_2$ of $\La$, we define a (degree $-1$) map
\begin{alignat*}{1}
	\CY_2:\widehat{C}_+^{cyc}(\La_1,\La_2)\otimes\widehat{C}_+^{cyc}(\La_0,\La_1)\to\widecheck{C}_-^{cyc}(\La_0,\La_2)
\end{alignat*}
by a count of pseudo-holomorphic buildings as shown in Figure \ref{fig:CY2}. Similarly as for the product $\widehat{\mfm}_2$ we require that the connecting chord from $\La_2$ to $\La_1$ in the buildings is not the maximum Morse Reeb chord $x_{12}$. Observe however that this is automatically satisfied for action reasons for the buildings of types \textbf{C} and \textbf{D}.

\begin{figure}[ht]  
	\labellist
	\footnotesize
		\pinlabel $\gamma_{02}^\text{out}$ at 85 180
	\pinlabel in at 40 175
	\pinlabel in at 100 92
	\pinlabel \textbf{A} at 75 30
		\pinlabel $\gamma_{02}^\text{out}$ at 175 180
	\pinlabel $x_{01}^\text{in}$ at 150 70
	\pinlabel in at 235 92
	\pinlabel \textbf{B} at 175 30
		\pinlabel $\gamma_{02}^\text{out}$ at 335 180
	\pinlabel in at 285 175
	\pinlabel $x_{12}^\text{in}$ at 310 -12
	\pinlabel \textbf{C} at 285 30
		\pinlabel $\gamma_{02}^\text{out}$ at 425 180
	\pinlabel $x_{12}^\text{in}$ at 445 -12
	\pinlabel $x_{01}^\text{in}$ at 400 69
	\pinlabel \textbf{D} at 420 30
	\pinlabel \textit{not} at 127 30
	\pinlabel $x_{12}$ at 127 20
	\endlabellist
	\normalsize
	\begin{center}\includegraphics[width=11cm]{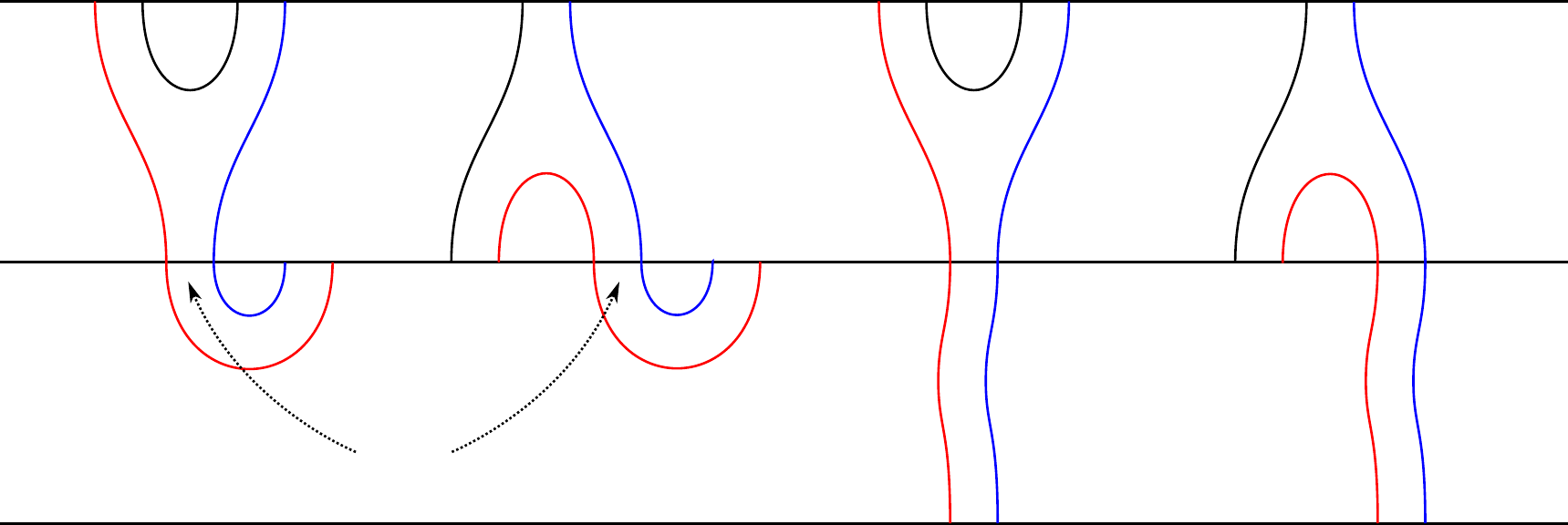}\end{center}
	\caption{Pseudo-holomorphic buildings contributing to the map $\CY_2$.}
	\label{fig:CY2}
\end{figure}
By considering buildings in the boundary of the compactification of one-dimensional products of moduli spaces of types \textbf{A}, \textbf{B}, \textbf{C} and \textbf{D} in Figure \ref{fig:CY2}, one proves that the following relation is satisfied:
\begin{alignat}{1}
	\widecheck{\mfm}_1\circ\CY_2&+\widecheck{\mfm}_2\big(\CY_1\otimes\CY_1\big)+\CY_1\circ\widehat{\mfm}_2+\CY_2(\id\otimes\,\widehat{\mfm}_1)+\CY_2(\widehat{\mfm}_1\otimes\id)=0\label{rel:functor2}
\end{alignat}
which shows that $\CY_1$ preserves the products in homology.
In Figure \ref{fig:CY2dege1} we depicted the different types of buildings in the boundary of the compactification of $1$-dimensional products of moduli spaces of type \textbf{A}. The buildings of type \textbf{A1} together with those of type \textbf{A3} where the connecting chord from $\La_1$ to $\La_0$ is $x_{01}$, contribute to $\CY_2(\gamma_{21},\widehat{\mfm}_1(\gamma_{10}))$. Those of type \textbf{A2} contribute to $\CY_1\circ\widehat{\mfm}_2^x(\gamma_{21},\gamma_{10})$ when the connecting chord from $\La_2$ to $\La_0$ is $x_{02}$, and to $\widecheck{\mfm}_1\circ\CY_2(\gamma_{21},\gamma_{10})$ otherwise. The buildings of type \textbf{A3} when the connecting chord from $\La_1$ to $\La_0$ is not $x_{01}$ contribute to $\widecheck{\mfm}_2(\CY_1(\gamma_{21}),\CY_1(\gamma_{10}))$. Those of type \textbf{A4} contribute to $\CY_1\circ\widehat{\mfm}_2^+(\gamma_{21},\gamma_{10})$ and finally those of types \textbf{A5} nad \textbf{A6} contribute to $\CY_2(\widehat{\mfm}_1(\gamma_{21}),\gamma_{10})$. This gives the relation \eqref{rel:functor2} for the pair of inputs $(\gamma_{21},\gamma_{10})$. 
\begin{figure}[ht]  
	\labellist
	\footnotesize
	\pinlabel \textbf{A1} at 50 150
	\pinlabel \textbf{A2} at 120 150
	\pinlabel \textbf{A3} at 260 60
	\pinlabel \textbf{A4} at 410 60
	\pinlabel \textbf{A5} at 560 145
	\pinlabel \textbf{A6} at 660 145
	\pinlabel $x_{12}$ at 593 -5
	\endlabellist
	\normalsize
	\begin{center}\includegraphics[width=12cm]{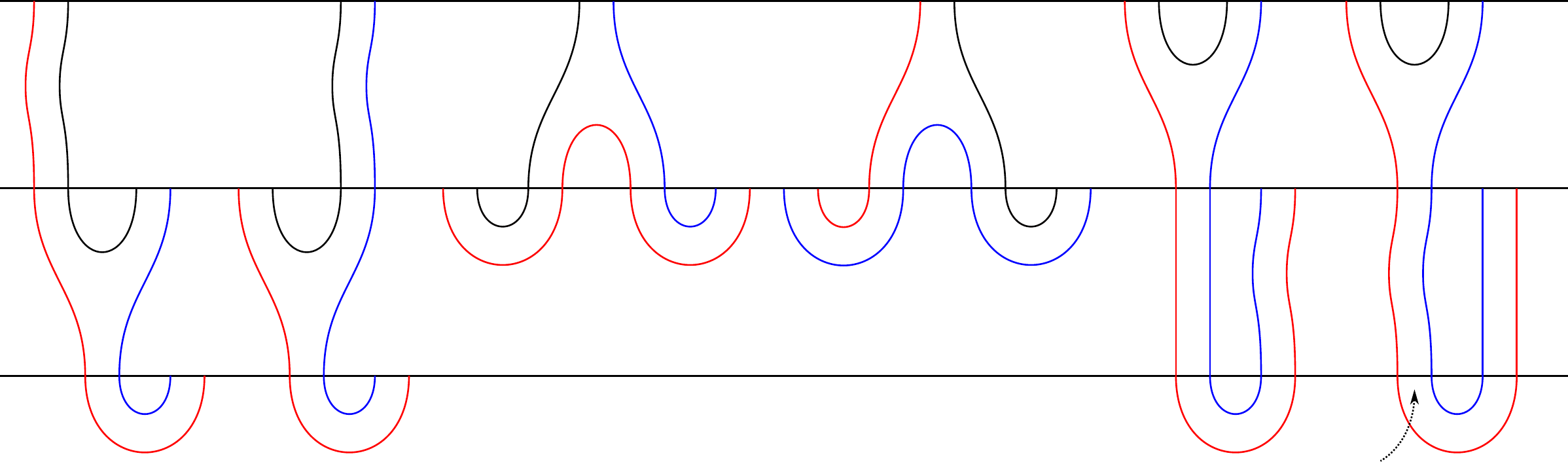}\end{center}
	\caption{Pseudo-holomorphic buildings in the boundary of $1$-dimensional products of moduli spaces of type \textbf{A}.}
	\label{fig:CY2dege1}
\end{figure}
In Figure \ref{fig:CY2dege2} we consider the broken discs in the boundary of $1$-dimensional products of moduli spaces of type \textbf{B}. The buildings \textbf{B1} contribute to $\CY_1\circ\widehat{\mfm}_2^x(\gamma_{21},x_{01})$ when the connecting chord from $\La_2$ to $\La_0$ is the maximum Morse chord $x_{02}$, and to $\widecheck{\mfm}_1\circ\CY_2(\gamma_{21},x_{01})$ otherwise. Those of type \textbf{B2} contribute to $\widecheck{\mfm}_2(\CY_1(\gamma_{21}),\CY_1(x_{01}))$ (none of the connecting chord is a maximum).The buildings of types \textbf{B3} and \textbf{B4} finally contribute to $\CY_2(\widehat{\mfm}_1(\gamma_{21}),x_{01})$. The sum of these contributions gives the relation \ref{rel:functor2} for the pair of inputs $(\gamma_{21},x_{01})$ (observe that some terms in the relation vanish by definition). Similarly this relation can be checked for pairs of inputs $(x_{12},\gamma_{10})$ and $(x_{12},x_{01})$ by considering broken discs in the boundary of $1$-dimensional products of moduli spaces of type \textbf{C} and \textbf{D} respectively, see Figure \ref{fig:CY2dege34}.

\begin{figure}[ht]  
	\labellist
	\footnotesize
	\pinlabel \textbf{B1} at 10 80
	\pinlabel \textbf{B2} at 180 80
	\pinlabel \textbf{B3} at 290 80
	\pinlabel \textbf{B4} at 410 80
	\pinlabel $x_{12}$ at 407 2
	\endlabellist
	\normalsize
	\begin{center}\includegraphics[width=9cm]{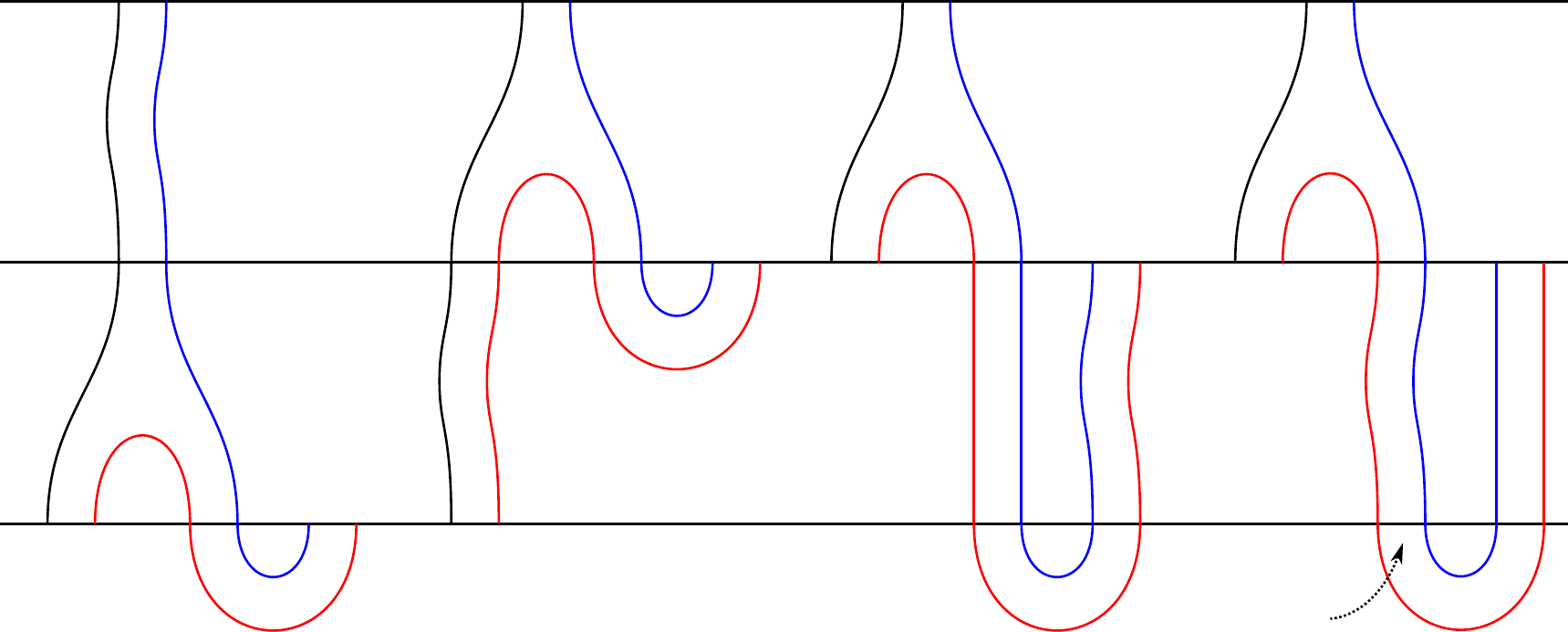}\end{center}
	\caption{Pseudo-holomorphic buildings in the boundary of $1$-dimensional products of moduli spaces of type \textbf{B}.}
	\label{fig:CY2dege2}
\end{figure}
\begin{figure}[ht]  
	\labellist
	\footnotesize
	\pinlabel \textbf{C1} at 30 200
	\pinlabel \textbf{C2} at 140 200
	\pinlabel \textbf{C3} at 270 200
	\pinlabel \textbf{C4} at 380 200
	\pinlabel \textbf{D1} at 470 200
	\pinlabel \textbf{D2} at 590 200
	\endlabellist
	\normalsize
	\begin{center}\includegraphics[width=13cm]{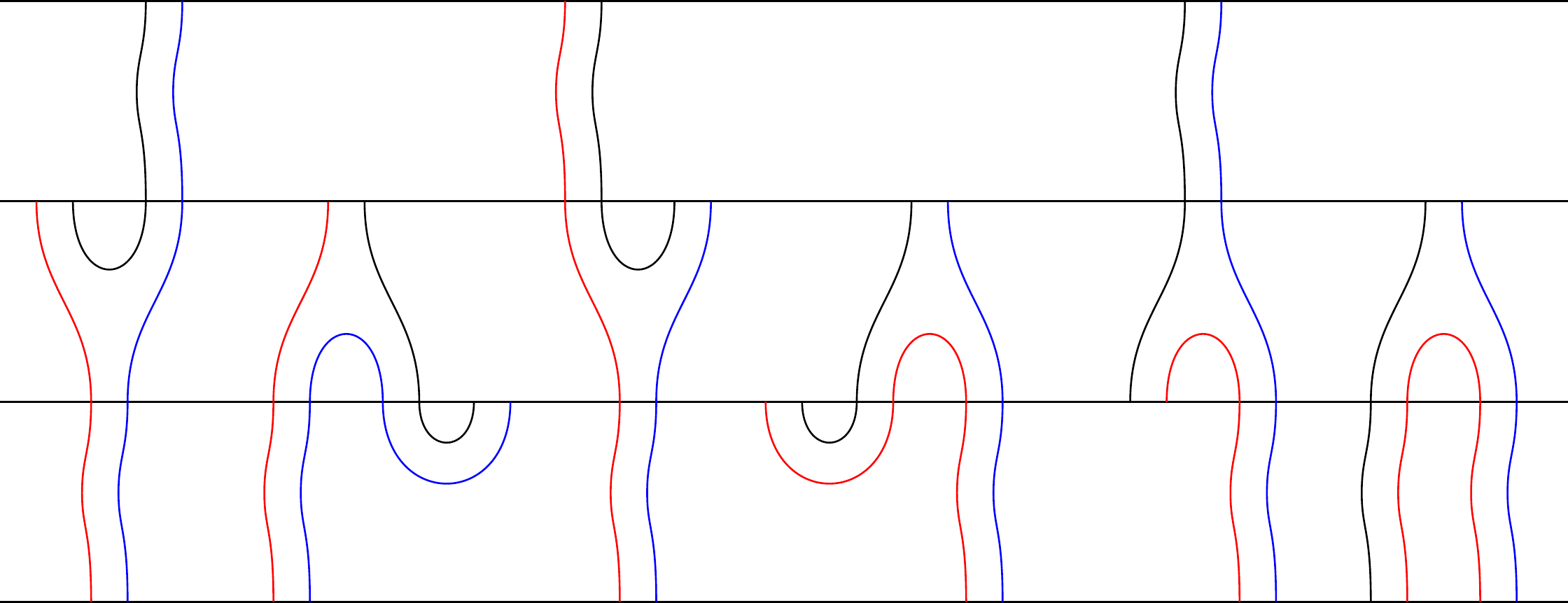}\end{center}
	\caption{Pseudo-holomorphic buildings in the boundary of $1$-dimensional products of moduli spaces of type \textbf{C} and \textbf{D}.}
	\label{fig:CY2dege34}
\end{figure}
\end{proof}

From Theorem \ref{teo:CY2}, we deduce that the product $\widehat{\mfm}_2$ has a unit in homology represented by any cycle in $\widehat{C}_+^{cyc}(\La_0,\La_1)$ which is sent to the minimum Morse Reeb chord $y_{01}$ by the map $\CY_1$.

\subsection{$A_\infty$-structure}\label{sec:A-infty}
%%%%%%%%%%%%%%%%%%%%%%%%%%%%%%%%%%%%%%%%%%%%%%%%%%%%%%%

We can go further and define for each $d\geq3$ maps $\widecheck{\mfm}_d$ and $\widehat{\mfm}_d$ of degree $2-d$ by counting pseudo-holomorphic discs with boundary on a $(d+1)$-copy of $\La$ (defined in an analogous way as the $2$- and $3$-copies). 
Given a $(d+1)$-copy $\La_0\cup\dots\cup\La_d$ of $\La$, the maps
\begin{alignat*}{1}
	\widecheck{\mfm}_d:\widecheck{C}^{cyc}_-(\La_{d-1},\La_d)\otimes\dots\otimes\widecheck{C}^{cyc}_-(\La_0,\La_1)\to\widecheck{C}^{cyc}_-(\La_0,\La_d)
\end{alignat*}
are given by a count of pseudo-holomorphic discs having mixed negative asymptotics corresponding to inputs and one positive asymptotic which is the output. The map $\widecheck{\mfm}_d$ has degree $2-d$. The fact that these maps satisfy the $A_\infty$-equations goes back to \cite{CKESW,BCh}. In our case we have to take extra care of the coefficients in the C-E algebra but it works exactly the same as in the case $d=2$ we treated in Section \ref{sec:product}.

Now let's define the maps $\widehat{\mfm}_d$.
First, we extend the maps $\ba_1$ and $\bs{\D}_1$ to higher order maps
\begin{alignat*}{1}
	\ba_d,\bs{\D}_d:\RFC^{cyc}(\La_{d-1},\La_d)\otimes\dots\otimes\RFC^{cyc}(\La_0,\La_1)\to\RFC^{cyc}(\La_0,\La_d)
\end{alignat*}
for $d\geq1$ as follows. These maps have degree $2-d$ and were considered in \cite[Section 8]{L2}, but we recall here the definitions. For a $d$-tuple of elements $(c_{d-1}\bs{a}_{d-1},\dots,c_0\bs{a}_0)\in\RFC^{cyc}(\La_{d-1},\La_d)\otimes\dots\otimes\RFC^{cyc}(\La_0,\La_1)$ where $c_j$ are mixed chords and $\bs{a}_j$ words of pure Reeb chords, set
\begin{alignat*}{1}
	&\ba_d(c_{d-1}\bs{a}_{d-1},\dots,c_0\bs{a}_0)=\sum\limits_{\substack{\gamma_{0d}\\\bd_0,\dots,\bd_d}}\#\cM^0_{\La_{0\dots d}}(\gamma_{0d};\bd_0,c_0,\bd_2,\dots,\bd_{d-1},c_{d-1},\bd_d)\cdot\gamma_{0d}\bd_0\bs{a}_0\bd_1\dots\bs{a}_{d-1}\bd_d\\
	&\bs{\D}_d(c_{d-1}\bs{a}_{d-1},\dots,c_0\bs{a}_0)=\sum\limits_{\substack{\gamma_{d0}\\\bd_0,\dots,\bd_d}}\#\cM^0_{\La_{0\dots d}}(\gamma_{d0};\bd_0,c_0,\bd_2,\dots,\bd_{d-1},c_{d-1},\bd_d)\cdot\gamma_{d0}\bd_0\bs{a}_0\bd_1\dots\bs{a}_{d-1}\bd_d
\end{alignat*}
Observe that for the map $\ba_d$, the mixed chord in the output is a positive asymptotic of the pseudo-holomorphic discs considered to define it, while for $\bs{\D}_d$ it is a negative asymptotic. Thus, for energy reasons $\bs{\D}_d(c_{d-1}\bs{a}_{d-1},\dots,c_0\bs{a}_0)$ is automatically $0$ if for all $0\leq j\leq d-1$ we have $c_j\in C(\La_{j},\La_{j+1})$.
\begin{nota}
	We will denote $\ba_d^x$ the component of $\ba_d$ which takes values in $\langle x_{0d}\rangle_{\Ac-\Ac}^{cyc}$, and $\ba_d^\vee$ for the component which takes values in $\widecheck{C}_-^{cyc}(\La_0,\La_d)$.
\end{nota}
We make the following observations:
\begin{itemize}
	\item as ungraded maps, the map $\CY_1$ is equal to the restriction to $\widehat{C}_+^{cyc}$ of the map $\ba_1^\vee$, namely it is defined by a count of bananas with two positive asymptotics.
	\item the maps $\widecheck{\mfm}_d$ are equal to the restriction of $\ba_d^\vee$ to $\widecheck{C}_-^{cyc}(\La_{d-1},\La_d)\otimes\dots\otimes\widecheck{C}_-^{cyc}(\La_0,\La_1)$.
\end{itemize}
With these notations
%$\CY_1=\ba_1^\vee$ when restricting $\ba_1^\vee$ to the submodule $\widehat{C}_+^{cyc}$.
we can also rewrite the differential $\widehat{\mfm}_1$ and the product $\widehat{\mfm}_2$ as follows (we now drop again the full notation for elements, removing the words of pure Reeb chords):
\begin{alignat*}{1}
	&\widehat{\mfm}_1(\gamma_{10})=\bs{\D}_1(\gamma_{10})+\ba_1^x(\gamma_{10})\\
	&\widehat{\mfm}_2(\gamma_{21},\gamma_{10})=\ba_2^x\big(\CY_1(\gamma_{21}),\gamma_{10}\big)+\bs{\D}_2\big(\CY_1(\gamma_{21}),\gamma_{10}\big)\\
	&\widehat{\mfm}_2(\gamma_{21},x_{01})=\ba_2^x\big(\CY_1(\gamma_{21}),x_{01}\big)\\
	&\widehat{\mfm}_2(x_{12},\gamma_{10})=\ba_2^x\big(\CY_1(x_{12}),\gamma_{10}\big)+\bs{\D}_2\big(\CY_1(x_{12}),\gamma_{10}\big)
\end{alignat*}
Note that $\bs{\D}_2\big(\CY_1(\gamma_{21}),x_{01}\big)=0$ for action reasons, as well as $\ba_2^x\big(x_{12},\CY_1(x_{01})\big)=\bs{\D}_2\big(\CY_1(x_{12}),x_{01}\big)=0$. So we can write a compact formula for the product
\begin{alignat*}{1}
	\widehat{\mfm}_2=\big(\ba_2^x+\bs{\D}_2\big)\big(\CY_1\otimes\id\big)
\end{alignat*}
Finally, observe that the map $\CY_2$ can be rewritten $\CY_2=\ba_2^\vee\big(\CY_1\otimes\id\big)$.
\begin{rem}
	Very rigorously, the map $\widehat{\mfm}_1$ has domain and target the $\Z_2$-module $\widehat{C}_+^{cyc}(\La_0,\La_1)$, so it is actually equal to the sum of the \textit{shifted by 1} restrictions of the maps $\bs{\D}_1$ and $\ba_1^x$ (these restrictions have domain $\widehat{C}_+^{cyc}(\La_0,\La_1)[-1]$). So for the formulas for $\widehat{\mfm}_1, \widehat{\mfm}_2$, $\CY_1,\CY_2$ as well as for the maps we define below, the reader should consider these equalities as equalities of ungraded maps.
\end{rem}
We extend now these formulas, i.e. we define maps
\begin{alignat*}{1}
	&\widehat{\mfm}_d:\widehat{C}_+^{cyc}(\La_{d-1},\La_d)\otimes\dots\otimes\widehat{C}_+^{cyc}(\La_0,\La_1)\to\widehat{C}_+^{cyc}(\La_0,\La_d)\\
	&\CY_d:\widehat{C}_+^{cyc}(\La_{d-1},\La_d)\otimes\dots\otimes\widehat{C}_+^{cyc}(\La_0,\La_1)\to\widecheck{C}_-^{cyc}(\La_0,\La_d)
\end{alignat*}
for a $(d+1)$-copy of $\La$, by
\begin{alignat}{1}
	&\widehat{\mfm}_d=\sum\limits_{j=2}^d\sum_{\substack{1\leq i_2,\dots,i_{j}\leq d-1\\i_2+\dots+i_{j}=d-1}}\big(\ba_j^x+\bs{\D}_j\big)\Big(\CY_{i_{j}}\otimes\dots\otimes\CY_{i_2}\otimes\id\Big)\label{def:mfm}\\
	&\CY_d=\sum\limits_{j=2}^d\sum_{\substack{1\leq i_2,\dots,i_{j}\leq d-1\\i_2+\dots+i_{j}=d-1}}\ba_j^\vee\big(\CY_{i_{j}}\otimes\dots\otimes\CY_{i_2}\otimes\id\big)\label{def:CY}
\end{alignat}
Observe that $\widehat{\mfm}_d$ is of degree $2-d$ while $\CY_d$ is of degree $1-d$.
\begin{teo}\label{teo:inf}
	Let $\La_0\cup\dots\cup\La_d$ be a $(d+1)$-copy of $\La_0$. Then for any $1\leq k\leq d$ and any $(k+1)$-tuple of integers $0\leq s_0<\dots<s_k\leq d$ we have:
	\small
	\begin{alignat}{1}
		&\sum_{m=1}^k\sum_{n=0}^{k-m}\widehat{\mfm}_{k-m+1}\big(\id^{\otimes k-m-n}\otimes\widehat{\mfm}_m\otimes\id^{\otimes n}\big)=0\label{prod_inf}\\
		&\sum_{r=1}^k\sum_{\substack{t_1,\dots,t_r\\t_1+\dots+t_r=k}}\widecheck{\mfm}_r\big(\CY_{t_r}\otimes\dots\otimes\CY_{t_1}\big)+\sum_{m=1}^k\sum_{n=0}^{k-m}\CY_{k-m+1}\big(\id^{\otimes k-m-n}\otimes\widehat{\mfm}_m\otimes\id^{\otimes n}\big)=0\label{fun_inf}
	\end{alignat}
\normalsize
where
\begin{itemize}
	\item $\widehat{\mfm}_m$ has domain $\widehat{C}_+^{cyc}(\La_{s_{n+m-1}},\La_{s_{n+m}})\otimes\dots\otimes\widehat{C}_+^{cyc}(\La_{s_n},\La_{s_{n+1}})$,
	\item $\widehat{\mfm}_{k-m+1}$ and $\CY_{k-m+1}$ have domain
	$$\widehat{C}_+^{cyc}(\La_{s_{k-1}},\La_{s_k})\otimes\dots\widehat{C}_+^{cyc}(\La_{s_{n+m}},\La_{s_{n+m+1}})\otimes\widehat{C}_+^{cyc}(\La_{s_{n-1}},\La_{s_n})\otimes\dots\otimes\widehat{C}_+^{cyc}(\La_{s_1},\La_{s_0}),$$
	\item if we denote $\tau_j=\sum_{i=1}^jt_i$, then $\CY_{t_j}$ has domain
	$$\widehat{C}_+^{cyc}(\La_{s_{\tau_j-1}},\La_{s_{\tau_j}})\otimes\dots\otimes\widehat{C}_+^{cyc}(\La_{s_{\tau_{j-1}}},\La_{s_{\tau_{j-1}+1}}),$$
	and $\widecheck{\mfm}_r$ has domain 
	$$\widecheck{C}_-^{cyc}(\La_{s_{\tau_{r-1}}},\La_{s_{\tau_r}})\otimes\dots\otimes\widecheck{C}_-^{cyc}(\La_{s_{\tau_1}},\La_{s_{\tau_2}})\otimes\widecheck{C}_-^{cyc}(\La_{s_{\tau_1}},\La_{s_{0}}),$$
\end{itemize}
\end{teo}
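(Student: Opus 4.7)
The proof will be an induction on $k$, piggy-backing on master equations satisfied by the extended family $\{\ba_d, \bs{\D}_d\}$. The base cases $k=1,2$ are already in hand: Lemma \ref{lem:CY1}, Theorem \ref{teo:CY2} and the Leibniz identities for $\widehat{\mfm}_2,\widecheck{\mfm}_2$ cover them. I now describe the inductive step.

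\emph{Step 1: Master equations for $\ba_j^\bullet, \bs{\D}_j$.} For each $j\geq 1$, each $(j+1)$-copy of $\La$, and each tuple of admissible asymptotics, I consider the one-dimensional component of the moduli space of pseudoholomorphic discs with the single mixed output distinguished. By SFT compactness and gluing, its Gromov compactification is a $1$-manifold whose boundary consists of two-level buildings joined at either (i) a mixed Reeb chord, in which case the top disc contributes to an $\ba_r^\bullet$ or $\bs{\D}_r$ and the bottom disc contributes to $\ba_s$ or $\bs{\D}_s$ according to the direction of the connecting chord, or (ii) a pure Reeb chord, giving a contribution that acts by $\partial_\Ac$ on one of the coefficient words $\bs{a}_i$. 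Counting these boundary points over $\Z_2$ yields, separately for each of $\ba_j^\vee$, $\ba_j^x$, and $\bs{\D}_j$, a schematic identity
\begin{align*}
\sum_{r+s=j+1,\ \ell}\bigl(\ba_r^\bullet\ \text{or}\ \bs{\D}_r\bigr)\circ_\ell \bigl(\ba_s\ \text{or}\ \bs{\D}_s\bigr) + (\partial_\Ac\text{-terms})=0.
\end{align*}
These are the analogues for the $\RFC$-setting of the usual SFT master equations, generalising those proved in Section \ref{sec:product} for $j\leq 2$.

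\emph{Step 2: Expansion via the recursive definitions.} I substitute \eqref{def:mfm} and \eqref{def:CY} into the LHS of \eqref{prod_inf} and \eqref{fun_inf}. After expansion every summand is a composition of $\ba_j^\bullet$'s, $\bs{\D}_j$'s, and $\CY_i$'s with $i<k$ along with identities. The inductive hypothesis allows two rewritings: (a) interior occurrences of $\widehat{\mfm}_m$ with $m<k$ nested inside an outer $\CY_{k-m+1}$ or $\widehat{\mfm}_{k-m+1}$ can be expanded back through \eqref{def:mfm}; (b) compositions of the form $\widecheck{\mfm}_r(\CY_{t_r}\otimes\dots\otimes\CY_{t_1})$ with $t_1+\dots+t_r<k$ can be swapped for $\CY_{*}\circ\widehat{\mfm}_{*}$ terms via the inductive form of \eqref{fun_inf}. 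After these manipulations each term in the LHS of \eqref{prod_inf} (respectively \eqref{fun_inf}) takes the form $\ba_r^\bullet\circ_\ell M_s\bigl(\CY_{i_j}\otimes\dots\otimes\CY_{i_2}\otimes\id\bigr)$ with $M_s\in\{\ba_s,\bs{\D}_s\}$, i.e.\ exactly the shape of a summand in the Step 1 master equation evaluated at the tuple $(\CY_{i_j},\dots,\CY_{i_2},\id)$. Invoking the master equations then gives the required vanishing, provided the $\partial_\Ac$-terms match the differential that appears implicitly inside each $\widehat{\mfm}_1$ acting on the coefficient words.

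\emph{Main obstacle.} The essential difficulty is the combinatorial bookkeeping: one must match, position by position and decomposition by decomposition, the two index summations in \eqref{prod_inf} and \eqref{fun_inf} with the types of building degenerations classified in Step 1. In particular I will need to check that (a) the $\partial_\Ac$-contributions from pure-chord breakings correspond precisely to $\widehat{\mfm}_1$ acting on the coefficient words $\bs{a}_i$ inside the cyclic generators; (b) the asymmetric placement of $\id$ in the rightmost tensor slot of \eqref{def:mfm} and \eqref{def:CY} is compatible with the $n=0$ versus $n>0$ dichotomy of \eqref{prod_inf}; (c) the global prohibition of connecting chords equal to a maximum Morse chord $x_{jk}$ (cf.\ Section \ref{sec:product}) is preserved throughout every application of the master equations, so that no spurious $\ba^x$-type contribution appears where an $\ba^\vee$- or $\bs{\D}$-type term is expected. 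Once these compatibilities are carefully verified, the two identities \eqref{prod_inf} and \eqref{fun_inf} drop out simultaneously from the same input: the Step 1 master equations for $\ba^x+\bs{\D}$ and for $\ba^\vee$ respectively.
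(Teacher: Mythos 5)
Your plan follows essentially the same route as the paper: induction on $k$ with base cases $k=1,2$ given by Lemma \ref{lem:CY1} and Theorem \ref{teo:CY2}, expansion of the recursive formulas \eqref{def:mfm}--\eqref{def:CY}, and reduction to the $A_\infty$-type master equations for $\ba_d$ and $\bs{\D}_d$, which the paper imports from \cite{L2} as Lemma \ref{lem:baD} rather than re-deriving. The compatibility checks you flag as the main obstacle --- in particular that the extra $\ba^x$- and $\bs{\D}$-contributions needed to complete the various $\ba^\vee$'s to the full $\ba+\bs{\D}$ (so that Lemma \ref{lem:baD} applies) vanish for energy and index reasons involving the Morse chords --- are exactly the points the paper verifies term by term at the end of its proof, so the proposal is on target.
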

To simplify notations, in the following we will assume that the $(k+1)$-tuple of integers $(s_0,\dots,s_k)$ is $(0,\dots,k)$.
In order to prove Theorem \ref{teo:inf} we will use results proved in \cite{L2} about the maps $\ba_d$ and $\bs{\D}_d$ that we recall now.
\begin{lem}\cite[Lemma 3 and Lemma 5]{L2}\label{lem:baD}
	Let $\La_0\cup\dots\cup\La_d$ be a $(d+1)$-copy of $\La_0$. Then for any $1\leq k\leq d$ we have:
	\begin{enumerate}
		\item $\sum\limits_{m=1}^k\sum\limits_{n=0}^{k-m}\bs{\mathrm{b}}_{k-m+1}\big(\id^{\otimes d-m-n}\otimes(\bs{\mathrm{b}}_m+\bs{\D}_m)\otimes\id^{\otimes n}\big)=0$
		\item $\sum\limits_{m=1}^k\sum\limits_{n=0}^{k-m}\bs{\D}_{k-m+1}\big(\id^{\otimes d-m-n}\otimes(\bs{\mathrm{b}}_m+\bs{\D}_m)\otimes\id^{\otimes n}\big)=0$
		\end{enumerate}
\end{lem}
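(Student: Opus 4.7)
The plan is to prove the two identities simultaneously by establishing the full $A_\infty$-relation for the combined maps $\mu_d:=\ba_d+\bs{\D}_d$ on the Rabinowitz cyclic complex $\RFC^{cyc}(\La_\cdot,\La_\cdot)=C_+^{cyc}\oplus C_-^{cyc}$, and then projecting onto each summand of the output. Since, by construction, $\ba_{k-m+1}$ lands in $C_+^{cyc}$ and $\bs{\D}_{k-m+1}$ lands in $C_-^{cyc}$, the single relation
\[
\sum_{m=1}^k\sum_{n=0}^{k-m}\mu_{k-m+1}\bigl(\id^{\otimes k-m-n}\otimes\mu_m\otimes\id^{\otimes n}\bigr)=0
\]
splits into exactly identities (1) and (2) upon projecting to $C_+^{cyc}$ and $C_-^{cyc}$ respectively. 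In each identity the inner block stays as the full $\ba_m+\bs{\D}_m$, because its output feeds into the outer block as an input that may be of either positive- or negative-asymptotic type, and the outer block accepts both.

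To establish the $A_\infty$-relation for $\mu_d$, the plan is to analyze the Gromov--SFT compactification of the $1$-dimensional moduli spaces $\cM^1_{\La_{0\dots k}}(\gamma_\star;\bd_0,c_0,\bd_1,\dots,c_{k-1},\bd_k)$, where $\gamma_\star$ ranges over the positive output $\gamma_{0k}$ and the negative output $\gamma_{k0}$, the $c_i$ are mixed chord asymptotics (positive or negative according to their direction, per the remark following Section \ref{sec:moduli}), and the $\bd_i$ are words of pure Reeb chords. By the usual SFT compactness, the boundary consists of $2$-level pseudo-holomorphic buildings joined by a Reeb chord $\delta$ which is a positive asymptotic of the lower level and a negative asymptotic of the upper level.

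When $\delta$ is a pure chord of some $\La_i$, the lower level is a disc in $\R\times\La_i$ contributing to $\partial_{\Ac(\La_i)}(\delta)$; summing over all such breakings reproduces exactly the internal bar differential on the pure chord words $\bs{a}_i$ carried by the cyclic inputs, i.e.\ the $m=1$ piece of $\mu_1$ acting on the pure-word factor of a cyclic generator $c_i\bs{a}_i$. When $\delta$ is mixed, the lower level is a disc with $\delta$ and a contiguous block of mixed inputs $c_n,\dots,c_{n+m-1}$ (interleaved with sub-words of the $\bd_i$) as its mixed asymptotics, and hence contributes to $\mu_m(c_{n+m-1}\bs{a}_{n+m-1},\dots,c_n\bs{a}_n)$; whether it is the $\ba_m$ or $\bs{\D}_m$ component is dictated by the direction of $\delta$ relative to the ordered $(m+1)$-tuple of Legendrians seen by the lower level. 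The upper level then feeds this output into the slot previously occupied by the degenerated block and contributes to $\mu_{k-m+1}$.

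The main obstacle is the combinatorial bookkeeping: one must verify that every $2$-level building is accounted for exactly once by a term $\mu_{k-m+1}\bigl(\id^{\otimes}\otimes\mu_m\otimes\id^{\otimes}\bigr)$, matching correctly which mixed chords appear as positive vs.\ negative asymptotics on each level and how the pure chord words $\bd_j$ redistribute between the two levels. The $\Z_2$-coefficient version of this analysis is carried out in \cite[Lemma~3, Lemma~5]{L2}; in the present setting the only modification is that pure chord words are retained as coefficients in $\Ac$ instead of being evaluated by augmentations, so the geometric structure of the degenerations is unchanged. It thus only remains to check that the pure-chord breakings on a $\bd_i$ combine with the bar differential action on the $\bs{a}_i$ according to the Leibniz-type rule already used in the verification of $\widehat{\mfm}_1^2=\widecheck{\mfm}_1^2=0$, which is the step where keeping coefficients rather than augmenting requires slightly more care than in \cite{L2}.
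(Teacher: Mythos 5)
The paper does not prove this lemma at all---it is imported verbatim from \cite[Lemmas 3 and 5]{L2}---and your sketch (SFT compactness of the $1$-dimensional moduli spaces, followed by projecting the combined $A_\infty$-relation for $\ba_d+\bs{\D}_d$ onto the two summands of $\RFC^{cyc}$, deferring the combinatorics to \cite{L2} and noting that retaining pure chords as coefficients instead of augmenting changes nothing geometric) is exactly the argument the paper is relying on. The only slip is that you have the targets reversed: $\ba_d$ outputs a chord $\gamma_{0d}$ that is a \emph{positive} puncture of the disc, hence a generator of $C_-^{cyc}$, while $\bs{\D}_d$ outputs a negative puncture, hence lands in $C_+^{cyc}$; this swaps which projection yields identity (1) versus (2) but does not affect the validity of the splitting.
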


\begin{proof}[Proof of Theorem \ref{teo:inf}]
In order to avoid any useless complicated notations and use Lemma \ref{lem:baD} as it is, we prove the theorem ignoring the grading of maps. This means that we will write $\ba_1^\vee$ (restricted to the appropriate module) for $\CY_1$ and $\bs{\D}_1+\ba_1^x$ for $\widehat{\mfm}_1$.
We start by proving \eqref{fun_inf} which is a proof by induction. For $k=1,2$ we have already shown the relation in Lemma \ref{lem:CY1} and the proof of Theorem \ref{teo:CY2} respectively. Now let us prove \eqref{fun_inf} for $k\geq3$ inputs, assuming that the relation holds for any number of inputs less or equal to $k-1$. We denote $\text{LHS}\eqref{fun_inf}$ the left hand side of the equation \eqref{fun_inf}. Using the fact that $\widecheck{\mfm}_r=\ba_r^\vee$ and the formula for the Calabi-Yau map, we have:
\begin{alignat*}{1}
	\text{LHS}\eqref{fun_inf}&=\sum_{j=1}^k\sum_{\substack{t_1,\dots,t_j\geq1\\t_1+\dots+t_j=k}}\ba_j^\vee\big(\CY_{t_j}\otimes\dots\otimes\CY_{t_1}\big)+\ba_1^\vee\circ\widehat{\mfm}_k\\
	&+\sum_{m=1}^{k-1}\sum_{n=0}^{k-m}\sum_{j=2}^{k-m+1}\sum_{\substack{i_2,\dots,i_j\geq1\\i_2+\dots+i_j=k-m}}\ba_j^\vee\Big(\CY_{i_j}\otimes\dots\otimes\CY_{i_2}\otimes\id\Big)\big(\id^{\otimes k-m-n}\otimes\widehat{\mfm}_m\otimes\id^{\otimes n}\big)
\end{alignat*}
We separate the case $n=0$ from the others in the second line and get:
\small
\begin{alignat*}{1}
	&\sum_{j=1}^k\sum_{\substack{t_1,\dots,t_j\geq1\\t_1+\dots+t_j=k}}\ba_j^\vee\big(\CY_{t_j}\otimes\dots\otimes\CY_{t_1}\big)+\ba_1^\vee\circ\widehat{\mfm}_k+\sum_{m=1}^{k-1}\sum_{j=2}^{k-m+1}\sum_{\substack{i_2,\dots,i_j\geq1\\i_2+\dots+i_j=k-m}}\ba_j^\vee\Big(\CY_{i_j}\otimes\dots\otimes\CY_{i_2}\otimes\widehat{\mfm}_m\Big)\\
	&+\sum_{m=1}^{k-1}\sum_{n=1}^{k-m}\sum_{j=2}^{k-m+1}\sum_{\substack{i_2,\dots,i_j\geq1\\i_2+\dots+i_j=k-m}}\ba_j^\vee\Big(\CY_{i_j}\otimes\dots\otimes\CY_{i_2}\otimes\id\Big)\big(\id^{\otimes k-m-n}\otimes\widehat{\mfm}_m\otimes\id^{\otimes n}\big)
\end{alignat*}
\normalsize
We now separate the cases $t_1=1$ and $m=1$ from the others in the first line, and apply a change of variables in the second line (note that for any $2\leq s\leq j$, when $1+\sum\limits_{v=2}^{s-1}i_v\leq n\leq 1+\sum\limits_{v=2}^{s}i_v$, the ``inner'' $\widehat{\mfm}_m$ will be an argument of $\CY_{i_s}$, thus instead of summing over $1\leq n\leq k-m$ we can sum over the variables $s$, $n$ with $2\leq s\leq j$ and $0\leq n\leq i_s-1$) to obtain:
\small
\begin{alignat*}{1}
	&\sum_{j=2}^k\sum_{\substack{t_2,\dots,t_j\geq1\\t_2+\dots+t_j=k-1}}\ba_j^\vee\big(\CY_{t_j}\otimes\dots\otimes\CY_{t_2}\otimes\CY_1\big)+\ba_1^\vee\circ\widehat{\mfm}_k+\sum_{j=2}^{k}\sum_{\substack{i_2,\dots,i_j\geq1\\i_2+\dots+i_j=k-1}}\ba_j^\vee\Big(\CY_{i_j}\otimes\dots\otimes\CY_{i_2}\otimes\widehat{\mfm}_1\Big)\\
	&\sum_{j=1}^k\sum_{\substack{t_1,\dots,t_j\geq1\\t_1\geq2\\t_1+\dots+t_j=k}}\ba_j^\vee\big(\CY_{t_j}\otimes\dots\otimes\CY_{t_1}\big)+\sum_{m=2}^{k-1}\sum_{r=2}^{k-m+1}\sum_{\substack{i_2,\dots,i_r\geq1\\i_2+\dots+i_r=k-m}}\ba_r^\vee\Big(\CY_{i_r}\otimes\dots\otimes\CY_{i_2}\otimes\widehat{\mfm}_m\Big)\\
	&+\sum_{j=2}^{k}\sum_{\substack{i_2,\dots,i_j\geq1\\i_2+\dots+i_j=k-1}}\sum_{s=2}^j\sum_{m=1}^{i_s}\sum_{n=0}^{i_s-m}\ba_j^\vee\Big(\CY_{i_j}\otimes\dots\otimes\CY_{i_s-m+1}\big(\id^{\otimes i_s-m-n}\otimes\widehat{\mfm}_m\otimes\id^{\otimes n}\big)\otimes\dots\otimes\CY_{i_2}\otimes\id\Big)
\end{alignat*}
\normalsize
On the first line the summations are on the same sets of parameters, so we combine them together using that $\CY_1+\widehat{\mfm}_1=\ba_1^\vee+\bs{\D}_1+\ba_1^x=\ba_1+\bs{\D}$. Then we rewrite $\CY_{t_1}$ and $\widehat{\mfm}_m$ on the second line, as well as $\widehat{\mfm}_k$ on the first line, using Formulas \eqref{def:mfm} and \eqref{def:CY}. Finally, we use the relation \eqref{fun_inf} in the third line, which is assumed to hold by induction hypothesis. After all these changes we get:
\begin{alignat*}{1}
	&\sum_{j=2}^k\sum_{\substack{t_2,\dots,t_j\geq1\\t_2+\dots+t_j=k-1}}\ba_j^\vee\Big(\CY_{t_j}\otimes\dots\otimes\CY_{t_2}\otimes\big(\ba_1+\bs{\D}_1\big)\Big)\\
	&+\sum_{r=2}^k\sum_{\substack{t_2,\dots,t_r\geq1\\t_2+\dots+t_r=k-1}}\ba_1^\vee\circ\big(\ba_r^x+\bs{\D}_r\big)\Big(\CY_{t_r}\otimes\dots\otimes\CY_{t_2}\otimes\id\Big)\\
	&+\sum_{j=1}^k\sum_{\substack{t_1,\dots,t_j\geq1\\t_1\geq2\\t_1+\dots+t_j=k}}\sum_{r=2}^{t_1}\sum_{\substack{i_2,\dots,i_r\geq1\\i_2+\dots+i_r=t_1-1}}\ba_j^\vee\Big(\CY_{t_j}\otimes\dots\otimes\CY_{t_2}\otimes\ba_r^\vee\big(\CY_{i_r}\otimes\dots\otimes\CY_{i_2}\otimes\id\big)\Big)\\
	&+\sum_{m=2}^{k-1}\sum_{j=2}^{k-m+1}\sum_{\substack{i_2,\dots,i_j\geq1\\i_2+\dots+i_j=k-m}}\sum_{r=2}^m\sum_{\substack{t_2,\dots,t_r\geq1\\t_2+\dots+t_r=m-1}}\ba_j^\vee\Big(\CY_{i_j}\otimes\dots\otimes\CY_{i_2}\otimes\big(\ba_r^x+\bs{\D}_r\big)\big(\CY_{t_r}\otimes\dots\otimes\CY_{t_2}\otimes\id\big)\Big)\\
	&+\sum_{j=2}^{k}\sum_{\substack{i_2,\dots,i_j\geq1\\i_2+\dots+i_j=k-1}}\sum_{s=2}^j\sum_{r=1}^{i_s}\sum_{\substack{t_1,\dots,t_r\geq1\\t_1+\dots+t_r=i_s}}\ba_j^\vee\Big(\CY_{i_j}\otimes\dots\otimes\ba_r^\vee\big(\CY_{t_r}\otimes\dots\otimes\CY_{t_1}\big)\otimes\CY_{i_{s-1}}\otimes\dots\otimes\CY_{i_2}\otimes\id\Big)
\end{alignat*}
We can sum the three middle lines together, and change variables on the resulting sum as well as on the last line to get:
\begin{alignat*}{1}
	&\sum_{j=2}^k\sum_{\substack{t_2,\dots,t_j\geq1\\t_2+\dots+t_j=k-1}}\ba_j^\vee\Big(\CY_{t_j}\otimes\dots\otimes\CY_{t_2}\otimes\big(\ba_1+\bs{\D}_1\big)\Big)\\
	&+\sum_{j=2}^k\sum_{\substack{t_2,\dots,t_j\geq1\\t_2+\dots+t_j=k-1}}\sum_{r=2}^{j}\ba_{j-r+1}^\vee\Big(\CY_{t_j}\otimes\dots\otimes\CY_{t_{r+1}}\otimes\big(\ba_r+\bs{\D}_r\big)\big(\CY_{t_r}\otimes\dots\otimes\CY_{t_2}\otimes\id\big)\Big)\\
	&+\sum_{j=2}^{k}\sum_{\substack{i_2,\dots,i_j\geq1\\i_2+\dots+i_j=k-1}}\sum_{r=1}^{j-1}\sum_{n=1}^{j-r}\ba_{j-r+1}^\vee\Big(\CY_{i_j}\otimes\dots\otimes\ba_r^\vee\big(\CY_{i_{n+r}}\otimes\dots\otimes\CY_{i_{n+1}}\big)\otimes\CY_{i_n}\otimes\dots\otimes\CY_{i_2}\otimes\id\Big)
\end{alignat*}
Finally, note that adding $\ba_r^x+\bs{\D}_r$ to $\ba_r^\vee$ on the last line doesn't change anything because these terms vanish for degree and energy reasons (observe that curves contributing to $\ba_r^x\big(\CY_{i_{n+r}}\otimes\dots\otimes\CY_{i_{n+1}}\big)$ would have a unique positive asymptotic at a maximum Morse chord and negative asymptotic which have to be Morse chords for action reasons. These negative asymptotics are in the image of the $\CY$ map so can only be minimum Morse chords. For index reasons such rigid discs do not exist). Then we observe that the first line is the case $r=1,n=0$ of the third line while the second line is the case $r\geq1, n=0$ of the third line, so we have
\begin{alignat*}{1}
	\text{LHS}\eqref{fun_inf}=\sum_{j=2}^{k}\sum_{\substack{i_2,\dots,i_j\geq1\\i_2+\dots+i_j=k-1}}\sum_{r=1}^{j}\sum_{n=0}^{j-r}\ba_{j-r+1}^\vee\Big(\id^{\otimes j-r-n}\otimes\big(\ba_r+\bs{\D}_r\big)\otimes\id^{\otimes n}\Big)\Big(\CY_{i_j}\otimes\dots\otimes\CY_{i_2}\otimes\id\Big)
\end{alignat*}
which vanishes by Lemma \ref{lem:baD} (1), and thus we have prove the relation \eqref{fun_inf}. In order to prove the relation \eqref{prod_inf}, we will use the relation \eqref{fun_inf}, which simplifies slightly the notations in the computations below. We have:
\begin{alignat*}{1}
	\text{LHS}\eqref{prod_inf}=\widehat{\mfm}_1\circ\widehat{\mfm}_k+\widehat{\mfm}_k\big(\id^{\otimes k-1}\otimes\widehat{\mfm}_1\big)&+\sum_{m=2}^{k-1}\widehat{\mfm}_{k-m+1}\big(\id^{\otimes k-m}\otimes\widehat{\mfm}_m\big)\\
	&+\sum_{m=1}^{k-1}\sum_{n=1}^{k-m}\widehat{\mfm}_{k-m+1}\big(\id^{\otimes k-m-n}\otimes\widehat{\mfm}_m\otimes\id^{\otimes n}\big)
\end{alignat*}
and we use Formula \eqref{def:mfm} to rewrite $\widehat{\mfm}_j$ where it appear, except the ``inner one'' in the last line. We get:
\small
\begin{alignat*}{1}
	&\text{LHS}\eqref{prod_inf}=\sum_{j=2}^k\sum_{\substack{i_2,\dots,i_j\geq1\\i_2+\dots+i_j=k-1}}\big(\ba_1^x+\bs{\D}_1\big)\big(\ba_j^x+\bs{\D}_j\big)\Big(\CY_{i_j}\otimes\dots\otimes\CY_{i_2}\otimes\id\Big)\\
	&+\sum_{j=2}^k\sum_{\substack{i_2,\dots,i_j\geq1\\i_2+\dots+i_j=k-1}}\big(\ba_j^x+\bs{\D}_j\big)\Big(\CY_{i_j}\otimes\dots\otimes\CY_{i_2}\otimes\big(\ba_1^x+\bs{\D}_1\big)\Big)\\
	&+\sum_{m=2}^{k-1}\sum_{j=2}^{k-m+1}\sum_{\substack{i_2,\dots,i_j\geq1\\i_2+\dots+i_j=k-m}}\sum_{r=2}^m\sum_{\substack{t_2,\dots,t_r\geq1\\t_2+\dots+t_r=m-1}}\big(\ba_j^x+\bs{\D}_j\big)\Big(\CY_{i_j}\otimes\dots\otimes\CY_{i_2}\otimes\big(\ba_r^x+\bs{\D}_r\big)\Big(\CY_{t_r}\otimes\dots\otimes\CY_{t_2}\otimes\id\Big)\Big)\\
	&+\sum_{m=1}^{k-1}\sum_{n=1}^{k-m}\sum_{j=2}^{k-m+1}\sum_{\substack{i_2,\dots,i_j\geq1\\i_2+\dots+i_j=k-m}}\big(\ba_j^x+\bs{\D}_j\big)\Big(\CY_{i_j}\otimes\dots\otimes\CY_{i_2}\otimes\id\Big)\big(\id^{\otimes k-m-n}\otimes\widehat{\mfm}_m\otimes\id^{\otimes n}\big)
\end{alignat*}
\normalsize
Observe that the last line can be written:
\small
\begin{alignat*}{1}
	&\sum_{m=1}^{k-1}\sum_{j=2}^{k-m+1}\sum_{\substack{i_2,\dots,i_j\geq1\\i_2+\dots+i_j=k-m}}\sum_{s=2}^j\sum_{n=0}^{i_s}\big(\ba_j^x+\bs{\D}_j\big)\Big(\CY_{i_j}\otimes\dots\otimes\CY_{i_s}\big(\id^{\otimes i_s-1-n}\otimes\widehat{\mfm}_m\otimes\id^{\otimes n}\big)\otimes\dots\otimes\CY_{i_2}\otimes\id\Big)\\
	&=\sum_{j=2}^{k}\sum_{\substack{i_2,\dots,i_j\geq1\\i_2+\dots+i_j=k-1}}\sum_{s=2}^j\sum_{m=1}^{i_s}\sum_{n=0}^{i_s-m}\big(\ba_j^x+\bs{\D}_j\big)\Big(\CY_{i_j}\otimes\dots\otimes\CY_{i_s-m+1}\big(\id^{\otimes i_s-m-n}\otimes\widehat{\mfm}_m\otimes\id^{\otimes n}\big)\otimes\dots\otimes\CY_{i_2}\otimes\id\Big)
\end{alignat*}
\normalsize
where the right hand side of the equality is obtained after a change of variables. On this last line we can now apply the relation \eqref{fun_inf} to get:
\small
\begin{alignat*}{1}
	&\sum_{j=2}^{k}\sum_{\substack{i_2,\dots,i_j\geq1\\i_2+\dots+i_j=k-1}}\sum_{s=2}^j\sum_{r=1}^{i_s}\sum_{\substack{t_1,\dots,t_r\geq1\\t_1+\dots+t_r=i_s}}\big(\ba_j^x+\bs{\D}_j\big)\Big(\CY_{i_j}\otimes\dots\otimes\CY_{i_{s+1}}\otimes\widecheck{\mfm}_r\big(\CY_{t_r}\otimes\dots\otimes\CY_{t_1}\big)\otimes\dots\otimes\CY_{i_2}\otimes\id\Big)
\end{alignat*}
\normalsize
Using that $\widecheck{\mfm}_r=\ba_r^\vee$, we have:
\small
\begin{alignat*}{1}
	&\text{LHS}\eqref{prod_inf}=\sum_{j=2}^k\sum_{\substack{i_2,\dots,i_j\geq1\\i_2+\dots+i_j=k-1}}\big(\ba_1^x+\bs{\D}_1\big)\big(\ba_j^x+\bs{\D}_j\big)\Big(\CY_{i_j}\otimes\dots\otimes\CY_{i_2}\otimes\id\Big)\\
	&+\sum_{j=2}^k\sum_{\substack{i_2,\dots,i_j\geq1\\i_2+\dots+i_j=k-1}}\big(\ba_j^x+\bs{\D}_j\big)\Big(\CY_{i_j}\otimes\dots\otimes\CY_{i_2}\otimes\big(\ba_1^x+\bs{\D}_1\big)\Big)\\
	&+\sum_{m=2}^{k-1}\sum_{j=2}^{k-m+1}\sum_{\substack{i_2,\dots,i_j\geq1\\i_2+\dots+i_j=k-m}}\sum_{r=2}^m\sum_{\substack{t_2,\dots,t_r\geq1\\t_2+\dots+t_r=m-1}}\big(\ba_j^x+\bs{\D}_j\big)\Big(\CY_{i_j}\otimes\dots\otimes\CY_{i_2}\otimes\big(\ba_r^x+\bs{\D}_r\big)\Big(\CY_{t_r}\otimes\dots\otimes\CY_{t_2}\otimes\id\Big)\Big)\\
	&\sum_{j=2}^{k}\sum_{\substack{i_2,\dots,i_j\geq1\\i_2+\dots+i_j=k-1}}\sum_{s=2}^j\sum_{r=1}^{i_s}\sum_{\substack{t_1,\dots,t_r\geq1\\t_1+\dots+t_r=i_s}}\big(\ba_j^x+\bs{\D}_j\big)\Big(\CY_{i_j}\otimes\dots\otimes\CY_{i_{s+1}}\otimes\ba_r^\vee\big(\CY_{t_r}\otimes\dots\otimes\CY_{t_1}\big)\otimes\dots\otimes\CY_{i_2}\otimes\id\Big)
\end{alignat*}
\normalsize
By a change of variables in the third and fourth lines we obtain:
\small
\begin{alignat*}{1}
	&\text{LHS}\eqref{prod_inf}=\sum_{j=2}^k\sum_{\substack{i_2,\dots,i_j\geq1\\i_2+\dots+i_j=k-1}}\big(\ba_1^x+\bs{\D}_1\big)\big(\ba_j^x+\bs{\D}_j\big)\Big(\CY_{i_j}\otimes\dots\otimes\CY_{i_2}\otimes\id\Big)\\
	&+\sum_{j=2}^k\sum_{\substack{i_2,\dots,i_j\geq1\\i_2+\dots+i_j=k-1}}\big(\ba_j^x+\bs{\D}_j\big)\Big(\CY_{i_j}\otimes\dots\otimes\CY_{i_2}\otimes\big(\ba_1^x+\bs{\D}_1\big)\Big)\\
	&+\sum_{j=2}^{k-1}\sum_{\substack{i_2,\dots,i_j\geq1\\i_2+\dots+i_j=k-1}}\sum_{r=2}^{j-1}\big(\ba_{j-r+1}^x+\bs{\D}_{j-r+1}\big)\Big(\CY_{i_j}\otimes\dots\otimes\CY_{i_{r+1}}\otimes\big(\ba_r^x+\bs{\D}_r\big)\Big(\CY_{i_r}\otimes\dots\otimes\CY_{i_2}\otimes\id\Big)\Big)\\
	&\sum_{j=2}^{k}\sum_{\substack{i_2,\dots,i_j\geq1\\i_2+\dots+i_j=k-1}}\sum_{r=1}^{j-1}\sum_{n=1}^{j-r}\big(\ba_{j-r+1}^x+\bs{\D}_{j-r+1}\big)\Big(\CY_{i_j}\otimes\dots\otimes\ba_r^\vee\big(\CY_{i_{n+r}}\otimes\dots\otimes\CY_{i_{n+1}}\big)\otimes\CY_{i_n}\otimes\dots\otimes\CY_{i_2}\otimes\id\Big)
\end{alignat*}
\normalsize
Finally, observe that adding 
\begin{itemize}
	\item $\ba_j^\vee$ to $\ba_j^x+\bs{\D}_j$ on the first line does nothing as it vanishes for energy reasons (when the output is a long chord) or by cancelling pairs of discs (when the output of $\ba_j^\vee$ is $y$ and the dimension of the Legendrian is $1$).
	\item $\ba_1^\vee$ to $\ba_1^x+\bs{\D}_1$ on the second line contributes nothing more when the output of $\ba_1^\vee$ is a long chord (for energy reasons). Let us check that it also vanishes when the output of $\ba_1^\vee$ is the Morse chord $y$. In such a case, energy arguments imply that the terms
	\begin{alignat*}{1}
		\bs{\D}_j\Big(\CY_{i_j}\otimes\dots\otimes\CY_{i_2}\otimes\ba_1^y\Big)
	\end{alignat*}
vanish. Then we consider the terms:
	\begin{alignat*}{1}
		\ba_j^x\Big(\CY_{i_j}\otimes\dots\otimes\CY_{i_2}\otimes\ba_1^y\Big)
	\end{alignat*}
Pseudo-holomorphic buildings contributing to such terms should contain a rigid disc with a unique positive asymptotic to $x$. For energy reasons, the $j$ negative asymptotics must be Morse chords and in particular can only be $y$'s (by definition of the $\CY$ maps). For index reasons such a rigid disc doesn't exist.
	\item $\ba_r^\vee$ to $\ba_r^x+\bs{\D}_r$ on the third line doesn't contribute either for the same reasons as the previous point.
	\item $\ba_r^x+\bs{\D}_r$ to $\ba_r^\vee$ on the fourth line finally does not change anything either. Indeed, $\bs{\D}_r\big(\CY_{i_{n+r}}\otimes\dots\otimes\CY_{i_{n+1}}\big)$ vanishes for energy reasons and then as before the term $\ba_r^x\big(\CY_{i_{n+r}}\otimes\dots\otimes\CY_{i_{n+1}}\big)$  vanishes also because there is an even number of discs, or no disc at all, with positive asymptotic at $x$ and negative asymptotics only at $y$'s chords. 
\end{itemize} 
Moreover, note that the first line is the case $ r=j, n=0$ of the last line, the second line is the case $r=1, n=0$ of the last line, and the third line is the case $2\leq r\leq j-1, n=0$ of the last line. So we have
\small
\begin{alignat*}{1}
	&\text{LHS}\eqref{prod_inf}=\\
	&\sum_{j=2}^{k}\sum_{\substack{i_2,\dots,i_j\geq1\\i_2+\dots+i_j=k-1}}\sum_{r=1}^{j}\sum_{n=0}^{j-r}\big(\ba_{j-r+1}^x+\bs{\D}_{j-r+1}\big)\Big(\id^{\otimes j-r-n}\otimes\big(\ba_r+\bs{\D}_r\big)\otimes\id^{\otimes n}\Big)\Big(\CY_{i_j}\otimes\dots\otimes\CY_{i_2}\otimes\id\Big)
\end{alignat*}
\normalsize
which vanishes by Lemma \ref{lem:baD} (1) and (2).
\end{proof}

\section{Example: the unknot}
%%%%%%%%%%%%%%%%%%%%%%%%%%%%

%\subsection{The unknot}
%%%%%%%%%%%%%%%%%%%%%%%%%%%%%
The computation done here is a sub-case of the computation done in \cite[Section 5]{BEE:product}. We nevertheless detail it using our notations.
Let $\La$ be the standard $TB=-1$ unknot and consider a $2$-copy and a $3$-copy, see Figure \ref{fig:unknot12}. We have
\begin{alignat*}{1}
	&\widehat{C}^{cyc}_+(\La_0,\La_1)=\langle a_{10},x_{01}\rangle_{\Ac-\Ac}^{cyc}\quad\text{and}\quad\widecheck{C}_-^{cyc}(\La_0,\La_1)=\langle a_{01},y_{01}\rangle_{\Ac-\Ac}^{cyc}
\end{alignat*}
with $|a_{10}|_{\widehat{C}_+}=0$, $|x_{01}|_{\widehat{C}_+}=2$, $|a_{01}|_{\widecheck{C}_-}=2$ and $|y_{01}|_{\widecheck{C}_-}=0$. Denote $\bs{a}^j=a\dots a$ the word consisting of $j$ times the chord $a$ which is the only Reeb chord of $\La$. We have for all $j\geq0$ $\widehat{\mfm}_1(a_{10}\bs{a}^j)=x_{01}a\bs{a}^j+x_{01}\bs{a}^ja=0$, and $\widehat{\mfm}_1(x_{01})=0$ as well. On the other side $\widecheck{\mfm}_1(a_{01}\bs{a}^j)=0$, for example for degree reasons, and $\widecheck{\mfm}_1(y_{01}\bs{a}^j)=a_{01}a\bs{a}^j+a_{01}\bs{a}^ja=0$.
So both differentials $\widehat{\mfm}_1$ and $\widecheck{\mfm}_1$ vanish implying that the homologies of $\big(\widehat{C}^{cyc}_+(\La_0,\La_1),\widehat{\mfm}_1\big)$ and $\big(\widecheck{C}^{cyc}_-(\La_0,\La_1),\widecheck{\mfm}_1\big)$ are infinite dimensional generated by all words $a_{10}\bs{a}^j, x_{01}\bs{a}^j$ and $a_{01}\bs{a}^j,y_{01}\bs{a}^j$ respectively. In this simple case there is a unique way to define the Calabi-Yau map by degree reasons, but one can also easily see on the figure which are the bananas with a positive asymptotic at $a_{10}$ and the strips with a negative asymptotic at $x_{01}$. This gives:
\begin{alignat*}{1}
	\CY_1(a_{10}\bs{a}^j)=y_{01}\bs{a}^j\quad\text{and}\quad\CY_1(x_{01}\bs{a}^j)=a_{01}\bs{a}^j
\end{alignat*}
We can then use the Lagrangian projection of the $3$-copy to compute the product structures on $\widehat{C}_+^{cyc}$ and $\widecheck{C}_-^{cyc}$.
\begin{figure}[ht]  
	\labellist
	\footnotesize
	\pinlabel $x_{01}$ at 191 102
	\pinlabel $y_{01}$ at 210 93
	\pinlabel $a_{01}$ at 120 35
	\pinlabel $a_{10}$ at 118 80
	\endlabellist
	\normalsize
	\begin{center}\includegraphics[width=12cm]{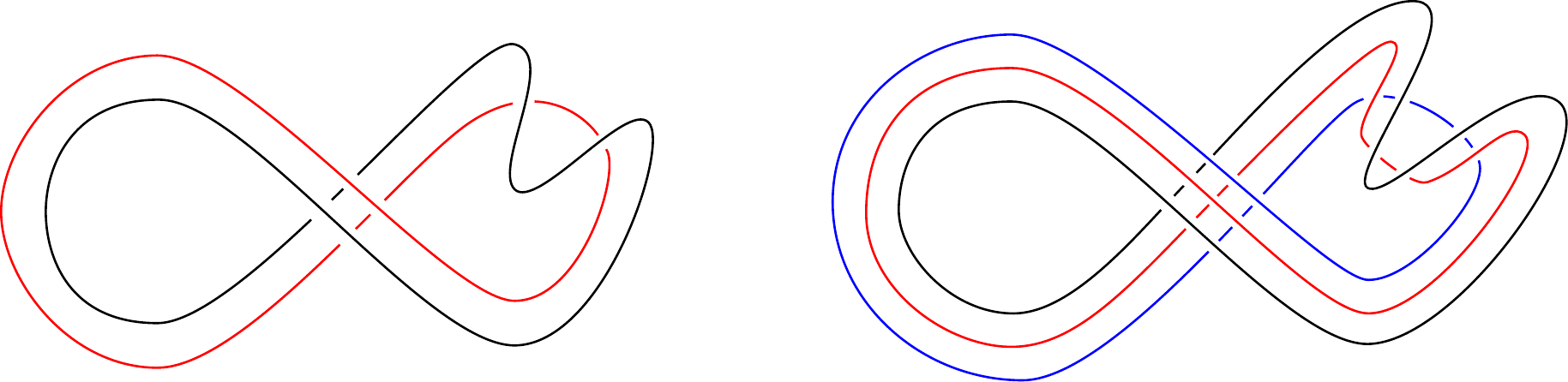}\end{center}
	\caption{Lagrangian projections of the $2$-copy $\La_0\cup\La_1$ on the left, and of the $3$-copy $\La_0\cup\La_1\cup\La_2$ on the right.}
	\label{fig:unknot12}
\end{figure}
The rigid discs asymptotic to generators of $\widecheck{C}_-^{cyc}$ with one positive asymptotic and two negative asymptotics contribute to the product $\widecheck{\mfm}_2$ and one can see that the only one are those giving 
\begin{alignat*}{1}
	\widecheck{\mfm}_2(a_{12}\bs{a}^j,y_{01}\bs{a}^i)=a_{02}\bs{a}^{i+j},\quad\widecheck{\mfm}_2(y_{12}\bs{a}^j,a_{01}\bs{a}^i)=a_{02}\bs{a}^{i+j}\quad\text{and}\quad\widecheck{\mfm}_2(y_{12}\bs{a}^j,y_{01}\bs{a}^i)=y_{02}\bs{a}^{i+j}
\end{alignat*}
which expresses the fact that the minimum Morse Reeb chord acts as a unit, i.e. induces a quasi-isomorphism $\widecheck{C}_-^{cyc}(\La_1,\La_2)\cong\widecheck{C}_-^{cyc}(\La_0,\La_2)$.
For the product $\widehat{\mfm}_2$ we have to find buildings as the one pictured in Figure \ref{fig:secondary_prod}. The only one are those we depicted on Figure \ref{fig:sec_prod_unknot}. They give the following non trivial components of the product:
\begin{alignat*}{1}
	\widehat{\mfm}_2(a_{21}\bs{a}^j,a_{10}\bs{a}^i)=a_{20}\bs{a}^{i+j},\quad\widehat{\mfm}_2(a_{21}\bs{a}^j,x_{01}\bs{a}^i)=x_{02}\bs{a}^{i+j}\quad\text{and}\quad\widehat{\mfm}_2(x_{12}\bs{a}^j,a_{10}\bs{a}^i)=x_{02}\bs{a}^{i+j}
\end{alignat*}
which translates the fact that the mixed $a$ generator acts as a unit for the product $\widehat{\mfm}_2$. The higher order operations $\widecheck{\mfm}_j$ and $\widehat{\mfm}_j$ for $j\geq3$ vanish.
\begin{figure}[ht]  
	\labellist
	\footnotesize
	\pinlabel $a_{01}$ at 115 173
	\pinlabel $a_{10}$ at 55 90
	\pinlabel $a_{20}$ at 140 73
	\pinlabel $y_{12}$ at 110 53
	\pinlabel $x_{01}$ at 217 73
	\pinlabel $x_{02}$ at 240 173
	\pinlabel $y_{12}$ at 245 60
	\pinlabel $a_{21}$ at 300 90
	\pinlabel $a_{10}$ at 375 173
	\pinlabel $x_{02}$ at 425 173
	\pinlabel $a_{12}$ at 430 65
	\pinlabel $x_{12}$ at 398 -10
	\endlabellist
	\normalsize
	\begin{center}\includegraphics[width=10cm]{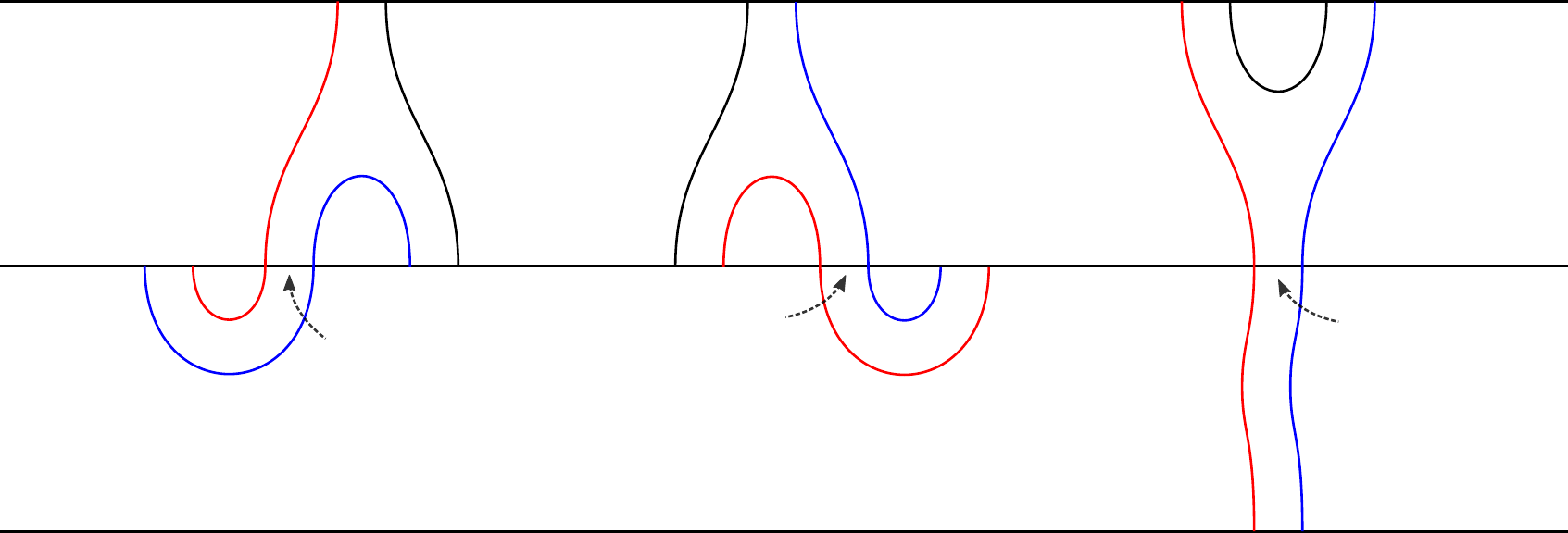}\end{center}
	\caption{Pseudo-holomorphic buildings contributing to the product $\widehat{\mfm}_2$ for the unknot.}
	\label{fig:sec_prod_unknot}
\end{figure}

\bibliographystyle{alpha}
\bibliography{ref.bib}
\end{document}